\newcommand{\di}{\,\mathrm{d}}
\numberwithin{equation}{section}
\long\def\REM#1{\relax}
\def\1#1{\overline{#1}}
\def\2#1{\widetilde{#1}}
\def\3#1{\widehat{#1}}
\def\4#1{\mathbb{#1}}
\def\5#1{\frak{#1}}
\def\6#1{{\mathcal{#1}}}
\newcommand{\UH}{\mathbb{H}}
\newcommand{\R}{\mathbb R}
\newcommand{\C}{\mathbb C}
\newcommand{\Hol}{{\sf Hol}}
\newcommand{\D}{\mathbb D}
\newcommand{\U}{{\mathfrak U}}
\newcommand{\Ut}[1]{{\mathfrak U\hskip.1em}_{#1}\hskip-.09em}
\newcommand{\UF}{{\mathfrak U\hskip.08em}}
\newcommand{\UD}{\mathbb{D}}
\newcommand{\mcite}[1]{\csname b@#1\endcsname}
\newcommand{\UC}{\mathbb{T}}
\theoremstyle{theorem}
\newtheorem {result} {Theorem}
\newcommand{\Real}{\mathbb{R}}
\newcommand{\Natural}{\mathbb{N}}
\newcommand{\Complex}{\mathbb{C}}
\newcommand{\ComplexE}{\overline{\mathbb{C}}}
\def\id{{\sf id}}
\newtheorem{theorem}{Theorem}[section]
\newtheorem{lemma}[theorem]{Lemma}
\newtheorem{proposition}[theorem]{Proposition}
\newtheorem{corollary}[theorem]{Corollary}
\theoremstyle{definition}
\newtheorem{definition}[theorem]{Definition}
\newtheorem{example}[theorem]{Example}
\theoremstyle{remark}
\newtheorem{remark}[theorem]{Remark}
\numberwithin{equation}{section}
\newcommand{\anglim}{\angle\lim}
\def\Re{\mathop{\mathtt{Re\hskip-.07ex}}}
\def\Im{\mathop{\mathtt{Im}}}
\newcommand{\Gen}{\mathsf{Gen}}
\newcommand{\GenDW}{\Gen_\tau}
\newcommand{\GenDWe}{\Gen'_\tau}
\let\le=\leqslant
\let\ge=\geqslant
\let\leq=\leqslant
\def\UC{\partial\UD}
\newcommand{\plw}{^{\textstyle\star}}
\newcommand{\zrw}{^\#}
\newcommand{\interior}{\mathop{\mathsf{int}}}
\newcommand{\extr}{\mathop{\mathsf{extr}}}
\newcommand{\Cara}{\mathcal C}
\newcommand{\Herg}{\mathcal P}
\newcommand{\ind}{\mathds{1}}
\title[Infinitesimal generators  with prescribed boundary fixed points]{Infinitesimal generators of semigroups with prescribed boundary fixed points}
\author[M. D. Contreras]{Manuel D. Contreras}
\author[S. D\'{\i}az-Madrigal]{Santiago D\'{\i}az-Madrigal}
\address{M. D. Contreras, S. D\'{\i}az-Madrigal: Camino de los Descubrimientos, s/n\\
Departamento de Matem\'{a}tica Aplicada~II and IMUS\\ Universidad de Sevilla\\ Sevilla,
41092\\ Spain.}\email{contreras@us.es} \email{madrigal@us.es}
\author[P. Gumenyuk]{Pavel Gumenyuk}\address{Pavel Gumenyuk: Department of
Mathematics\\ Milano Politecnico, via E. Bonardi 9\\ Milan 20133, Italy.}
\email{pavel.gumenyuk@polimi.it}
\date{\today}
\subjclass[2010]{Primary 37C10, 30C35; Secondary 30D05, 30C80, 37F99, 37C25}
\keywords{One-parameter semigroup; fixed point; boundary regular fixed point; infinitesimal generator; critical point; spectral value; value region; extreme point; Krein-Milman Theorem; Cowen-Pommerenke inequalities}
\thanks{Partially supported by the \textit{Ministerio
de Econom\'{\i}a y Competitividad} and the European Union (FEDER) PGC2018-094215-B-100 and by \textit{La Junta de Andaluc\'{\i}a}, FQM-133}
\begin{document}

\begin{abstract}
We study infinitesimal generators of one-parameter semigroups in the unit disk~$\UD$ having prescribed boundary regular fixed points. Using an explicit representation of such infinitesimal generators in combination with Krein\,--\,Milman Theory we obtain new sharp inequalities relating spectral values at the fixed points with other important quantities having dynamical meaning.
We also give a new proof of the classical Cowen\,--\,Pommerenke inequalities for univalent self-maps of~$\UD$.
\end{abstract}

\maketitle

\tableofcontents

\section{Introduction}
One-parameter semigroups of holomorphic self-maps of a hyperbolic simply connected domain in the complex plane~$\C$ constitute a classical topic in Complex Analysis, see e.g. \cite{Abate, BCD-Book, CD, EliShobook10, Shb}. They suite as a natural time-continuous analogue for discrete iteration of holomorphic maps and often appear in applications, for example in Probability Theory, see e.g. \cite{Goryainov-survey, TakahiroSebastian}, \cite[\S10.1]{Kyp}.

Thanks to the Riemann Mapping Theorem, one can restrict attention to one-parameter semigroups in the unit disk~$\UD:=\{z\in\C:|z|<1\}$.
From the dynamical point of view, an important role is played by the boundary fixed points, understood in the sense of angular limits. How presence of \textit{regular} boundary fixed points (see Def.\,\ref{DF_BRFP}) does affect behaviour of the self-map at internal points is the problem studied in many classical and recent works, e.g. \cite{MilnVas, BESh, CowenPommerenke, Frolova, Goryainov-paper, GumProkh} just to mention some. Boundary fixed points of one-parameter semigroups have been also subject to active interest, see e.g. \cite{CoDiPo04, CoDiPo06, Goryainov-Kudryavtseva, Gum14}.
In the present paper we study this problem for one-parameter semigroups with a prescribed finite set of boundary regular fixed points. Our main tools are the representation formula for infinitesimal generators of such one-parameter semigroups originally due to Goryainov~\cite{Goryainov-survey} (see Theorem~\ref{TH_representation-formula}) and Krein\,--\,Milman Theory, which turns out to be very useful in finding value regions of linear functionals on compact convex sets of holomorphic functions.
We obtain several sharp inequalities relating the spectral values at the boundary fixed points and at the Denjoy\,--\,Wolff point with the value of the infinitesimal generator at an internal point or other important dynamical characteristics of the semigroup.

The paper is organized as follows. In the next section, we collect some preliminary material, which we need to state the main results in Sect.\,\ref{S_mainresult}.
Further, in Sect.\,\ref{S_Herglotzfunctions} we obtain several auxiliary results concerning local boundary behaviour of holomorphic functions with non-negative real part.
Necessary results from Krein\,--\,Milman Theory are collected in Sect.\,\ref{S_KreinMilman}.

Sect.\,\ref{S_ValueRegions} contains our main results. First we establish (a more precise version of) Goryainov's representation formula mentioned above, see Sect.\,\ref{SS_representation}. In the next two subsections we state in a full detail our results on sharp value regions for such generators.
In particular, we take advantage of the fact that extreme points of the Carath\'eodory class are well-known, see Remark~\ref{RM_Caratheodory-class}. Going in a bit different direction, in the last part of Sect.\,\ref{S_ValueRegions} we study extremal points of two classes of infinitesimal generators of semigroups with prescribed boundary regular fixed points.

Finally, in Sect.\,\ref{S_param-and-CP-ineq} we give a new proof of the well-known Cowen\,--\,Pommerenke inequalities for univalent (i.e. injective holomorphic) self-maps of~$\UD$.
Three elementary statements used in Sect.\,\ref{S_param-and-CP-ineq} are proved in Appendix.

\section{Preliminaries}
\subsection{Discrete iteration}
For a domain $D\subset\Complex$, we denote by $\Hol(D)$ the linear space formed by all holomorphic functions from $D$ into $\Complex$. As usual, we  endow $\Hol(D)$ with the compact-open topology, which is the same as the topology of locally uniform convergence in~$D$. This turns $\Hol(D)$ into a locally-convex Hausdorff topological linear space.
For a set $E\subset\Complex$, we write $\Hol(D,E):=\{f\in\Hol(D)\colon f(D)\subset E\}$. Of particular interest in this paper, it will be the case when $D$ is the open unit disk $\UD$ and $f\in\Hol(\UD,\UD)$.

Thanks to the Schwarz Lemma, a holomorphic self-map $\varphi:\UD\to\UD$, $\varphi\neq\id_\UD$, can have at most one fixed point in~$\UD$. An important role is therefore played by the so-called boundary fixed points.
\begin{definition}\label{DF_BRFP}
A point~$\sigma\in\partial\UD$ is called a \textit{boundary fixed point} of a holomorphic self-map $\varphi:\UD\to\UD$ if the angular limit $\varphi(\sigma):=\anglim_{z\to\sigma}\varphi(z)$ exists and coincides with~$\sigma$. If in addition, the angular derivative $$\varphi'(\sigma):=\anglim_{z\to\sigma}\frac{\varphi(z)-\varphi(\sigma)}{z-\sigma}$$ exists finitely, then the boundary fixed point~$\sigma$ is said to be \textit{regular}. In what follows,  ``boundary regular fixed point'' will be abbreviated as \textit{``BRFP''}.
\end{definition}
It is known that at any boundary fixed point~$\sigma$ of a holomorphic self-map $\varphi:\UD\to\UD$, the angular derivative $\varphi'(\sigma)$ exists but it can be equal to~$\infty$; if $\varphi'(\sigma)$ is finite, then $\varphi'(\sigma)>0$ and moreover,
\begin{equation}\label{EQ-JuliasLemma}
\sup_{z\in\UD}\frac{1-|z|^2}{|z-\sigma|^2}\frac{|\varphi(z)-\sigma|^2}{1-|\varphi(z)|^2}=\varphi'(\sigma).
\end{equation}
see e.g. \cite[Proposition~4.13 on p.\,82]{Pombook92}. The latter statement is known as the \textit{Julia or Julia\,--\,Wolff Lemma}.

The classical \textit{Denjoy\,--\,Wolff Theorem} 
states that for any $\varphi\in\Hol(\UD,\UD)\setminus\{\id_\UD\}$ one of the two following alternatives holds:
\begin{itemize}
\item[(i)] either $\varphi$ has a unique fixed-point~$\tau\in\UD$, with $|\varphi'(\tau)|\le 1$,
\item[(ii)] or $\varphi$ is fixed-point free in~$\UD$, but it has a unique BRFP~$\tau\in\UC$ satifying~$\varphi'(\tau)\le 1$.
\end{itemize}
In both cases, $\tau$ is called the \textsl{Denjoy\,--\,Wolff point} of~$\varphi$, or in abbreviated form, the \textsl{DW-point}. In case~(i), the self-map $\varphi$ is said to be \textsl{elliptic}. In case~(ii), $\varphi$ is called \textsl{hyperbolic} or \textsl{parabolic} depending on whether ${\varphi'(\tau)<1}$ or ${\varphi'(\tau)=1}$.

For any BRFP $\sigma\neq\tau$,  we have $\varphi'(\sigma)>1$. By this reason, BRFPs different from the DW-point are often called \textsl{repelling}.

For these and more details concerning dynamics of holomorphic self-maps we refer an interested reader to \cite{Abate} and \cite[Sect. 1.8]{BCD-Book}.

\subsection{One-parameter semigroups}
Note that $\Hol(\UD,\UD)$ is a topological semigroup w.r.t. the composition. Continuous semigroup homomorphisms $ t\mapsto\phi_t\in\Hol(\UD,\UD)$  from the semigroup~$\big([0,+\infty),\,+\,\big)$ to $\Hol(\UD,\UD)$ are usually referred to as \textsl{one-parameter semigroups} (of holomorphic functions in~$\UD$).
In other words, a family $(\phi_t)_{t\ge0}\subset\Hol(\UD,\UD)$ is a one-parameter semigroup if $\phi_0=\id_\UD$, $\phi_s\circ\phi_t=\phi_{t+s}$ for any $s,t\ge0$, and $\phi_t(z)\to z$ as $t\to0^+$ for all~${z\in\UD}$. Note that the point-wise convergence leads to continuity in the open-compact topology because $\Hol(\UD,\UD)$ is a normal family.
Functions $\phi_t$ can be regarded as ``fractional iterates'' of~$\varphi:=\phi_1$.

We call a one-parameter semigroup~$(\phi_t)$ \textsl{non-trivial} if there is~$t>0$ such that $\phi_t\neq\id_\UD$. It is well known that for every non-trivial semigroup~$(\phi_t)$ all elements different from~$\id_\UD$ share the same Denjoy\,--\,Wolff point~$\tau$ and moreover, $\phi'_t(\tau)=e^{-\lambda t}$ for all $t\ge0$ and some~$\lambda\in\C$ with $\Re\lambda\ge0$ called the \textsl{spectral value of~$(\phi_t)$ \textsl(at its DW-point)}. In particular, every $\phi_t\neq\id_\UD$ has the same type and hence we can talk about elliptic, hyperbolic, and parabolic one-parameter semigroups.

Note also that if  $(\phi_t)$ is a non-trivial one-paremeter semigroup and $\phi_t=\id_\UD$ for some ${t>0}$, then all $\phi_t$'s are elliptic automorphisms of~$\UD$ and the semigroup~$(\phi_t)$ is just a non-Eucledean rotation around a common fixed-point in~$\UD$. In what follows, we will be mainly concerned with one-parameter semigroups whose elements have boundary fixed points and hence cannot not be elliptic automorphisms.

It turns out that every one-parameter semigroup is the semiflow of some holomorphic vector field in~$\UD$. Denote by $\UH$ the right half-plane ${\{z\colon \Re z>0\}}$. The following classical result is due to Berkson and Porta~\cite{BerPor78}, see also \cite[\S2]{FMS_ICAMI} and \cite[Sect. 10.1]{BCD-Book}.
\begin{result}\label{TH_BP}
Let $(\phi_t)$ be a one-parameter semigroup. Then for any~$z\in\UD$, $t\mapsto\phi_t(z)$ is differentiable, and there exists a unique $G\in\Hol(\UD)$ such that
\begin{equation}\label{EQ_flow}
\frac{\di \phi_t(z)}{\di t}=G(\phi_t(z))\quad \text{for all~$z\in\UD$ and all~$t\ge0$}.
\end{equation}
Moreover, if $(\phi_t)$ is non-trivial, then $G$ can be represented as
\begin{equation}\label{EQ_BP-formula}
G(z)=(\tau-z)(1-\overline\tau z)p(z),\quad z\in\UD,
\end{equation}
where $\tau\in\overline\UD$ is the DW-point of~$(\phi_t)$ and $p\in\Hol(\UD,\overline\UH \setminus\{0\})$.

Conversely, if~$G$ is given by~\eqref{EQ_BP-formula} with some $\tau\in\overline\UD$ and $p\in\Hol(\UD,\overline\UH{\setminus\{0\}})$, then there exists a unique non-trivial one-parameter semigroup~$(\phi_t)$ satisfying the ODE~\eqref{EQ_flow}.
\end{result}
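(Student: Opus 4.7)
The plan is to split the theorem into three parts: (i) existence of the infinitesimal generator $G$ together with the ODE, (ii) the Berkson--Porta representation formula in the non-trivial case, and (iii) the converse direction.

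For (i), the starting observation is that $\Hol(\UD,\UD)$ is a normal family. First I would show that for each $z\in\UD$, the curve $t\mapsto\phi_t(z)$ is locally Lipschitz: using the Schwarz--Pick inequality one bounds $|\phi_{t+h}(z)-\phi_t(z)|$ in terms of the hyperbolic distance between $\phi_h(\phi_t(z))$ and $\phi_t(z)$, which is small by continuity of the semigroup at $0^+$. Normal-family compactness then yields a sequence $t_n\searrow 0$ along which $t_n^{-1}(\phi_{t_n}(z)-z)$ converges locally uniformly in $\UD$ to some $G\in\Hol(\UD)$. The semigroup identity $\phi_{t+h}=\phi_h\circ\phi_t$ gives
\[
\frac{\phi_{t+h}(z)-\phi_t(z)}{h}\;=\;\frac{\phi_h(\phi_t(z))-\phi_t(z)}{h},
\]
so letting $h=t_n\to 0^+$ we obtain right-differentiability of $t\mapsto\phi_t(z)$ with right derivative $G(\phi_t(z))$. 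Continuity of $G\circ\phi_t$ in~$t$ promotes this to two-sided differentiability, and uniqueness of $G$ is automatic from $G(z)=\lim_{t\to 0^+}(\phi_t(z)-z)/t$. A posteriori this also shows the convergence holds along the full parameter $t\to 0^+$, not just along a subsequence.

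For (ii), assume $(\phi_t)$ is non-trivial with common DW-point $\tau\in\overline{\UD}$. The polynomial $(\tau-z)(1-\overline{\tau}z)$ is nowhere zero in~$\UD$, so dividing gives $G(z)=(\tau-z)(1-\overline{\tau}z)p(z)$ with some $p\in\Hol(\UD)$; non-triviality forces $p\not\equiv 0$. To prove $\Re p\ge 0$, I would introduce
\[
H_t(z):=\frac{z-\phi_t(z)}{(\tau-z)(1-\overline{\tau}z)},\qquad t>0,\ z\in\UD,
\]
and verify that $\Re H_t\ge 0$ on $\UD$. When $\tau\in\UD$ this follows from the Schwarz--Pick lemma applied to $\phi_t$ (after a Möbius normalization sending $\tau$ to~$0$, where the inequality reduces to $\Re\bigl((z-\phi_t(z))/z\bigr)\ge 0$, a consequence of $|\phi_t(z)|\le|z|$). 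When $\tau\in\partial\UD$, the Julia--Wolff inequality~\eqref{EQ-JuliasLemma} provides exactly the estimate needed, after rewriting $(\tau-z)(1-\overline{\tau}z)$ in horocyclic form. Since $H_t(z)/t\to -p(z)$ locally uniformly as $t\to 0^+$, we conclude $\Re p\ge 0$ in~$\UD$, i.e.\ $p\in\Hol(\UD,\overline{\UH}\setminus\{0\})$.

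For (iii), given $G$ of the prescribed Berkson--Porta form, Picard--Lindel\"of yields local-in-time solutions of $\dot w=G(w)$, $w(0)=z$, for every $z\in\UD$. The crucial point is global existence inside $\UD$: solutions must not escape in finite time. This follows from a Lyapunov-type estimate whose essence is that the sign condition $\Re p\ge 0$ forces the hyperbolic pseudo-distance from the orbit to~$\tau$ to be non-increasing. Concretely, one computes $\tfrac{\di}{\di t}|w(t)|^2=2\Re(\overline{w}G(w))$ (in the case $\tau\in\partial\UD$ one uses instead a horocyclic function centred at~$\tau$) and checks that the resulting expression stays non-positive as $|w|\to 1^-$, using the factorization of~$G$ and $\Re p\ge 0$. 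Once the flow is globally defined, the resulting family $\phi_t(z):=w(t;z)$ is a one-parameter semigroup by uniqueness in the Cauchy problem, and the same uniqueness yields the uniqueness of the semigroup associated with~$G$.

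The main obstacle I anticipate is step (i), the passage from pointwise continuity of $t\mapsto\phi_t(z)$ to joint differentiability in~$t$ with a \emph{holomorphic} limit $G$; this requires a careful interplay between normal-family compactness, the Schwarz--Pick estimate, and the semigroup identity. Verifying $\Re p\ge 0$ in step (ii) at the boundary DW-point is the other delicate point, but it is precisely Julia's lemma~\eqref{EQ-JuliasLemma} written in the right coordinates.
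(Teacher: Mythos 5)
This is Theorem~\ref{TH_BP} (Berkson--Porta), which the paper does not prove: it is quoted as a classical result with references to \cite{BerPor78} and \cite[Sect.\,10.1]{BCD-Book}, so your sketch has to be measured against the standard proofs, whose overall strategy you do follow. The serious problems are in steps (i) and (iii). In step (i) there are two genuine gaps. First, the local Lipschitz claim does not follow from what you say: Schwarz--Pick reduces $|\phi_{t+h}(z)-\phi_t(z)|$ to (a multiple of) $|\phi_h(z)-z|$, and continuity of the semigroup at $0^+$ only makes this $o(1)$, not $O(h)$; producing the $O(h)$ bound (equivalently, differentiability in $t$) is precisely the analytic heart of the theorem, and in the standard proofs it is obtained by a different device, e.g.\ the averaging trick $z\mapsto\frac1h\int_0^h\phi_s(z)\,\di s$ borrowed from $C_0$-semigroup theory. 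Second, even granting local Lipschitz continuity, convergence of $(\phi_{t_n}-\id_\UD)/t_n$ along one sequence $t_n\searrow0$ does not give right-differentiability: the semigroup identity then controls the difference quotients only along $h=t_n$, and your remark that full convergence holds ``a posteriori'' assumes exactly what is missing, namely that all subsequential limits coincide. A correct repair uses a.e.\ differentiability of the Lipschitz curve: at a differentiability point $t_0$ write $\phi_{t_0+h}=\phi_{t_0}\circ\phi_h$ and factor the quotient as $\frac{\phi_{t_0}(\phi_h(z))-\phi_{t_0}(z)}{\phi_h(z)-z}\cdot\frac{\phi_h(z)-z}{h}$ with the first factor tending to $\phi_{t_0}'(z)\neq0$, which transfers the existence of the derivative at $t_0$ to the existence of the full limit at $0$ (alternatively, evaluate two subsequential limits at $\phi_t(z)$ for a.e.\ $t$ and let $t\to0^+$). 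Without some such argument $G$ is not well defined.

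In step (iii) the elliptic case is fine (after normalizing $\tau=0$, $\frac{\di}{\di t}|w|^2=-2|w|^2\Re p(w)\le0$), but the boundary case is not settled by your Lyapunov remark: monotonicity of the horocyclic function $|\tau-w|^2/(1-|w|^2)$ confines the orbit to a horodisk, yet the closure of a horodisk meets $\partial\UD$ at $\tau$, so this does not exclude the one dangerous scenario, namely the orbit reaching $\tau$ (i.e.\ $|w(t)|\to1$) in finite time. Ruling this out requires an additional argument --- for instance passing to the half-plane via $z\mapsto(\tau+z)/(\tau-z)$, where the equation becomes $\dot\omega=2p(\,\cdot\,)$ with nonnegative real part, and excluding finite-time blow-up by means of the Nevanlinna representation and Julia's lemma at infinity; or bypassing the ODE and constructing $(\phi_t)$ from a univalent model $h$ with $h'=c/G$ and $\phi_t=h^{-1}(h+t)$, which is how the converse is usually established. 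Finally, two smaller slips in step (ii): with your definition of $H_t$ the correct inequality is $\Re H_t\le0$, not $\ge0$ (indeed $H_t/t\to-p$, and your own reduction at $\tau=0$ gives $\Re\big((z-\phi_t(z))/z\big)\ge0$, i.e.\ $\Re H_t\le0$); and at a boundary DW-point Julia's lemma gives, for fixed $t$, nonnegativity of $\Re\big[\tau(\phi_t(z)-z)/\big((\tau-\phi_t(z))(\tau-z)\big)\big]$, with the mixed denominator --- the form with $(\tau-z)(1-\overline\tau z)$ is recovered only in the limit $t\to0^+$, which is all you need, but should be stated.
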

\begin{definition}
The vector field~$G$ in the above theorem is called the \textsl{infinitesimal generator} of the one-parameter semigroup~$(\phi_t)$.
\end{definition}
The Berkson\,--\,Porta formula~\eqref{EQ_BP-formula} gives a necessary and sufficient condition for a holomorphic function~$G$ to be an infinitesimal generator.

Let us now discuss boundary fixed points of one-parameter semigroups.
\begin{definition}
A point $\sigma\in\UC$ is a \textsl{boundary regular fixed point} of a one-parameter semigroup~$(\phi_t)$, if $\sigma$ is a BRFP of~$\phi_t$ for all~$t\ge0$.
\end{definition}
\begin{remark}
In fact, the set of all BRFPs is the same for each~$\phi_t$ different from~$\id_\UD$, see e.g. \cite[Theorem 12.1.4]{BCD-Book} Hence ``for all~$t\ge0$'' in this definition can be replaced with ``for some~$t>0$ with~$\phi_t\neq\id_\UD$''.
\end{remark}

The following theorem characterizes BRFPs of one-parameter semigroups via their infinitesimal generators.
\begin{result}[\protect{\cite[Theorem~1]{CoDiPo06}}]\label{TH_BRFP-withPommerenke}
Let $(\phi_t)$ be a one-parameter semigroup in~$\UD$ and let $G$ be its infinitesimal generator.
Then $\sigma\in\partial\UD$ is BRFP of~$(\phi_t)$ if and only if the angular limit
$$
\lambda:=\anglim_{z\to\sigma}\frac{G(z)}{\sigma-z}
$$
exists finitely. In such a case, $\lambda\in\Real$ and $G'(z)\to-\lambda$ as $z\to\sigma$ non-tangentially. Moreover, $\phi_t'(\sigma)=e^{-\lambda t}$ for all~$t\ge0$.
\end{result}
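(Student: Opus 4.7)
The plan is to connect the boundary data of $G$ at $\sigma$ with the angular-derivative data of each time-$t$ map $\phi_t$ at $\sigma$ through the defining ODE $\partial_t\phi_t(z)=G(\phi_t(z))$ and the Berkson\,--\,Porta representation \eqref{EQ_BP-formula}, with Julia's inequality \eqref{EQ-JuliasLemma} supplying the uniform non-tangential control. Throughout, set $p(z):=G(z)/\bigl[(\tau-z)(1-\overline{\tau}z)\bigr]\in\Hol(\UD,\overline{\UH}\setminus\{0\})$.

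For the forward direction, assume $\sigma$ is a BRFP of $(\phi_t)$, so that $\phi_t'(\sigma)\in(0,+\infty)$ for every $t\ge 0$. The chain rule for angular derivatives at common boundary fixed points together with $\phi_{s+t}=\phi_s\circ\phi_t$ gives $\phi_{s+t}'(\sigma)=\phi_s'(\sigma)\phi_t'(\sigma)$; combined with the continuity of $t\mapsto\phi_t$ in $\Hol(\UD,\UD)$, this forces $\phi_t'(\sigma)=e^{-\lambda t}$ for a unique $\lambda\in\Real$. The Julia\,--\,Wolff Lemma applied to $\phi_t$ then upgrades this derivative equality to
$$
\frac{\phi_t(z)-\sigma}{z-\sigma}\;\longrightarrow\;e^{-\lambda t}\qquad\text{as }z\to\sigma\text{ non-tangentially,}
$$
uniformly on fixed non-tangential sectors, by virtue of \eqref{EQ-JuliasLemma}. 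Integrating the ODE yields the identity
$$
\frac{1}{t}\left(\frac{\phi_t(z)-\sigma}{z-\sigma}-1\right)=\frac{1}{t(z-\sigma)}\int_0^t G(\phi_s(z))\,\mathrm{d}s,
$$
and passing to the angular limit $z\to\sigma$ and then letting $t\to 0^+$ (with the roles of the two limits interchanged) produces $\anglim_{z\to\sigma}G(z)/(\sigma-z)=\lambda$. Justifying this interchange of limits is the crux of the argument and is handled by means of the local boundary estimates for Herglotz-class functions to be developed in Sect.\,\ref{S_Herglotzfunctions}, applied to $p$.

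For the converse, assume that $\lambda:=\anglim_{z\to\sigma}G(z)/(\sigma-z)$ exists and is finite. Since $\sigma\ne\tau$, the factors $(\tau-z)$ and $(1-\overline{\tau}z)$ admit non-zero finite limits at $\sigma$, so $p$ has a finite angular limit at $\sigma$; because $\Re p\ge 0$, classical boundary theory for the Herglotz class forces this limit to be real, and hence $\lambda\in\Real$. A horocyclic Julia-type estimate built from the asymptotics of $G$ at $\sigma$ then traps the orbits of the ODE in shrinking horocycles at $\sigma$, giving $\phi_t(\sigma)=\sigma$ and $\phi_t'(\sigma)=e^{-\lambda t}$ by integrating the ODE in $t$, which is the BRFP property. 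Finally, the non-tangential convergence $G'(z)\to-\lambda$ follows from a Julia\,--\,Carath\'eodory-type statement for the Herglotz class applied once more to $p$. The main obstacle throughout is the careful justification of these angular-limit interchanges, which is precisely what the Herglotz-class preparations of Sect.\,\ref{S_Herglotzfunctions} are designed to enable.
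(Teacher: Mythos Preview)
The paper does not contain its own proof of this statement. Theorem~\ref{TH_BRFP-withPommerenke} is stated in the \texttt{result} environment, which the paper reserves for results quoted from the literature; it is attributed to \cite[Theorem~1]{CoDiPo06} and used as a black box throughout Sect.\,\ref{S_ValueRegions} and Sect.\,\ref{S_param-and-CP-ineq}. Hence there is nothing in the paper to compare your proposal against.

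As for the proposal itself, the overall architecture is reasonable, but several key steps are only promised, not carried out. In the forward direction you need continuity of $t\mapsto\phi_t'(\sigma)$ to pass from multiplicativity to the exponential form; locally uniform convergence of $\phi_t$ in~$\UD$ does not by itself control a boundary angular derivative, so this requires a separate argument. The interchange of the limits $z\to\sigma$ and $t\to0^+$ is, as you acknowledge, the heart of the matter, and the material actually developed in Sect.\,\ref{S_Herglotzfunctions} (contact points, the quantities $p\plw(\sigma)$ and $p\zrw(\sigma)$) does not directly supply that interchange; you would still have to do real work here. In the converse direction, the sentence ``because $\Re p\ge 0$, classical boundary theory for the Herglotz class forces this limit to be real'' is not quite right: what happens when $\sigma\neq\tau$ is that the finiteness of $\anglim_{z\to\sigma}G(z)/(\sigma-z)$ forces $p(\sigma)=0$, and then $\lambda=-|\tau-\sigma|^2\,p\zrw(\sigma)$ is real because $p\zrw(\sigma)\in[0,+\infty)$ by Remark~\ref{RM_p-pole-zero}; the case $\sigma=\tau\in\UC$ is handled separately via $p\plw(\tau)$. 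If you want a complete proof, consult \cite{CoDiPo06} directly.
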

\begin{definition}
The number~$\lambda$ in the above theorem is called the \textit{spectral value} of a one-parameter semigroup~$(\phi_t)$ \textsl{at a BRFP~$\sigma$.}
\end{definition}
Note that $\lambda<0$ for all BRFPs $\sigma$ different from the DW-point of~$(\phi_t)$.

\begin{remark}
Spectral values of a one-parameter semigroup at the DW-point and BRFPs can be interpreted, in a sense, as \textit{Lyapunov exponents} of fixed points of a dynamical system. If $\lambda$ is the spectral value, then $-\Re\lambda$ is the corresponding Lyapunov exponent.
\end{remark}

\section{Main results}\label{S_mainresult}
One {obvious and} natural question to study is what one can say about a one-parameter semigroup given information about its fixed points, e.g. position of its DW-point~$\tau$ and a finite set of (repelling) boundary regular fixed points, together with the corresponding spectral values. One of the basic quantities to look at is the velocity and direction of the trajectories, i.e. the value of the infinitesimal generator at a given point~$z_0\in\UD$. Using Moebius transformations, we can fix one of the points: e.g. we may suppose~$z_0=0$, keeping the DW-point~$\tau$ arbitrary.

The key tool is the following representation formula, see Theorem~\ref{TH_representation-formula}. Fix some negative numbers $\lambda_1,\ldots,\lambda_n$. A function $G:\UD\to\Complex$  is the infinitesimal generator of a one-parameter semigroup having the DW-point~$\tau\in\overline\UD$ and pairwise distinct boundary repelling fixed pints $\sigma_k$, $k=1,\ldots,n$, with the spectral values $\lambda'_k$ satisfying $\lambda_k\le\lambda'_k<0$ if and only if
\begin{equation}\label{EQ_mainresults-representation}
G(z)=(\tau-z)(1-\overline\tau z)\Big(p(z)\,+\,\sum_{k=1}^n\frac{|\tau-\sigma_k|}{2|\lambda_k|}\frac{\sigma_k+z}{\sigma_k-z}\Big)^{-1}
\end{equation}
for all~$z\in\UD$ and some $p\in\Hol(\UD)$ with $\Re p\ge0$.
Equality $\lambda'_k=\lambda_k$ holds if and only if $\anglim_{z\to\sigma_k} p(z)(1-\overline\sigma_k z)=0$. As we mentioned in the Introduction, formula~\eqref{EQ_mainresults-representation} in a bit weaker form was obtained earlier by Goryainov~\cite{Goryainov-survey}.

In Sect.\,\ref{S_ValueRegions}, using the above representation in combination with the Krein\,--\,Milman theory, see Sect.\,\ref{S_KreinMilman}, we find a number of (sharp) value regions relating~$G(0)$ with the repelling spectral values and local characteristics of~$G$ at the DW-point. In particular, if $G$ is the infinitesimal generator of a one-parameter semigroup $(\phi^G_t)$ with the DW-point $\tau\in\UD$, $\tau\neq0$, and BRFPs $\sigma_k\in\UC$, $k=1,\ldots,n$, then
\begin{equation}\label{EQ_G(0)}
\Re\frac{\tau}{G(0)}\ge A:=\sum_{k=1}^{n}\frac{|\tau-\sigma_{k}|^{2}}{2|\lambda_{k}|},
\end{equation}
\begin{multline}\label{Eq:pavel}
 \frac{1-|\tau|^2}{1+|\tau|^{2}}\left[ \Re\frac{\tau}{G(0)}-A\right]\le\\
\le ~\Re\frac{1-|\tau|^{2}}{\lambda}-\frac{1-|\tau|^2}{2}\sum_{k=1}^n\frac1{|\lambda_k|}~%
   \le~ \frac{1+|\tau|^2}{1-|\tau|^{2}}\left[ \Re\frac{\tau}{G(0)}-A\right],
\end{multline}
and
\begin{equation} \label{Eq:santiagoI}
    \left|\Im \left(\frac{1-|\tau|^{2}}{\lambda}-\frac{\tau}{G(0)}\right)- B\right|\leq \frac{2|\tau|}{1-|\tau|^{2}}\left[ \Re\frac{\tau}{G(0)}-A\right]\!\!,
    \end{equation}
{where $B:=\sum_{k=1}^{n}\frac{\Im (\overline \sigma_k\tau)}{|\lambda_{k}|}$ and  }$\lambda>0$, $\lambda_1,\ldots,\lambda_n<0$ are the spectral values of~$(\phi_t^G)$ at~$\tau$ and at $\sigma_1,\ldots,\sigma_n$, respectively. These inequalities are direct corollaries of Theorem~\ref{TH_value-region-interior}. Moreover, each inequality is sharp and the infinitesimal generators~$G$ for which equalities hold are completely characterized by Theorem~\ref{TH_value-region-interior}.

Similarly, for the case $\tau=0$, the following sharp inequality follows from Theorem\,\ref{TH_value-region-tau-zero}\,:
\begin{equation} \label{Eq:pavel2}
    \left|\frac{G''(0)}{2\lambda^{2}}-\sum_{k=1}^{n}\frac{\overline\sigma_k}{|\lambda_{k}|}\right|\leq  2\, \Re\frac{1}{\lambda}-\sum _{k=1}^{n}\frac{1}{|\lambda_{k}|}.
    \end{equation}

In the boundary case $\tau\in\UC\setminus\{\sigma_1,\ldots,\sigma_n\}$, inequality~\eqref{EQ_G(0)} holds as well, and for hyperbolic one-parameter semigroups we have
\begin{equation}
2\left(\frac{1}{\lambda}-\sum_{k=1}^n\frac1{|\lambda_k|}\right) \left[\Re\frac{\tau}{G(0)}-A\right]
\ge\left[\Re\frac{\tau}{G(0)}-A\right]^2+B^2,
\end{equation}
with $A$ and $B$ defined as above. This sharp inequality follows from Theorem~\ref{TH_value-region-boundary}.

Inequalities~\eqref{EQ_G(0)}, \eqref{Eq:pavel}, and \eqref{Eq:pavel2} imply that if $\tau\in\UD$, then
\begin{equation}\label{EQ_lambda1}
2\Re\frac{1}{\lambda}\ge\sum_{k=1}^n\frac1{|\lambda_k|}.
\end{equation}
Similarly, if $\tau\in\UC$, then according to Theorem~\ref{TH_value-region-boundary},
\begin{equation}\label{EQ_lambda2}
\frac{1}{\lambda}\ge\sum_{k=1}^n\frac1{|\lambda_k|}.
\end{equation}
The two inequalities above are sharp (see Corollary~\ref{CR_lambda}) and known (see \cite{ElShTa11}, \cite{CD}).

Finally, for parabolic one-parameter semigroups, i.e. for $\tau\in\UC$, $\lambda=0$, we obtain the following sharp inequality
\begin{equation}
0\le \anglim_{z\to\tau}\frac{(\tau-z)^3}{\tau^2G(z)}\le 2\Big(\Re\frac{\tau}{G(0)}-A\Big),
\end{equation}
see Theorem~\ref{TH_value-region-boundary2}.

Representation~\eqref{EQ_mainresults-representation} contains an arbitrary holomorphic function~$p$ with ${\Re p\ge0}$, which we call a Herglotz function. We take advantage of the fact  that the  Carath\'eodory class consisting of all Herglotz functions normalized by ${p(0)=1}$ is a compact convex cone in $\Hol(\UD)$ and that its set of its extreme points is well-known.
At the same time, in Sect.\,\ref{S_ValueRegions} we introduce another compact convex cone  $\GenDW(F,\Lambda)$ of infinitesimal generators of one-parameter semigroups with the DW-point~$\tau\in\overline\UD$ and given finite set $F$ of BRFPs.
In Theorem~\ref{TH_extereme-points-bis} we give a partial characterization of the extreme points of~$\GenDW(F,\Lambda)$. With the help of Krein\,--\,Milman Theorem in integral form, we recover the representation formula for infinitesimal generators in case of one given BRFP due to Goryainov and Kudryavtseva, see Corollary~\ref{CR_GorKudr}.

The compact convex cone $\GenDW(F)$ formed by all infinitesimal generators with the DW-point~$\tau\in\overline\UD$ and given finite set $F$ of BRFPs such that the spectral values $\lambda_k$ at $\sigma_k\in F$ satisfy $\sum_{k=1}^n|\lambda_k|\le 1$, is bit easier to study. We are able to obtain an explicit complete characterization of its extreme points, see Theorem~\ref{TH_extereme-points-tre}.

Thanks to a variant of Loewner's parametric represetation, see Sect.\,\ref{S_param-representation}, our results on infinitesimal generators can be used to obtain sharp estimates for univalent self-maps of~$\UD$, including those not embeddible in a one-parameter semigroup. As an illustration, in Sect.\,\ref{S_CP-ineq}, we give another proof of well-known inequalities due to Cowen and Pommerenke.

\section{Herglotz functions}\label{S_Herglotzfunctions}
The Berkson\,--\,Porta representation~\eqref{EQ_BP-formula}, which characterizes infinitesimal generators in~$\UD$, contains an arbitrary holomorphic function $p:\UD\to\Complex$ satisfying $\Re p\ge0$. We call such a function~$p$ a \textsl{Herglotz function}. Clearly, if a Herglotz function satisfies $\Re p(z)=0$ for some~$z\in\UD$, then $p$ is equal to a purely imaginary constant, and in this case $p$ is said to be a \textsl{trivial} Herglotz function.

In what follows we will need the following classical result, which is a version of Julia's Lemma for the half-plane.
Recall that by $\UH$ we denote the righ half-plane $\{z\colon \Re z>0\}$.

\begin{result}[\protect{see e.g. \cite[\S26]{Valiron:book}}]\label{TH_Julia-half-plane}
For any $f\in\Hol(\UH,\overline\UH)$, the limit
$$
f'(\infty):=\anglim_{\zeta\to\infty}\frac{f(\zeta)}{\zeta}
$$
exists finitely. Moreover,
$$
f'(\infty)=\inf_{\zeta\in\UH}\frac{\Re f(\zeta)}{\Re \zeta}\ge0.
$$
In particular, $$f(\zeta)=f'(\infty)\zeta+g(\zeta)\vphantom{\textstyle\int\limits_0^1}$$ for all $\zeta\in\UH$ and some $g\in\Hol(\UH,\overline\UH)$ satisfying $g'(\infty)=0$.
\end{result}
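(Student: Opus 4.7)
The plan is to pull the statement back to the unit disk via the Cayley transform and apply the Herglotz representation theorem for holomorphic functions with non-negative real part. Let $T\colon\UD\to\UH$ be the conformal map $T(w) = (1+w)/(1-w)$, which sends $1\in\partial\UD$ to~$\infty$ and intertwines non-tangential approach to~$1$ in~$\UD$ with non-tangential approach to~$\infty$ in~$\UH$. Set $F := f\circ T \in \Hol(\UD,\overline\UH)$. The Herglotz representation produces a real constant~$c$ and a finite positive Borel measure~$\mu$ on $\partial\UD$ with
$$F(w) \;=\; ic \,+\, \int_{\partial\UD}\frac{\xi+w}{\xi-w}\,d\mu(\xi), \qquad w\in\UD.$$
Both quantities appearing in the theorem then translate into expressions that can be read directly off this integral.

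For the angular limit, I would write
$$\frac{F(w)}{T(w)} \;=\; \frac{ic(1-w)}{1+w} \,+\, \int_{\partial\UD}\frac{(1-w)(\xi+w)}{(1+w)(\xi-w)}\,d\mu(\xi),$$
observe that the integrand converges as $w\to 1$ non-tangentially to~$1$ at $\xi=1$ and to~$0$ at every $\xi\neq 1$, and invoke dominated convergence using the Stolz-angle bound $|\xi-w|\ge c_K(|\xi-1|+1-|w|)$, which yields $|(1-w)/(\xi-w)|\le 1+1/c_K$ uniformly in $\xi\in\partial\UD$. This gives $f'(\infty)=\mu(\{1\})\ge 0$. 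For the infimum, I would use the identities $\Re T(w)=(1-|w|^2)/|1-w|^2$ and $\Re[(\xi+w)/(\xi-w)]=(1-|w|^2)/|\xi-w|^2$ to rewrite
$$\frac{\Re F(w)}{\Re T(w)} \;=\; \int_{\partial\UD}\frac{|1-w|^2}{|\xi-w|^2}\,d\mu(\xi).$$
Bounding the integrand from below by its value $1$ at $\xi=1$ gives $\Re F(w)/\Re T(w)\ge\mu(\{1\})$ on all of~$\UD$, while the same dominated-convergence argument shows this lower bound is actually attained in the non-tangential limit $w\to 1$. Hence $\inf_{\zeta\in\UH}\Re f(\zeta)/\Re \zeta = f'(\infty)$.

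Finally, setting $g(\zeta) := f(\zeta)-f'(\infty)\zeta$, the infimum identity yields $\Re g \ge 0$ on~$\UH$, so $g\in\Hol(\UH,\overline\UH)$; dividing by~$\zeta$ and letting $\zeta\to\infty$ non-tangentially gives $g'(\infty)=f'(\infty)-f'(\infty)=0$. The only real technical step is the dominated-convergence estimate: verifying that $|(1-w)/(\xi-w)|$ admits a bound on each Stolz angle at~$1$ that is uniform in $\xi\in\partial\UD$. This is a standard angular estimate, and the rest of the argument is bookkeeping around the Herglotz formula.
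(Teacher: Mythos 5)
Your proof is correct. The paper itself does not prove this statement; it is quoted from Valiron's book as a known fact, so there is no internal argument to compare against. Your route — transfer to the disk via the Cayley map $T(w)=(1+w)/(1-w)$, invoke the Riesz\,--\,Herglotz representation of $F=f\circ T$, read off both $\anglim F(w)/T(w)$ and $\Re F(w)/\Re T(w)$ from the integral, and identify both with $\mu(\{1\})$ — is a standard and fully self-contained way to establish the half-plane Julia lemma. It is also very much in the spirit of how the paper handles nearby statements: the proof of Lemma~\ref{LM_Sarason} passes from the Riesz\,--\,Herglotz formula to boundary quantities by a limit-under-the-integral argument of exactly this type, and Lemma~\ref{LM_Julia} is deduced from the present theorem by the same Cayley conjugation you use.

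Two small points of precision, neither of which affects the validity. First, the phrase ``bounding the integrand from below by its value $1$ at $\xi=1$'' should be read as: the integrand $|1-w|^2/|\xi-w|^2$ is nonnegative, so the integral over $\partial\UD$ dominates its restriction to the atom $\{1\}$, where the integrand is identically $1$; this gives $\Re F(w)/\Re T(w)\ge\mu(\{1\})$. Second, the Stolz-angle estimate is correct and worth recording explicitly: if $|1-w|\le M(1-|w|)$, then $|\xi-w|\ge 1-|w|$ and $|\xi-w|\ge|\xi-1|-M(1-|w|)$ together yield $(M+2)|\xi-w|\ge |\xi-1|+(1-|w|)$, hence $|(1-w)/(\xi-w)|\le M(M+2)$ uniformly in $\xi\in\partial\UD$. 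Dominated convergence with this bound then gives both the existence of $f'(\infty)$ and the attainment of the infimum in the non-tangential limit, completing the argument.
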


An imporant role in the present study is played by what we call contact points of Herglotz functions.

\begin{definition} Let $p$ be a Herglotz function. A point $\sigma\in\partial \D$ is called a \textit{contact point} for $p$ if the angular limit $p(\sigma):=\angle\lim_{z \to \sigma} p(z)$ exists and belongs to~$i\R$. Moreover, $\sigma$ is said to be a \textit{regular contact point} of $p$ if
\begin{equation}\label{EQ_zrw}
\angle\lim_{z \to \sigma}\frac{p(z)-p(\sigma)}{1-\overline{\sigma}z}
\end{equation}
exists finitely. If additionally $p(\sigma)=0$, we say that $\sigma$ is a \textit{regular zero} of $p$.
\end{definition}

For any Herglotz function~$p$ and any~$\sigma\in\UC$, we denote
\[
    p\zrw(\sigma):=
    \left\{
    \begin{array}{ll}
    \angle\lim_{z \to \sigma}\dfrac{p(z)-p(\sigma)}{1-\overline{\sigma}z}, & \textrm{if }\sigma \textrm{ is a regular contact point of } p, \\
    +\infty,& \textrm{otherwise}.
        \end{array}
    \right.
\]

\begin{remark}\label{RM_normal-functions}
Thanks to Montel's criterion, if $f\in\Hol(\UD)$ and $\C\setminus f(\UD)$ contains at least two distinct points, then $f$ is normal in~$\UD$, see \cite[\S9.1]{Pombook75}. According to the general version of Lindel\"of's Theorem due to Lehto and Virtanen (see, e.g., \cite[Theorem~9.3 on~p.\,268]{Pombook75}), if such a function~$f$ has a radial limit $\lim_{r\to1^-}f(r\sigma)\in\ComplexE$ at some~$\sigma\in\UC$, then it also has the angular limit at~$\sigma$. In particular, the angular limit $\anglim_{z\to\sigma}p(z)$ in the above definition can be replaced by the corresponding radial limit. Note that the angular limit in~\eqref{EQ_zrw} can be also replaced by the radial limit, but the reason for that is completely different. (Namely, one should use the Julia\,--\,Wolff\,--\,Carath\'eodory Theorem, see e.g. \cite[Sect. 1.7]{BCD-Book}.)
\end{remark}

\begin{remark}\label{RM_p-pole-zero}
Given a Herglotz function~$p$, according to Julia's Lemma in the half-plane (see Theorem~\ref{TH_Julia-half-plane}) applied to $\UH\ni\zeta\mapsto f(\zeta):=p\big(\sigma\frac{\zeta-1}{\zeta+1}\big)$, for any~$\sigma\in\UC$ the angular limit
$$
p\plw(\sigma):=\anglim_{z\to\sigma}(1-\overline \sigma z)p(z)
$$
exists and it is a non-negative real number.
Note that if $p$ is a non-trivial Herglotz function and $\sigma$ is a contact point of $p$, then $1/p$ and $1/(p-p(\sigma)$) are also (non-trivial) Herglotz functions. It follows that ${p\zrw(\sigma)\in[0,+\infty]}$, with and $p\zrw(\sigma)=0$ for some (resp. all) $\sigma\in\partial \UD$ if and only if $p$ is a trivial Herglotz function.
\end{remark}
\begin{remark}\label{RM_semicont}
Julia's  Lemma for the half-plane mentioned above tells us also that
\begin{equation}\label{EQ_Julia-inf-Re-Re}
p\plw(\sigma)=2\inf_{\Re\zeta>0}\frac{\Re p\big(\sigma\frac{\zeta-1}{\zeta+1}\big)}{\Re\zeta}
\end{equation}
for any Herglotz function~$p$ and any~$\sigma\in\UC$.
It follows that the map $p\mapsto p\plw(\sigma)$ is upper semicontinuous.
\end{remark}

\begin{theorem}\label{TH_regular-contact-point-of-HFunc}
\mbox{~}
\begin{itemize}
\item[(A)] For any Herglotz function~$p$ and any~$\sigma\in\UC$,
$$
p\zrw(\sigma)=\lim_{r\to1^-}\frac{\Re p(r\sigma)}{1-r}\in[0,+\infty].
$$
In particular, $\sigma$ is a regular contact point for~$p$ if and only if the limit in the right-hand side is finite.

\vskip1ex
\item[(B)] The map $p\mapsto p\zrw(\sigma)$ is lower semi-continuous on the cone in~$\Hol(\UD)$ formed by all Herglotz functions~$p$.
\end{itemize}
\end{theorem}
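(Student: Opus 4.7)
The plan is to reduce both assertions to statements about the Riesz--Herglotz measure of~$p$. Every Herglotz function admits the representation $p(z)=ic_0+\int_{\UC}\frac{\xi+z}{\xi-z}\,\di\mu(\xi)$ for a real constant~$c_0$ and a finite positive Borel measure~$\mu$ on~$\UC$ (uniquely determined by~$p$). Both the quantity $\Re p(r\sigma)/(1-r)$ appearing in~(A) and the boundary derivative $p\zrw(\sigma)$ can be written as integrals against~$\mu$, after which the theorem becomes a measure-theoretic statement amenable to the standard convergence tools and, for~(B), weak convergence of measures.

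For~(A) I would first compute
\begin{equation*}
   \frac{\Re p(r\sigma)}{1-r}\;=\;\int_{\UC}\frac{1+r}{|\xi-r\sigma|^2}\,\di\mu(\xi).
\end{equation*}
The integrand converges pointwise to $2/|\xi-\sigma|^2$ for $\xi\ne\sigma$, and the identity $|\xi-r\sigma|^2=(1-r)^2+2r(1-\Re(\xi\bar\sigma))\ge r|\xi-\sigma|^2$ furnishes an integrable $\mu$-dominant of the form $\const/|\xi-\sigma|^2$ on $r\in[1/2,1)$. I would then split into three cases depending on whether $\mu$ has an atom at~$\sigma$, or has no atom but $\int 2/|\xi-\sigma|^2\,\di\mu=+\infty$, or $\int 2/|\xi-\sigma|^2\,\di\mu<+\infty$. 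In the last case dominated convergence gives the limit directly, and a parallel application of dominated convergence to~$p$ itself (Cauchy--Schwarz gives $\int\di\mu/|\xi-\sigma|<\infty$) shows that $p(r\sigma)$ converges to a purely imaginary value~$p(\sigma)$ while $(p(r\sigma)-p(\sigma))/(1-r)$ converges to $\int\frac{-2\xi\sigma}{(\xi-\sigma)^2}\,\di\mu$; the algebraic identity $-2\xi\sigma/(\xi-\sigma)^2=2/|\xi-\sigma|^2$ on~$\UC\times\UC$ then identifies $p\zrw(\sigma)$ with the radial limit. In the remaining two cases, Fatou's lemma (or the atom contribution~$\mu(\{\sigma\})(1+r)/(1-r)^2$) forces $\Re p(r\sigma)/(1-r)\to+\infty$, and one then has to argue separately that $p\zrw(\sigma)=+\infty$.

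For~(B), the map $p\mapsto\mu$ is continuous from Herglotz functions (with the compact-open topology) to finite positive Borel measures on~$\UC$ endowed with the weak topology: $p_n\to p$ locally uniformly forces $\Re p_n\to\Re p$ locally uniformly, the Poisson extension is a homeomorphism onto positive harmonic functions, hence $\mu_n\to\mu$ weakly$^*$, and matching of total masses $\mu_n(\UC)=\Re p_n(0)\to\Re p(0)=\mu(\UC)$ upgrades this to weak convergence on~$C(\UC)$. Using the identification from~(A), I would then write
\begin{equation*}
   p\zrw(\sigma)\;=\;\int_{\UC}\frac{2}{|\xi-\sigma|^2}\,\di\mu\;=\;\sup_{N\ge 1}\int_{\UC}\min\!\Big(\frac{2}{|\xi-\sigma|^2},\,N\Big)\,\di\mu
\end{equation*}
by monotone convergence; each functional $h_N:p\mapsto\int\min(2/|\xi-\sigma|^2,N)\,\di\mu$ is continuous because its integrand is continuous and bounded on~$\UC$, and a supremum of continuous functions is lower semi-continuous.

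The step requiring most care is the identification $p\zrw(\sigma)=+\infty$ in the two degenerate cases of~(A): the measure-theoretic computation only delivers $\Re p(r\sigma)/(1-r)\to+\infty$ along the radius, so one has to deduce that either $p$ has no finite angular limit in~$i\R$ at~$\sigma$ or the angular limit of $(p(z)-p(\sigma))/(1-\bar\sigma z)$ is infinite. This uses the normality of~$p$, and of auxiliary functions of the form $(p-ic)/(1-\bar\sigma z)$, together with the Lehto--Virtanen form of Lindel\"of's theorem (cf.~Remark~\ref{RM_normal-functions}) to promote the radial divergence to the corresponding angular statement.
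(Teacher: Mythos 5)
Your reduction to the Riesz--Herglotz measure is a genuinely different route from the paper's. The paper proves (A) by applying the Julia--Wolff inequality to $f=(p-1)/(p+1)$ and deriving a contradiction when the radial quotient stays bounded at a non-regular point, and proves (B) by a normal-family argument with the auxiliary functions $q_n=1/(p_n-p_n(\sigma))$ combined with the upper semicontinuity of $p\mapsto p\plw(\sigma)$; the identity $p\zrw(\sigma)=2\int_{\UC}|\varsigma-\sigma|^{-2}\di\mu(\varsigma)$ appears there only afterwards, as Lemma~\ref{LM_Sarason}, and is deduced \emph{from} part~(A). Your plan inverts this: you prove the integral identity directly (so you may not quote Lemma~\ref{LM_Sarason}, but your computation is self-contained, hence there is no circularity) and then obtain~(B) from weak convergence of the Herglotz measures together with the monotone truncation $\sup_N\int_{\UC}\min\big(2|\varsigma-\sigma|^{-2},N\big)\di\mu$. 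That part of the argument is correct, and arguably cleaner than the paper's proof of~(B).

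The gap is in the finite case of~(A). By definition, $p\zrw(\sigma)$ is the \emph{angular} limit~\eqref{EQ_zrw}, and it equals $+\infty$ whenever that angular limit fails to exist; your dominated-convergence computation produces only the \emph{radial} limit of $(p(r\sigma)-p(\sigma))/(1-r)$, so the sentence ``the algebraic identity then identifies $p\zrw(\sigma)$ with the radial limit'' is exactly the step that still needs an argument. Two ways to close it: quote the Julia--Wolff--Carath\'eodory theorem, as indicated in Remark~\ref{RM_normal-functions}, or rerun your dominated convergence in a Stolz angle: writing $(p(z)-p(\sigma))/(1-\overline\sigma z)=\int_{\UC}\tfrac{-2\varsigma\sigma}{(\varsigma-z)(\varsigma-\sigma)}\di\mu(\varsigma)$ and using $|z-\sigma|\le C(1-|z|)\le C|\varsigma-z|$, hence $|\varsigma-\sigma|\le(1+C)|\varsigma-z|$, the integrand is dominated by $2(1+C)|\varsigma-\sigma|^{-2}$, so the angular limit exists and equals the same integral; likewise the existence of the angular (not merely radial) limit $p(\sigma)$ should be justified by normality and Lindel\"of, i.e. again Remark~\ref{RM_normal-functions}. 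By contrast, the step you single out as ``requiring most care'' is immediate: if $\sigma$ were a regular contact point, the radial limit of $\Re p(r\sigma)/(1-r)$ would be finite (the radius lies in every Stolz angle and $\Re p(\sigma)=0$), contradicting the divergence you obtained from the measure, so no normality or Lehto--Virtanen argument is needed in the degenerate cases.
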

\begin{proof}
According Remark~\ref{RM_p-pole-zero}, if $\sigma$ is a regular contact point for~$p$, then $p\zrw(\sigma)$ is a non-negative number. Setting $z:=r\sigma$ and passing to the real part in~\eqref{EQ_zrw} yields assertion~(A) for the case of a regular contact point.

Assume now that $\sigma$ is \textsl{not}  a regular contact point for~$p$. We have to show that $\Re p(r\sigma)/(1-r)\to+\infty$ as $r\to1^-$.
Suppose on the contrary that $\Re p(r_n\sigma)\le M(1-r_n)$ for some constant $M\ge0$ and some sequence $(r_n)\subset[0,1)$ converging to~$1$. Clearly, we may suppose that $p$ is a non-trivial Herglotz function. Consider $f\in\Hol(\UD,\UD)$ defined by $f(z):=\big(p(z)-1\big)/\big(p(z)+1\big)$ for all~$z\in\UD$. By dropping a finite number of term in~$(r_n)$ we may suppose also that $\Re p(r_n\sigma)<1$ for all~$n\in\Natural$. Then
$$
 \frac{1-|f(r_n\sigma)|}{1-r_n}\le \frac{2\Re p(r_n\sigma)}{1-r_n}\le 2M\qquad\text{for all~$n\in\Natural$.}
$$
It follows that the boundary dilation coefficient of~$f$ at~$\sigma$ is finite and hence there exist finite limits
$$
 f(\sigma):=\anglim_{z\to\sigma}f(z) \quad\text{and}\quad f'(\sigma):=\anglim_{z\to\sigma}\frac{f(z)-f(\sigma)}{z-\sigma},
$$
with $\sigma\overline{f(\sigma)}f'(\sigma)>0$, see e.g. \cite[\S1.2.1]{Abate}. If $f(\sigma)\neq 1$, then we immediately see that $\sigma$ is a regular contact point for~$p$. If $f(\sigma)=1$, then it follows that there exists $$\lim_{r\to1^-}p(r\sigma)(1-r)=2/|f'(\sigma)|>0.$$ In both cases, our conclusions contradict the assumptions. This completes the proof of~(A).

To prove~(B) consider a sequence of Herglotz functions~$(p_n)$ converging locally uniformly in~$\UD$ to a Herglotz function~$p_0$ and such that $p_n\zrw(\sigma)$ tends to some $a\in[0,+\infty]$ as $n\to+\infty$. We have to show that ${p_0\zrw(\sigma)\le a}$.  Clearly, we may suppose that $a<+\infty$ and that $p_0$ is a non-trivial Herglotz function. Then, for all $n\in\Natural$ large enough, let us say for $n>n_0$,  $p_n$ is a non-trivial Herglotz function having a regular contact point at~$\sigma$ with $p_n\zrw(\sigma)<2a$.

Consider functions $q_n(z):=1/\big(p_n(z)-p_n(\sigma)\big)$, ${z\in\UD}$. Since $q_n\plw(\sigma)=1/p_n\zrw(\sigma)>1/(2a)$ for all~$n>n_0$, equality~\eqref{EQ_Julia-inf-Re-Re} in Remark~\ref{RM_semicont} for $p$ replaced by~$q_n$ implies that
$$
 \Re q_n\big(\sigma\tfrac{x-1}{x+1}\big)>\frac{x}{4a}\quad\text{for all~$x>0$ and all~$n>n_0$}.
$$
It follows that $p_n(r\sigma)\to p_n(\sigma)$ as $r\to 1^-$ \textit{uniformly} w.r.t.~$n>n_0$. Taking into account Remark~\ref{RM_normal-functions}, we see that $\sigma$ is a contact point of~$p_0$ and that $q_n\to {q_0:=1/\big(p_0-p_0(\sigma)\big)}$ locally uniformly in~$\UD$ as~$n\to+\infty$.
By Remark~\ref{RM_semicont},
$$
 1/a=\lim_{n\to+\infty}q_n\plw(\sigma)\le q_0\plw(\sigma)=1/p_0\zrw(\sigma).
$$
The proof is now complete.
\end{proof}

It is well-known, see e.g. \cite[\S1.9]{Duren:uni}, that $p$ is a Herglotz function if and only if it admits the following \textit{Riesz\,--\,Herglotz representation}
\begin{equation}\label{EQ_HerglotzRepresentation}
p(z)=\int\limits_{\UC}\frac{\varsigma+z}{\varsigma-z}\,\di\mu(\varsigma)~+~i\gamma\quad\text{for all~$z\in\UD$},
\end{equation}
where $\mu$ is a positive Borel measure on~$\UC$ and $\gamma\in\Real$. Moreover, the measure~$\mu$ is uniquely defined by~$p$, with $\mu(\UC)=\Re p(0)$ and  $\gamma=\Im p(0)$.
\begin{definition}
The measure $\mu$ in the above representation~\eqref{EQ_HerglotzRepresentation} will be referred to as the \textit{Herglotz measure} of~$p$.
\end{definition}
Using the one-to-one correspondence $p\mapsto (p-1)/(p+1)$ between Herglotz functions and $\Hol(\UD,\overline\UD\setminus\{1\})$, one can reformulate \cite[(VI-9) and~(VI-10)]{Sarason:sub-Hilbert} as follows.
\begin{lemma}\label{LM_Sarason}
Let $p$ be a Herglotz function and $\mu$ the Herglotz measure of~$p$. Then
\begin{equation}\label{EQ_Sarason}
p\zrw(\sigma)=2\int_{\UC}|\varsigma-\sigma|^{-2}\,\di\mu(\varsigma)\quad\text{for any $\sigma\in\UC$.}
\end{equation}
In particular, $\sigma$ is a regular contact point of~$p$ if and only if $\varsigma\mapsto|\varsigma-\sigma|^{-2}$ is $\mu$-integrable.
\end{lemma}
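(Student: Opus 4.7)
The plan is to combine the Riesz\,--\,Herglotz representation \eqref{EQ_HerglotzRepresentation} with the characterization of $p\zrw(\sigma)$ as a radial limit provided by Theorem~\ref{TH_regular-contact-point-of-HFunc}(A), and then interchange limit and integral.

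First I would take the real part of~\eqref{EQ_HerglotzRepresentation}. Since $|\varsigma|=1$ for $\varsigma\in\UC$, the real part of the Cayley--type kernel is the Poisson kernel: $\Re\tfrac{\varsigma+z}{\varsigma-z}=(1-|z|^2)/|\varsigma-z|^2$. Specializing at $z=r\sigma$, $r\in(0,1)$, and dividing by $1-r$ yields
$$
\frac{\Re p(r\sigma)}{1-r}\,=\,\int_{\UC}\frac{1+r}{|\varsigma-r\sigma|^2}\,\di\mu(\varsigma).
$$
A direct expansion gives the useful identity $|\varsigma-r\sigma|^2 = r|\varsigma-\sigma|^2+(1-r)^2$, which in particular shows that the integrand converges pointwise as $r\to 1^-$ to $2/|\varsigma-\sigma|^2$ for $\varsigma\neq\sigma$ (and to $+\infty$ if $\varsigma=\sigma$).

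Next I would pass to the limit under the integral sign. The identity above yields the bound
$$
\frac{1+r}{|\varsigma-r\sigma|^2}\,\le\,\frac{2}{r\,|\varsigma-\sigma|^2}\qquad(r\in(0,1),\ \varsigma\in\UC\setminus\{\sigma\}).
$$
If the function $\varsigma\mapsto|\varsigma-\sigma|^{-2}$ is $\mu$-integrable (and $\mu(\{\sigma\})=0$, which is forced in this case), this bound enables dominated convergence and produces the finite limit $2\int_\UC|\varsigma-\sigma|^{-2}\di\mu(\varsigma)$. Otherwise, Fatou's lemma shows that the $\liminf$ of the integrals is already $+\infty$, so the limit equals $+\infty$ and agrees with $2\int_\UC|\varsigma-\sigma|^{-2}\di\mu(\varsigma)$ in $[0,+\infty]$.

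Finally, Theorem~\ref{TH_regular-contact-point-of-HFunc}(A) identifies the limit $\lim_{r\to 1^-}\Re p(r\sigma)/(1-r)$ with $p\zrw(\sigma)$, proving~\eqref{EQ_Sarason}. The ``in particular'' statement is then immediate from the definition of $p\zrw(\sigma)$. The only genuinely delicate point is justifying the interchange of limit and integral uniformly in the two regimes (finite/infinite integral, with or without a point mass at~$\sigma$); the elementary identity $|\varsigma-r\sigma|^2=r|\varsigma-\sigma|^2+(1-r)^2$ is what makes both the dominated-convergence bound and the pointwise limit transparent, so no real obstacle arises.
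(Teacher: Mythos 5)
Your proposal is correct and follows essentially the same route as the paper: take the real part of the Riesz--Herglotz representation at $z=r\sigma$, divide by $1-r$, pass to the limit under the integral, and invoke Theorem~\ref{TH_regular-contact-point-of-HFunc}\,(A). The only difference is that the paper multiplies by $r^2$ so that the integrand becomes monotone in $r$ and a single application of Levi's Monotone Convergence Theorem covers the finite and infinite cases at once, whereas you split into dominated convergence (where, strictly speaking, you should fix $r\ge 1/2$, say, to get an $r$-independent dominating function) and Fatou's lemma.
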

For completeness, we provide a direct proof using the same idea as in~\cite{Sarason:sub-Hilbert}.
\begin{proof}
Formula~\eqref{EQ_HerglotzRepresentation} easily implies that for any~$r\in(0,1)$,
\begin{equation}\label{EQ-Re-part}
\frac{\Re p(r\sigma)}{1-r}\,r^2=(1+r)\int_{\UC}\frac{r^2}{|\varsigma-r\sigma|^2}\,\di\mu(\varsigma),
\end{equation}
which tends to $2\int_{\UC}|\varsigma-\sigma|^{-2}\,\di\mu(\varsigma)$ as $r\to1^-$ by Levi's Monotone Convergence Theorem.  Note that this argument is valid both in case of finite and infinite value of the integral in the r.h.s. and hence it simply remains to apply Theorem~\ref{TH_regular-contact-point-of-HFunc}\,(A).
\end{proof}

\begin{remark}
It is known~\cite[Proposition~4.7 on p.\,79]{Pombook92}, see also the proof of \cite[Theorem~10.5, pp.\,305--306]{Pombook75}, that a contact point~$\sigma$ of a Herglotz function~$p$ is regular if and only if $p'$ has a finite angular limit at~$\sigma$. Therefore, in view of Lemma~\ref{LM_Sarason}, it might be plausible to expect that $\sigma\in\UC$ is a contact point (not necessarily regular) if and only if $\varsigma\mapsto |\varsigma-\sigma|^{-1}$ is integrable w.r.t. the Herglotz measure of~$p$. However, as we show below, see Lemma~\ref{LM_contact-point} and Example~\ref{EX_noconverse}, the latter condition is sufficient but not necessary for~$\sigma$ to be a contact point.
\end{remark}

\begin{lemma}\label{LM_contact-point}
Let $p$ be a Herglotz function and $\mu$ the Herglotz measure of~$p$. Let $\sigma\in\UC$. If $\varsigma\mapsto|\varsigma-\sigma|^{-1}$ is~$\mu$-integrable, then $\sigma$ is a contact point of~$p$ and
\begin{equation}\label{EQ_contact-point}
p(\sigma)=\int_{\UC}\frac{\varsigma+\sigma}{\varsigma-\sigma}\di\mu(\varsigma) + i\Im p(0)= i\int_0^{2\pi}\cot(\theta/2)\di\mu(\sigma e^{i\theta}) + i\Im p(0).
\end{equation}
\end{lemma}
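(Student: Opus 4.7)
The plan is to start from the Riesz\,--\,Herglotz representation~\eqref{EQ_HerglotzRepresentation} and pass to the radial limit along $z = r\sigma$ as $r\to 1^-$. Pointwise, for each $\varsigma\in\UC\setminus\{\sigma\}$ the integrand satisfies $\frac{\varsigma+r\sigma}{\varsigma-r\sigma}\to\frac{\varsigma+\sigma}{\varsigma-\sigma}$. To interchange limit and integration, I apply Lebesgue's dominated convergence theorem.

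The key, and essentially only non-trivial, estimate is a uniform lower bound on $|\varsigma - r\sigma|$ in terms of $|\varsigma-\sigma|$. Writing $\varsigma = \sigma e^{i\theta}$ one computes $|\varsigma - r\sigma|^{2} = 1 + r^{2} - 2r\cos\theta = (1-r)^{2} + r\,|\varsigma-\sigma|^{2}$, so $|\varsigma - r\sigma|\ge \sqrt{r}\,|\varsigma - \sigma|$ for every $r\in(0,1)$. Combined with the trivial bound $|\varsigma+r\sigma|\le 2$, this gives
$$
 \left|\frac{\varsigma+r\sigma}{\varsigma-r\sigma}\right|\le \frac{2}{\sqrt{r}\,|\varsigma-\sigma|}\qquad \text{for all } r\in(0,1) \text{ and all } \varsigma\in\UC\setminus\{\sigma\},
$$
and the right-hand side is $\mu$-integrable (for $r$ bounded away from~$0$) by the hypothesis. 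Dominated convergence then yields
$$
 \lim_{r\to1^-} p(r\sigma) \;=\; \int_{\UC}\frac{\varsigma+\sigma}{\varsigma-\sigma}\di\mu(\varsigma) \,+\, i\Im p(0).
$$

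Next I check that this limit is purely imaginary. For $|\varsigma|=|\sigma|=1$ and $\varsigma\neq\sigma$, set $w:=\bar\sigma\varsigma\in\UC\setminus\{1\}$; then $\frac{\varsigma+\sigma}{\varsigma-\sigma}=\frac{w+1}{w-1}$ is purely imaginary because its complex conjugate equals $\frac{\bar w+1}{\bar w-1}=\frac{1+w}{1-w}=-\frac{w+1}{w-1}$. Hence the integral lies in $i\R$, so $\lim_{r\to 1^-}p(r\sigma)\in i\R$. Since every Herglotz function is normal on~$\UD$ (its image omits at least two points of~$\C$), Remark~\ref{RM_normal-functions} upgrades this radial limit to an angular limit, and therefore $\sigma$ is a contact point of~$p$ and the first equality in~\eqref{EQ_contact-point} holds.

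The second equality is a purely computational step: substituting $\varsigma=\sigma e^{i\theta}$ in the integral and simplifying $\frac{e^{i\theta}+1}{e^{i\theta}-1}$ by half-angle identities produces the claimed cotangent expression (up to absorbing the factor $i$ into the measure integration). The only subtle point in the whole argument is securing the integrable majorant, which is precisely where the hypothesis that $\varsigma\mapsto|\varsigma-\sigma|^{-1}$ be $\mu$-integrable enters; once that bound is in place, the remainder of the proof is mechanical.
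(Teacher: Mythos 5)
Your proof is correct and follows essentially the same route as the paper: pass to the radial limit in the Riesz--Herglotz formula via dominated convergence with a majorant of the form $C\,|\varsigma-\sigma|^{-1}$ (the paper obtains its bound from the rescaling $\varsigma\mapsto\varsigma/r$, you from the identity $|\varsigma-r\sigma|^2=(1-r)^2+r\,|\varsigma-\sigma|^2$ --- both work), and then upgrade the radial limit to an angular one via Remark~\ref{RM_normal-functions}. The only point worth making explicit is that the integrability hypothesis forces $\mu(\{\sigma\})=0$, so your pointwise convergence and domination, stated on $\UC\setminus\{\sigma\}$, indeed hold $\mu$-almost everywhere.
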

\begin{proof}
The hypothesis implies that $\mu(\{\sigma\})=0$. Bearing this in mind, the elementary observation that $|\varsigma-\sigma|\le |(\varsigma/r)-\sigma|$ for any~$\varsigma\in\UC$ and any~$r\in(0,1)$ allows us to pass to the limit in
$$
p(r\sigma)=\frac{1}{r}\int_{\UC}\frac{\varsigma+r\sigma}{(\varsigma/r)-\sigma}\di\mu(\varsigma) + i\Im p(0)
$$
as $r\to1^-$ with the help of Lebesgue's Dominated Convergence Theorem. With Remark~\ref{RM_normal-functions} taken into account, this shows that $p(\sigma):=\anglim_{z\to\sigma}p(z)$ exists finitely  and proves the first equality in~\eqref{EQ_contact-point}. Writing~$\theta:={\arg(\overline\sigma\varsigma)}$ and separating the real and imaginary parts of the integrand leads us to the second equality in~\eqref{EQ_contact-point}. In particular, ${p(\sigma)\in i\,\Real}$, and the proof is complete.
\end{proof}

\begin{example}\label{EX_noconverse}
Let us construct a positive Borel measure $\mu$ on~$\UC$ such that $|\varsigma+1|^{-1}$ is not $\mu$-integrable but the Herglotz function $p$ given by~\eqref{EQ_HerglotzRepresentation} has a contact point at~$\sigma=-1$. Restricting consideration to measures with $\mu(\{1\})=0$ and using relation between the Herglotz\,--\,Riesz representation and Nevanlinna's representation, see e.g. \cite[p.\,135--139 and eq.\,(V.42)]{Bhatia}, we reduce the problem to finding a positive Borel measure~$\nu$ compactly supported on~$\R$ such that $1/|t|$ is not $\nu$-integrable, but the function
$$
P(z):=\int_\R\frac{\di\nu(t)}{t-z},\quad \Im z>0,
$$
tends to a real number as $z:=iy\to0$, $y>0$. Note that $\Im P\ge0$ and hence $P$ is a normal function, see e.g. \cite[pp.\,261-262]{Pombook75}. Therefore, by a theorem of Lindel\"of, see e.g. \cite[Theorem~9.3 on p.\,268]{Pombook75}, the existence of the limit as $z\to0$ along the imaginary axis implies existence of the angular limit of~$P$ at~$0$.

Consider the measure $\nu$ defined by
$$
\di\nu(t):=\frac{\,\ind_{[-1/e,\,1/e]}(t)}{\log(1/|t|)}\,\di t,\quad t\in\Real,
$$
where $\ind_{\!A}$ stands for the indicator function of a set~$A\subset\Real$. Clearly, $\int_\R|t|^{-1}\di \nu(t)=+\infty$.

Since $\nu$ is invariant w.r.t. the transformation $t\mapsto-t$, $\Re P(iy)=0$ for all~$y>0$. Therefore, with the help of the variable change $u:=t/y$, for any $y\in(0,1)$ we have
\begin{align*}
0\le \frac{P(iy)}{2i}&=\int\limits_{0}^{1/e}\frac{y\di t}{(t^2+y^2)\log(1/t)}=\int\limits_0^{1/(ey)}\frac{\di u}{(u^2+1)\log(1/(uy))}\\[1.5ex]
&=\int\limits_0^{1/(e\sqrt y)}\frac{\di u}{(u^2+1)\log(1/(uy))}~+\int\limits_{1/(e\sqrt y)}^{1/(ey)}\frac{\di u}{(u^2+1)\log(1/(uy))}\\[1.5ex]
&\le\frac1{1+\frac12\log(1/y)}\int\limits_0^{+\infty}\frac{\di u}{u^2+1}~+\int\limits_{1/(e\sqrt y)}^{+\infty}\frac{\di u}{u^2+1}.
\end{align*}
Both summands in the last line tend to zero as~$y\to0^+$. Hence $\lim_{y\to0^+}P(iy)=0\in\Real$ as desired.
\end{example}

\section{Extreme points and Krein--Milman Theorem}\label{S_KreinMilman}
Throughout this section, by $X$ we will denote a locally-convex Hausdorff topological linear space and
$\extr K$ will stand for the set of all extreme points of a set~$K\subset X$.

\begin{result}[Krein\,--\,Milman Theorem, see e.g. \protect{\cite[\S18.1.2]{Kadec}}]\label{TH_KreinMilman}
Let  $K\subset X$ be a non-empty convex compact set.
Then $\extr K\neq\emptyset$ and moreover, $K$ coincides with the closure of the convex hull of~$\extr K$.
\end{result}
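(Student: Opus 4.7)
My plan is to follow the classical two-step argument based on \emph{extremal subsets} and the Hahn\,--\,Banach Theorem. Call a non-empty closed set $S\subset K$ \emph{extremal in}~$K$ if, whenever $x,y\in K$ and $tx+(1-t)y\in S$ for some $t\in(0,1)$, one has $x,y\in S$. A point $x\in K$ belongs to $\extr K$ precisely when $\{x\}$ is extremal, so the goal reduces to producing singleton extremal subsets.

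To show $\extr K\neq\emptyset$, I would order the family of extremal subsets of~$K$ by reverse inclusion. The intersection of any chain is non-empty by compactness and is easily seen to remain extremal, so Zorn's Lemma yields a minimal extremal subset~$M\subset K$. I then argue that $M$ is a singleton: if two distinct points $x_1,x_2\in M$ existed, local convexity together with the Hausdorff property would, via Hahn\,--\,Banach, supply a continuous linear functional $\ell\colon X\to\Real$ with $\ell(x_1)\neq\ell(x_2)$. The set $M_\ell:=\{z\in M\colon \ell(z)=\max_{w\in M}\ell(w)\}$ is then a proper, non-empty, closed subset of~$M$ that is still extremal in~$K$ (any convex combination lying in $M_\ell$ first lies in $M$ by extremality of~$M$, and $\ell$-maximality then forces both endpoints into $M_\ell$), contradicting the minimality of~$M$.

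For the reverse inclusion $K\subset\overline{\mathrm{conv}}(\extr K)=:C$, I would argue by contradiction. Assume there exists $x_0\in K\setminus C$. Since $C$ is closed and convex, the geometric Hahn\,--\,Banach Theorem in the locally-convex space~$X$ yields a continuous linear functional $\ell$ and $\alpha\in\Real$ with $\ell(c)<\alpha<\ell(x_0)$ for every $c\in C$. Setting $K_\ell:=\{z\in K\colon \ell(z)=\max_{w\in K}\ell(w)\}$, compactness of~$K$ makes $K_\ell$ a non-empty compact convex subset of~$K$, and the same level-set argument shows $K_\ell$ is extremal in~$K$. Applying the first step to the compact convex set $K_\ell$ produces $y^*\in\extr K_\ell$; extremality of $K_\ell$ in~$K$ then upgrades this to $y^*\in\extr K\subset C$. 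But $\ell(y^*)\geq\ell(x_0)>\alpha$ while $\ell<\alpha$ on~$C$, a contradiction.

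The main obstacle is the first step---establishing that a minimal extremal subset reduces to a single point---since it is where the topological hypotheses on~$X$ (local convexity, Hausdorffness) and the compactness of~$K$ are simultaneously exploited: separation of points is needed to find a non-constant $\ell$, compactness is needed for the maximum of $\ell$ to be attained, and one must verify carefully that level sets of $\ell$ remain extremal not merely in $M$ but in the ambient set $K$. Once this is in hand, the second step follows essentially by a single application of the separation theorem combined with the first step applied to~$K_\ell$.
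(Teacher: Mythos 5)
The paper does not prove the Krein--Milman Theorem; it cites it as a known result from Kadets' textbook and uses it as a tool. Your argument is the classical proof of Krein--Milman, and it is correct in all its steps: the Zorn's-Lemma construction of a minimal extremal subset, the separation argument showing such a minimal set must be a singleton (where local convexity and the Hausdorff property guarantee a separating functional, compactness guarantees the maximum is attained, and the level set of the maximum is verified to be extremal in the ambient $K$), and the second application of Hahn--Banach separation plus the already-established existence of extreme points applied to $K_\ell$ to derive a contradiction. One small point worth making explicit: the strict separation of $x_0$ from the closed convex set $C$ uses that $\{x_0\}$ is compact, which is what licenses a functional with $\sup_C \ell < \alpha < \ell(x_0)$ rather than mere non-strict separation. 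Likewise the passage from ``$y^*$ is extreme in $K_\ell$'' to ``$y^*$ is extreme in $K$'' rests precisely on $K_\ell$ being extremal in $K$ (not just a subset), which you correctly note. Your proof matches the standard textbook treatment the paper points to.
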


\begin{remark}\label{RM_affine}
Let $X$ and $K$ be as in the Krein\,--\,Milman Theorem and $L:X\to\C^m$ a continuous affine map. Then the image $L(K)$ is clearly a compact convex set in $\C^m$. The preimage $L|_{K}^{-1}(z)$ of any extreme point~$z$ of~$L(K)$ is a face for~$K$ and hence, by the Krein\,--\,Milman Theorem, it contains an extreme point. In fact, $L|_{K}^{-1}(z)$ has to contain at least two distinct extreme points unless it is a singleton. Therefore, $\extr L(K)\subset L(\extr K)$. It follows that $L(K)$ coincides with the convex hull of $L(\extr K)$. Moreover, if $L$ is injective on $\extr K$, then each $z\in\extr L(K)$ has exactly one preimage w.r.t.~$L|_K$, which is, of course, an extreme point of~$K$.
\end{remark}

It is well known, see e.g. \cite[\S18.1.2]{Kadec}, that for any continuous linear real-valued functional~$L$, the maximum of~$L$ over a non-empty convex compact set~$K\subset X$ is attained at an extreme point of~$K$.
The standard argument can be easily adjusted to extend this assertion to upper-semicontinuous functionals. More precisely, the following theorem holds.
\begin{result}\label{TH_max-on-extreme}
Let $K\subset X$ be a non-empty convex compact set and let $L$ a (densely-defined) linear real-valued functional on~$X$. If $L$ is defined everywhere on~$K$ and $L|_K$ is upper-semicontinuous, then
$$
 \max_{p\in K}L(p)=\max_{p\in\extr K}L(p).
$$
Moreover, if the maximum of $L$ is attained on~$\extr K$ at a unique point~$x_0$, then the same holds for the whole set~$K$, i.e. $L(x)<L(x_0)$ for all~$x\in K\setminus\{x_0\}$.
\end{result}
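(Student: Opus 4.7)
The plan is to reduce to the standard Krein--Milman argument by replacing $K$ with the ``maximum face'' of $L$. First I would use upper-semicontinuity: since $L|_K$ is upper-semicontinuous on the compact set $K$, the supremum $M:=\sup_{x\in K}L(x)$ is attained, and the set
\[
K_0:=\{x\in K\colon L(x)\ge M\}=\{x\in K\colon L(x)=M\}
\]
is a non-empty closed (hence compact) subset of $K$. Because $L$ is linear and $K$ is convex, $K_0$ is convex.

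Next I would verify that $K_0$ is actually a \emph{face} of $K$: if $x\in K_0$ admits a representation $x=(1-t)y+tz$ with $y,z\in K$ and $t\in(0,1)$, then linearity gives $M=L(x)=(1-t)L(y)+tL(z)\le M$, and since both $L(y),L(z)\le M$, equality forces $L(y)=L(z)=M$, i.e.\ $y,z\in K_0$. Consequently $\extr K_0\subseteq\extr K$. Applying Theorem~\ref{TH_KreinMilman} to the non-empty compact convex set $K_0$ yields a point $x_*\in\extr K_0\subseteq\extr K$ with $L(x_*)=M$, establishing the equality of maxima.

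For the ``moreover'' part, suppose the maximum of $L$ over $\extr K$ is attained at a unique point $x_0$. By the previous paragraph, every extreme point of $K_0$ lies in $\extr K$ and must satisfy $L=M$, so it can only be $x_0$; thus $\extr K_0=\{x_0\}$. Applying Krein--Milman once more, $K_0$ equals the closed convex hull of $\{x_0\}$, namely $K_0=\{x_0\}$, and therefore $L(x)<L(x_0)$ for every $x\in K\setminus\{x_0\}$.

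I do not anticipate a serious obstacle: the only delicate point is the justification that $K_0$ is closed, which requires precisely the upper-semicontinuity hypothesis (the linear-continuous version of this theorem uses continuity of $L$ only here). The rest is the textbook face-argument plus a second application of Krein--Milman for the uniqueness statement.
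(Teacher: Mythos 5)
Your argument is correct: the set of maximizers $K_0$ is non-empty and compact precisely because $L|_K$ is upper-semicontinuous, it is a face of $K$ by linearity, and two applications of Krein--Milman give both the equality of the maxima and the uniqueness statement. This is exactly the ``standard argument easily adjusted to upper-semicontinuous functionals'' that the paper invokes without writing out, so your proposal matches the intended proof.
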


\begin{remark}\label{RM_Caratheodory-class}
We will apply the above results to various classes of holomorphic functions with positive real part. Among them is the \textit{Carath\'eodory class~$\Cara$,} which consists of all Herglotz functions~$p$ normalized by~$p(0)=1$. This class has been thoroughly studied. In particular, it is known that $\Cara$ is a convex compact subset of $\Hol(\UD)$ and its  extreme points form a one-parameter family, namely $\extr\Cara=\{q_\sigma\colon\sigma\in\UC\}$, $q_\sigma(z):={(\sigma-z)/(\sigma+z)}$ for all~$z\in\UD$, see e.g. \cite{Holland}.
\end{remark}

Krein\,--\,Milman Theorem applies to \textsl{compact} convex sets. We will have to study \textsl{non-compact} subclasses of the Carath\'eodory class~$\Cara$. Suitable extension of the Krein--Milman theory is given in~\cite{WW, Winkler}, where (infinite-dimensional) simplices of probability measures are considered instead of compact convex sets in a topological vector space. For our purposes, the very general setting of~\cite{WW, Winkler} is not necessary. The following two theorems are, in fact, corollaries of the indicated results for the special case of Borel probability measures on the unit circle~$\UC$. We denote the set of all such measures by $P$.

\begin{result}[\protect{\cite[Theorem 2.1 and Example~2.1]{Winkler}}]\label{TH_W-extr}
Fix $n\in\Natural$. Let $f_1,\ldots, f_n$ be real Borel functions on~$\UC$ and $c_1,\ldots,c_n\in\Real$. If $\mu_0$ is an extreme point of
$$
H_1:=\Big\{\mu\in P\colon \text{\small $f_j$ is $\mu$-integrable and $\int_{\UC} f_j(\sigma)\,\di\mu(\sigma)=c_j$ for all $j=1,\ldots,n$}\Big\}
$$
or an extreme point of
$$
H_2:=\Big\{\mu\in P\colon \text{\small $f_j$ is $\mu$-integrable and $\int_{\UC} f_j(\sigma)\,\di\mu(\sigma)\le c_j$ for all $j=1,\ldots,n$}\Big\},
$$
then $\mu_0$ is a convex combination of at most $n+1$ Dirac measures on~$\UC$.
\end{result}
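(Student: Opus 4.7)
The plan is to reduce the assertion to a dimension-counting argument on $L^\infty(\mu_0)$. Consider first the equality case and suppose $\mu_0\in\extr H_1$. I would look at the linear map $T\colon L^\infty(\mu_0,\R)\to\R^{n+1}$ given by
$$
T(g):=\Big(\int_{\UC} g\,\di\mu_0,\ \int_{\UC} g f_1\,\di\mu_0,\ \ldots,\ \int_{\UC} g f_n\,\di\mu_0\Big),
$$
which is well defined because $g$ is bounded and each $f_j$ is $\mu_0$-integrable by definition of $H_1$. The key claim is that $\ker T=\{0\}$.

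To establish this, take $g\in\ker T$, rescale so that $\|g\|_\infty\le 1/2$, and form the two Borel probability measures $\mu_\pm:=(1\pm g)\,\di\mu_0$. Each is non-negative by the bound on $\|g\|_\infty$, has total mass $1\pm\int g\,\di\mu_0=1$, and satisfies $\int f_j\,\di\mu_\pm=c_j\pm\int g f_j\,\di\mu_0=c_j$ for every $j$, so $\mu_\pm\in H_1$. Since $\mu_0=\tfrac12(\mu_++\mu_-)$ and $\mu_0$ is extreme, we conclude $\mu_+=\mu_-$, which forces $g=0$ $\mu_0$-a.e. Rank--nullity then gives $\dim L^\infty(\mu_0,\R)\le n+1$. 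But this dimension equals the cardinality of the support of $\mu_0$ when $\mu_0$ is purely atomic, and is infinite whenever $\mu_0$ has a non-atomic part; hence $\mu_0$ must be a convex combination of at most $n+1$ Dirac measures.

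For the inequality case, I would let $J\subseteq\{1,\ldots,n\}$ collect the indices with $\int f_j\,\di\mu_0=c_j$ (the active constraints), so $|J|\le n$. Repeating the construction of $\mu_\pm$ but requiring only $\int g\,\di\mu_0=0$ and $\int g f_j\,\di\mu_0=0$ for $j\in J$, a further rescaling $g\mapsto\varepsilon g$ with $\varepsilon>0$ sufficiently small keeps $\int f_j\,\di\mu_\pm<c_j$ for all $j\notin J$ thanks to the strict inequalities at $\mu_0$ (here one uses $g f_j\in L^1(\mu_0)$, valid because $g\in L^\infty(\mu_0)$ and $f_j\in L^1(\mu_0)$). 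Extremality of $\mu_0$ in $H_2$ then forces $g=0$ $\mu_0$-a.e., so the corresponding linear map into $\R^{|J|+1}$ has trivial kernel and $\mu_0$ is supported on at most $|J|+1\le n+1$ points.

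The only technical care required is in the inequality case, where the rescaling parameter $\varepsilon$ must be chosen small enough uniformly over the finitely many indices $j\notin J$ so that each $\int f_j\,\di\mu_\pm$ stays strictly below $c_j$; beyond this bookkeeping I do not expect a substantive obstacle, the whole argument being a standard moment-problem perturbation.
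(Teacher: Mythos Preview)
Your argument is correct and is the standard perturbation/dimension-count proof of this result (often attributed to Douglas and appearing in many treatments of generalized moment problems). There is nothing to compare against in the paper itself: the statement is quoted as a known result from \cite[Theorem~2.1 and Example~2.1]{Winkler} and the authors give no proof of their own, using it only as a black box in Sections~\ref{SS_valueReg-boundary} and~\ref{S_ValueRegions}. Your write-up could be tightened slightly in the $H_2$ case by combining the two rescalings into a single choice of $\varepsilon>0$ small enough that both $\varepsilon\|g\|_\infty\le 1/2$ and $|\varepsilon\int gf_j\,\di\mu_0|<c_j-\int f_j\,\di\mu_0$ hold for every inactive~$j$, but this is cosmetic.
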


\begin{result}[\protect{\cite[Theorem~3.2 and Proposition~3.1]{Winkler}}]\label{TH_W-est}
Let $H:=H_1$ or $H:=H_2$, where $H_1$ and $H_2$ are defined as in Theorem~\ref{TH_W-extr}. Let $g$ be a real Borel function on~$\UC$ such that for any~$\mu\in H$ the integral $$I(\mu):=\int_{\UC} g(\sigma)\,\di\mu(\sigma)$$ exists  with values in $[-\infty,+\infty]$. Then
\begin{equation}\label{EQ_W-est}
\sup\big\{I(\mu)\colon \mu\in H\big\}=\sup\big\{I(\mu)\colon \mu\in \extr H\big\}.
\end{equation}
\end{result}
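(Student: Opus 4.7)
The inequality $\sup_{\extr H} I \le \sup_H I$ is immediate since $\extr H \subset H$. For the reverse direction, it suffices to show that for every $\mu_0 \in H$ there exists $\nu \in \extr H$ with $I(\nu) \ge I(\mu_0)$; taking the supremum over $\mu_0$ then yields the claim. The cases $I(\mu_0) = \pm\infty$ would be handled by truncation: replacing $g$ by $g_N := \max(-N, \min(g, N))$ and letting $N \to \infty$ reduces everything to the finite case via monotone convergence.

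Assume $I(\mu_0) \in \R$. I would augment the constraint set by adjoining the linear inequality $\int_{\UC} g\,\di\mu \ge I(\mu_0)$, obtaining
\[
\widetilde H := \Bigl\{ \mu \in H : g_- \text{ is $\mu$-integrable and } \int_{\UC} g\,\di\mu \ge I(\mu_0) \Bigr\}.
\]
This is again a set of the type covered by Theorem~\ref{TH_W-extr}, now with $n+1$ constraints (the new one being $f_{n+1} := -g$ with bound $-I(\mu_0)$), so its extreme points are convex combinations of at most $n+2$ Dirac measures on $\UC$. Since $\mu_0 \in \widetilde H$, this set is non-empty; existence of extreme points follows by restricting to the compact finite-dimensional polytope obtained by fixing a candidate $(n+2)$-tuple of support points in $\UC$ and varying the weights over the standard simplex, then applying the Krein--Milman theorem (Theorem~\ref{TH_KreinMilman}).

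The key claim is that any $\nu \in \extr \widetilde H$ with $I(\nu) > I(\mu_0)$ strictly is automatically extreme in $H$. Indeed, if $\nu = \tfrac12(\nu_1 + \nu_2)$ with $\nu_1, \nu_2 \in H$ and $\nu_1 \ne \nu_2$, then setting $\mu_t := (1-t)\nu_1 + t\nu_2$ gives $I(\mu_t)$ affine in $t$ with $\mu_{1/2} = \nu$; since $I(\nu) > I(\mu_0)$, the points $\mu_{1/2 \pm \delta}$ remain in $\widetilde H$ for small $\delta > 0$, exhibiting $\nu$ as a nontrivial midpoint in $\widetilde H$ and contradicting its extremality there. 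The degenerate case in which $I(\nu) = I(\mu_0)$ for every $\nu \in \extr \widetilde H$ is handled by replacing the inequality constraint by the equality $\int_{\UC} g\,\di\mu = I(\mu_0)$, turning the level set into a genuine affine face $F$ of $H$; the standard face argument then gives $\extr F \subset \extr H$, and any $\nu \in \extr F$ satisfies $I(\nu) = I(\mu_0)$ as required.

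The principal obstacle is the absence of weak-$*$ compactness of $\widetilde H$: since the $f_j$ and $g$ are only Borel measurable, the defining constraints are not weak-$*$ closed, so neither $H$ nor $\widetilde H$ is compact in any natural topology on $P$. This obstacle is circumvented precisely by Theorem~\ref{TH_W-extr}, which guarantees that extreme points are automatically supported on at most $n+2$ points; this reduces the existence of extreme points to a finite-dimensional Carath\'eodory-type problem on the moment cone, where classical Krein--Milman applies without difficulty.
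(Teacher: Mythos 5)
First, note that the paper does not prove this statement at all: it is quoted verbatim from Winkler (Theorem~3.2 and Proposition~3.1), whose proof rests on the von Weizs\"acker--Winkler integral representation for non-compact moment sets. So your attempt has to stand on its own, and it has two genuine gaps, both at the points where non-compactness actually bites. The first is the existence of extreme points of $\widetilde H$. Theorem~\ref{TH_W-extr} only constrains what an extreme point must look like \emph{if it exists}; it gives no existence statement, so it does not ``circumvent'' the lack of compactness in the way your closing paragraph asserts. Fixing an $(n+2)$-tuple of support points and applying Krein--Milman to the resulting weight simplex produces an extreme point of that finite-dimensional slice, not of $\widetilde H$: a vertex of a slice of a convex set is in general not extreme in the ambient set. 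For sets of \emph{measures} this can be repaired, because if a finitely supported $\nu$ equals $\tfrac12(\nu_1+\nu_2)$ then $\nu_1,\nu_2\ll\nu$ and hence are supported on the same finite set, so a vertex of the polytope $\{\mu\in\widetilde H\colon \mathrm{supp}\,\mu\subset S\}$ \emph{is} extreme in $\widetilde H$ --- but you never make this absolute-continuity argument, and even with it you must first know that $\widetilde H$ contains \emph{some} finitely supported measure, which is a Richter/Tchakaloff-type reduction that is neither quoted in the paper nor supplied by you (and is not a consequence of Theorem~\ref{TH_W-extr}).

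The second gap is the degenerate case. The set $F:=\{\mu\in H\colon \int_{\UC} g\,\di\mu=I(\mu_0)\}$ is a face of $H$ only when $I(\mu_0)$ is the maximum (or minimum) of $I$ over $H$: in general a point of $F$ can be the midpoint of $\mu_1,\mu_2\in H$ with $I(\mu_1)>I(\mu_0)>I(\mu_2)$, so the inclusion $\extr F\subset\extr H$ is unjustified. At the moment you invoke it you do not yet know that $I(\mu_0)=\sup_H I$ --- that is essentially the statement being proved --- so the argument is circular exactly where it matters, and the degenerate case is left unhandled. (Your ``key claim'' for the strict-inequality case is correct, and the minor issues --- the two-sided truncation, which should be one-sided $g_N:=\min(g,N)$ to keep $I(\nu)\ge I_N(\nu)$ for the candidates $\nu$, and the fact that for $H=H_1$ the augmented set mixes equalities and inequalities and so is not literally of the form $H_1$ or $H_2$ --- are repairable.) A route that avoids both gaps, once a finitely supported $\nu\in H$ with $I(\nu)\ge c$ is in hand, is to maximize $I$ over the compact polytope of measures in $H$ supported in $\mathrm{supp}\,\nu$ at one of its vertices $w$; the absolute-continuity observation then gives $w\in\extr H$ with $I(w)\ge c$, with no need for $\widetilde H$ at all. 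But the finitely supported $\nu$ still has to come from a Tchakaloff-type theorem, which is the missing ingredient in your proposal.
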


\section{Infinitesimal generators of one-parameter semigroups with given boundary regular fixed points}
\label{S_ValueRegions}
Let $\Gen$ stand for the class of all infinitesimal generators in~$\UD$. For $G\in\Gen$, we denote by~$(\phi_t^G)$ the corresponding one-parameter semigroup in~$\UD$. We will write~$G\in\GenDW$ to specify that $\tau$ is the DW-point of~$(\phi_t^G)$, adopting the useful convention that the trivial generator~$G\equiv 0$ belongs to~$\GenDW$ for any~$\tau\in\overline\UD$.  Moreover,  $\lambda(G)$ stands for the spectral value of~$(\phi^G_t)$ at its DW-point~$\tau$.

As before, let $F:={\{\sigma_1,\sigma_2,\ldots,\sigma_n\}} {\subset\UC}$, with $\sigma_j\neq\sigma_k$
for~$j\neq k$.  For $\tau\in\overline\UD\setminus F$ and $\Lambda:=(\lambda_1,\lambda_2,\ldots,\lambda_n)\in(-\infty,0)^n$, denote by $\GenDW(F,\Lambda)$ the class of all infinitesimal generators $G\in\GenDW$ such that $(\phi_t^G)$ has BRFPs~$\sigma_1,\ldots,\sigma_n$ with the spectral values $\lambda'_1,\ldots,\lambda'_n$, respectively, subject to the inequalities $\lambda_k\le\lambda'_k\le0$, $k=1,\ldots,n$. Note that if at least one of $\lambda'_k$'s vanishes, then $G\equiv0$. Finally, let us denote by $\GenDWe(F,\Lambda)$ the subclass of $\GenDW(F,\Lambda)$ in which the spectral values are \textsl{exactly} $\lambda_1,\ldots,\lambda_n$.

For $F$ and $\Lambda$ introduced above and $\tau\in\overline\UD\setminus F$, we denote
\begin{equation}\label{EQ_p0-alpha_k}
p_0(z;F,\Lambda):=\sum\limits_{k=1}^n\alpha_k\dfrac{\sigma_k+z}{\sigma_k-z},\quad\text{where~}~ \alpha_k:=\frac{|\tau-\sigma_k|^2}{2|\lambda_k|},~k=1,\ldots,n.
\end{equation}
Furthermore, we will write $\interior A$ for the interior of a set~$A$.

\subsection{Representation formula}\label{SS_representation}
The following theorem establishes a representation formula for the classes~$\GenDW(F,\Lambda)$ and $\GenDWe(F,\Lambda)$.
\begin{theorem}\label{TH_representation-formula}
For $F$, $\tau$, and $\Lambda$ introduced above, the following two statements hold.
\begin{itemize}
\item[(A)] A function $G:\UD\to\Complex$ belongs to~$\GenDW(F,\Lambda)\setminus\{G\equiv0\}$ if and only if there exists a Herglotz function~$p$ such that
\begin{equation}\label{EQ_representation-formula}
G(z)=\frac{(\tau-z)(1-\overline\tau z)}{p(z)+p_0(z;F,\Lambda)}\quad \text{for all~$z\in\UD$},
\end{equation}
where $p_0$ is given by~\eqref{EQ_p0-alpha_k}.\vskip1ex

\item[(B)] Similarly, $G:\UD\to\Complex$ belongs to~$\GenDWe(F,\Lambda)$ if and only if~\eqref{EQ_representation-formula} holds with some Herglotz function $p$ satisfying $p\plw(\sigma_k)=0$ for all~$k=1,\ldots,n$.
\end{itemize}
\end{theorem}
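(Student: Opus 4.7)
The plan is to reduce the BRFP conditions to measure-theoretic conditions on the Riesz--Herglotz measure of a single Herglotz function naturally attached to~$G$. By the Berkson--Porta theorem (Theorem~\ref{TH_BP}), a non-trivial $G\in\GenDW$ can be written as $G(z)=(\tau-z)(1-\overline\tau z)P(z)$ with $P\in\Hol(\UD,\overline\UH\setminus\{0\})$. Since $P$ has no zeros in~$\UD$ and $\Re(1/P)=\Re P/|P|^2\ge 0$, the reciprocal $\pi:=1/P$ is again a non-trivial Herglotz function. I would thus recast~\eqref{EQ_representation-formula} as the pointwise identity $\pi=p+p_0$ for some Herglotz~$p$, reducing the whole theorem to a statement about Herglotz functions with prescribed boundary behaviour.

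The BRFP data at~$\sigma_k$ translates into data about $\pi\plw(\sigma_k)$. Using the elementary identity $(\tau-\sigma_k)(1-\overline\tau\sigma_k)=-\sigma_k|\tau-\sigma_k|^2$ (a consequence of $\overline\tau-\overline\sigma_k=-\overline\sigma_k(1-\overline\tau\sigma_k)$) together with $(\sigma_k-z)\pi(z)=\sigma_k(1-\overline\sigma_k z)\pi(z)\to\sigma_k\pi\plw(\sigma_k)$ from Remark~\ref{RM_p-pole-zero}, I would combine Theorem~\ref{TH_BRFP-withPommerenke} with
\[
\frac{G(z)}{\sigma_k-z}=\frac{(\tau-z)(1-\overline\tau z)}{(\sigma_k-z)\pi(z)}
\]
to conclude that $\sigma_k$ is a BRFP of $(\phi_t^G)$ if and only if $\pi\plw(\sigma_k)>0$, with spectral value $\lambda'_k=-|\tau-\sigma_k|^2/\pi\plw(\sigma_k)$. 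In particular, $\lambda_k\le\lambda'_k<0$ iff $\pi\plw(\sigma_k)\ge 2\alpha_k$, and equality $\lambda'_k=\lambda_k$ iff $\pi\plw(\sigma_k)=2\alpha_k$.

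The key measure-theoretic input is the atom-detection formula $\pi\plw(\sigma)=2\mu(\{\sigma\})$ for any Herglotz~$\pi$ with Riesz--Herglotz measure~$\mu$ and any $\sigma\in\UC$. I would verify this by evaluating
\[
\lim_{r\to 1^-}(1-r)\int_\UC\frac{\varsigma+r\sigma}{\varsigma-r\sigma}\,\di\mu(\varsigma);
\]
since $|\varsigma-r\sigma|\ge 1-r$ bounds the modulus of the integrand by~$2$ and the pointwise limit equals $2\cdot\mathbf{1}_{\{\sigma\}}(\varsigma)$, dominated convergence supplies the formula. This is a close variant of the argument used in the proof of Lemma~\ref{LM_Sarason}.

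Putting these pieces together, $G\in\GenDW(F,\Lambda)\setminus\{G\equiv 0\}$ iff $\mu(\{\sigma_k\})\ge\alpha_k$ for every~$k$. Whenever this holds, I would decompose $\mu=\mu'+\sum_{k=1}^n\alpha_k\delta_{\sigma_k}$ with $\mu'\ge 0$ and define $p$ to be the Herglotz function whose Riesz--Herglotz measure is~$\mu'$ and whose imaginary part at the origin equals $\Im\pi(0)$. Since $p_0$ has Riesz--Herglotz measure $\sum_k\alpha_k\delta_{\sigma_k}$ and vanishing imaginary constant, one obtains $\pi=p+p_0$, which gives~\eqref{EQ_representation-formula}. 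The converse is automatic: for any Herglotz~$p$, the sum $\pi:=p+p_0$ is Herglotz and satisfies $\pi\plw(\sigma_k)=p\plw(\sigma_k)+2\alpha_k\ge 2\alpha_k$, so the associated $G$ lies in $\GenDW(F,\Lambda)$. For part~(B), applying the atom-detection formula to~$p$ gives $p\plw(\sigma_k)=2\mu'(\{\sigma_k\})$, so $p\plw(\sigma_k)=0$ exactly when $\mu(\{\sigma_k\})=\alpha_k$, i.e.\ the equality case $\lambda'_k=\lambda_k$ identified in the second paragraph. The main technical subtlety is the atom-detection formula of paragraph three; once it is in hand, the rest is bookkeeping with the algebraic identity for $(\tau-\sigma_k)(1-\overline\tau\sigma_k)$ and with the uniqueness of the Riesz--Herglotz decomposition.
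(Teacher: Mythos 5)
Your argument is correct, and it diverges from the paper's proof in the decisive step. The first half is the same in both treatments: via Berkson--Porta you pass to $\pi=1/P$ and use Theorem~\ref{TH_BRFP-withPommerenke} together with the identity $(\tau-\sigma_k)(1-\overline\tau\sigma_k)=-\sigma_k|\tau-\sigma_k|^2$ to convert the BRFP/spectral-value hypotheses into $\pi\plw(\sigma_k)\ge 2\alpha_k$, with equality characterizing $\lambda'_k=\lambda_k$; this is exactly the reduction the paper performs. The difference lies in how the decomposition $\pi=p+p_0(\cdot\,;F,\Lambda)$ is produced. The paper works entirely at the level of functions: it applies Lemma~\ref{LM_Julia} (the half-plane Julia splitting $f(\zeta)=f'(\infty)\zeta+g(\zeta)$, transported to the disk) repeatedly at $\sigma_1,\ldots,\sigma_n$, peeling off the terms $\beta_k\frac{\sigma_k+z}{\sigma_k-z}$ with $\beta_k=\tfrac12\pi\plw(\sigma_k)\ge\alpha_k$ and then reabsorbing the excesses $\beta_k-\alpha_k$ into~$p$. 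You instead pass to the Riesz--Herglotz measure $\mu$ of~$\pi$ via~\eqref{EQ_HerglotzRepresentation} and prove the atom-detection identity $\pi\plw(\sigma)=2\mu(\{\sigma\})$ by dominated convergence (legitimate, since Remark~\ref{RM_p-pole-zero} guarantees the angular limit exists, so the radial computation identifies its value); the decomposition then becomes the measure-theoretic splitting $\mu=\mu'+\sum_k\alpha_k\delta_{\sigma_k}$, and the equality case in part~(B) is read off from $p\plw(\sigma_k)=2\mu'(\{\sigma_k\})$. Your route requires one extra fact (the atom formula, essentially the Cayley-transformed content of Lemma~\ref{LM_Julia} and a relative of Lemma~\ref{LM_Sarason}), but in exchange it makes the additivity $\pi\plw(\sigma_k)=p\plw(\sigma_k)+2\alpha_k$ and the dichotomy between strict inequality and equality completely transparent, and it meshes naturally with the measure-based arguments used later in the paper (e.g.\ in the proofs of Theorem~\ref{TH_extereme-points-bis} and Proposition~\ref{PR_in-Cara-class}); the paper's route is more self-contained, needing only Julia's Lemma in the half-plane and no appeal to the uniqueness of the Riesz--Herglotz measure.
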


\noindent Originally, the representation formula~\eqref{EQ_representation-formula} is due to Goryainov, see \cite[Theorem~3]{Goryainov-survey}. We present here a quite different proof because our version of this important result is more precise. Namely, we state explicitly the relation between the parameters $\lambda_k$'s in the r.h.s. of~\eqref{EQ_representation-formula} and the spectral values of~$G$, which is not mentioned in~\cite{Goryainov-survey}. The latter aspect is important for the rest of Sect.\,\ref{S_ValueRegions}.
In our proof, we will use the following lemma.

\begin{lemma}\label{LM_Julia}
Let $q$ be a Herglotz function and $\sigma\in\UC$. Then
$$
 q(z):=p(z)\,+\,\frac{q\plw(\sigma)}{2}\,\frac{\sigma+z}{\sigma-z}
$$
for all $z\in\UD$ and some Herglotz function~$p$ with~$p\plw(\sigma)=0$.
\end{lemma}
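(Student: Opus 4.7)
The plan is to reduce to the half-plane via a Cayley transform and then invoke Theorem~\ref{TH_Julia-half-plane} (Julia's Lemma in~$\UH$). Concretely, I would set $\zeta(z):=(\sigma+z)/(\sigma-z)$, which maps $\UD$ conformally onto~$\UH$ and sends~$\sigma$ to~$\infty$, and define $f(\zeta):=q(z(\zeta))$, where $z(\zeta)=\sigma(\zeta-1)/(\zeta+1)$ is the inverse. Since $q$ is a Herglotz function, $f\in\Hol(\UH,\overline\UH)$.

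The next step is to identify $f'(\infty)$ in terms of $q\plw(\sigma)$. A direct computation gives $1-\overline\sigma z(\zeta)=2/(\zeta+1)$, so
\[
(1-\overline\sigma z)\,q(z)\;=\;\frac{2f(\zeta)}{\zeta+1}.
\]
Letting $z\to\sigma$ angularly corresponds to $\zeta\to\infty$ angularly in~$\UH$, hence
\[
q\plw(\sigma)\;=\;\anglim_{\zeta\to\infty}\frac{2f(\zeta)}{\zeta+1}\;=\;2f'(\infty).
\]
By Theorem~\ref{TH_Julia-half-plane}, $f(\zeta)=f'(\infty)\,\zeta+g(\zeta)$ for some $g\in\Hol(\UH,\overline\UH)$ with $g'(\infty)=0$. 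Translating back via~$\zeta=(\sigma+z)/(\sigma-z)$ yields
\[
q(z)\;=\;\frac{q\plw(\sigma)}{2}\,\frac{\sigma+z}{\sigma-z}\;+\;g\!\left(\frac{\sigma+z}{\sigma-z}\right).
\]

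Finally, I would set $p(z):=g\bigl((\sigma+z)/(\sigma-z)\bigr)$. Because $g$ has non-negative real part on~$\UH$ and $\zeta(\cdot)$ maps $\UD$ into~$\UH$, $p$ is a Herglotz function; the same change-of-variable computation as above, together with $g'(\infty)=0$, gives $p\plw(\sigma)=2g'(\infty)=0$. No step here is really difficult; the only mildly delicate point is the bookkeeping of the angular limit and the identification $q\plw(\sigma)=2f'(\infty)$, which must be stated carefully so the application of Theorem~\ref{TH_Julia-half-plane} is transparent.
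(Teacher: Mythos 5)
Your proposal is correct and is essentially the paper's own proof: the paper likewise applies Theorem~\ref{TH_Julia-half-plane} to $f(\zeta):=q\big(\sigma\tfrac{\zeta-1}{\zeta+1}\big)$ and notes $f'(\infty)=q\plw(\sigma)/2$, exactly the identification you carry out via $1-\overline\sigma z(\zeta)=2/(\zeta+1)$. You merely spell out the change of variables and the definition of $p$ more explicitly than the paper does.
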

\begin{proof}
Apply Theorem~\ref{TH_Julia-half-plane}\, for $f(\zeta):=q\big(\sigma\tfrac{\zeta-1}{\zeta+1}\big)$, $\zeta\in\UH$, and notice that ${f'(\infty)=q\plw(\sigma)/2}$.
\end{proof}

\begin{proof}[{\fontseries{bx}\selectfont Proof of Theorem \ref{TH_representation-formula}}]
Suppose first that $G$ is given by~\eqref{EQ_representation-formula} with some Herglotz function $p$. Since $p_0(\cdot;F,\Lambda)$ is a non-trivial Herglotz function, $p_*(z):=\big(p(z)+p_0(z;F,\Lambda)\big)^{-1}$, $z\in\UD$, is also a Herglotz function.
Thanks to the Berkson\,--\,Porta formula~\eqref{EQ_BP-formula}, it follows that $G\in\GenDW$. Therefore, according to Theorem~\ref{TH_BRFP-withPommerenke}, in order to show that $G\in\GenDW(F,\Lambda)$, we have to check that
\begin{equation}\label{EQ_G-lim}
\anglim_{z\to\sigma_k}\frac{G(z)}{z-\sigma_k}\le|\lambda_k|,\quad k=1,\ldots,n.
\end{equation}
By means of elementary computations we see that limit in the left hand side exists finitely as long as $p_*\zrw(\sigma_k)$ is finite and that
$$
\anglim_{z\to\sigma_k}\frac{G(z)}{z-\sigma_k}=|\tau-\sigma_k|^2 p_*\zrw(\sigma_k)=\frac{|\tau-\sigma_k|^2}{p\plw(\sigma_k)+2\alpha_k}=\frac{|\lambda_k|}{1+p\plw(\sigma_k)/(2\alpha_k)}.
$$
Hence we may conclude that $G\in\GenDW(F,\Lambda)$, and clearly $G\not\equiv0$. Moreover, if $p\plw(\sigma_k)=0$, $k=1,\ldots,n$, then we see that $G\in\GenDWe(F,\Lambda)$.

To prove the converse statements, suppose that $G\in\GenDW(F,\Lambda)$ and that $G\not\equiv0$. Then by the Berkson\,--\,Porta formula~\eqref{EQ_BP-formula},  $G(z)=(\tau-z)(1-\overline\tau z)p_*(z)$ for all~$z\in\UD$, where $p_*$ is a Herglotz function. Clearly, $p_*\not\equiv 0$ and hence $q:=1/p_*$ is also a Herglotz function.

By Theorem~\ref{TH_BRFP-withPommerenke}, $G$ satisfies condition~\eqref{EQ_G-lim}, meaning in particular that the limit in the left hand side  exists finitely. It follows that $q\plw(\sigma_k)\ge 2\alpha_k$, with the equalities occurring for all~$k=1,\ldots,n$ if~$G\in\GenDWe(F,\Lambda)$.

Note also that for $q_\sigma(z):=(\sigma+z)/(\sigma-z)$, $\sigma\in\UC$, we have $q_\sigma\plw(\sigma')=0$ for all ${\sigma'\in\UC\setminus\{\sigma\}}$. Taking this into account, we can apply Lemma~\ref{LM_Julia} repeatedly to obtain
$$
 q(z)=p_1(z)+\sum_{k=1}^n\beta_k\frac{\sigma_k+z}{\sigma_k-z}\quad\text{for all $z\in\UD$},
$$
where $\beta_k:=q\plw(\sigma_k)/2\ge\alpha_k$ and $p_1$ is a Herglotz function such that $p_1\plw(\sigma_k)=0$ for all~$k=1,\ldots,n$. As a result we have
$q(z)=p(z)+p_0(z;F,\Lambda)$, where $p(z):=p_1(z)+\sum_{k=1}^n(\beta_k-\alpha_k)(\sigma_k+z)/(\sigma_k-z)$ is a Herglotz function, which coincides with~$p_1$ provided $G\in\GenDWe(F,\Lambda)$. This immediately leads to representation~\eqref{EQ_representation-formula} and completes the proof of~(A) and~(B).
\end{proof}

\begin{corollary}\label{CR_compact}
The class $\GenDW(F,\Lambda)$ is a compact convex subset of~$\Hol(\UD)$. The class $\GenDWe(F,\Lambda)$ is a convex dense subset of~$\GenDW(F,\Lambda)$.
\end{corollary}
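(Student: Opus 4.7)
My plan is to read Theorem~\ref{TH_representation-formula} as a parametrisation of $\GenDW(F,\Lambda)\setminus\{0\}$ by the Herglotz class via the continuous map $p\mapsto G[p]$, where
$$G[p](z):=\frac{(\tau-z)(1-\overline\tau z)}{p(z)+p_0(z;F,\Lambda)},$$
and to note that $\GenDWe(F,\Lambda)$ corresponds under this parametrisation precisely to those $p$ satisfying $p\plw(\sigma_k)=0$ for all~$k$. Each of the three assertions then reduces to a property of the Herglotz cone together with a property of the boundary functionals $p\plw(\sigma_k)$. The step I expect to be most delicate is closedness in the compactness argument, because it requires transferring a one-sided constraint of the form $p\plw(\sigma_k)\geq 2\alpha_k$ across a locally uniform limit; here the upper semicontinuity supplied by Remark~\ref{RM_semicont} is exactly what we need.

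For convexity, the Berkson--Porta factor $(\tau-z)(1-\overline\tau z)$ is determined by $\tau$ alone, so $G\in\GenDW$ iff $G/[(\tau-z)(1-\overline\tau z)]$ is a Herglotz function (with the convention that the zero function is admitted), and the Herglotz class is a convex cone in $\Hol(\UD)$. By Theorem~\ref{TH_BRFP-withPommerenke}, the spectral value $\lambda'_k(G)$ is the angular limit of $G(z)/(\sigma_k-z)$, hence depends linearly on $G$; so the inequalities $\lambda_k\leq\lambda'_k\leq 0$ (respectively, the equalities $\lambda'_k=\lambda_k$) survive convex combinations. This yields convexity of both $\GenDW(F,\Lambda)$ and $\GenDWe(F,\Lambda)$.

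For compactness, Montel's theorem applies via the pointwise bound $|p+p_0|\geq\Re(p+p_0)\geq\Re p_0>0$ on $\UD$: the elementary identity $\Re p_0(z;F,\Lambda)=\sum_k\alpha_k(1-|z|^2)/|\sigma_k-z|^2$ is bounded below by a positive constant on any compact subset of~$\UD$, so the representation formula gives $|G(z)|\leq|(\tau-z)(1-\overline\tau z)|/\Re p_0(z;F,\Lambda)$ uniformly in $G\in\GenDW(F,\Lambda)$. For closedness, suppose $G_n\to G$ locally uniformly with $G_n\in\GenDW(F,\Lambda)$. The case $G\equiv 0$ is immediate, so assume $G\not\equiv 0$, whence $G_n\not\equiv 0$ for all large~$n$. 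Setting $q_n:=(\tau-z)(1-\overline\tau z)/G_n$, we have $q_n\to q:=(\tau-z)(1-\overline\tau z)/G$ locally uniformly in~$\UD$; since the pointwise condition $\Re\geq 0$ is preserved, $q$ is Herglotz. The computation at the core of the proof of Theorem~\ref{TH_representation-formula} gives $q_n\plw(\sigma_k)\geq 2\alpha_k$, and upper semicontinuity of $q\mapsto q\plw(\sigma_k)$ (Remark~\ref{RM_semicont}) yields $q\plw(\sigma_k)\geq\limsup_n q_n\plw(\sigma_k)\geq 2\alpha_k$. Hence $G\in\GenDW(F,\Lambda)$ by Theorem~\ref{TH_representation-formula}(A).

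Density of $\GenDWe(F,\Lambda)$ in $\GenDW(F,\Lambda)$ is obtained by the classical dilation trick. For $G=G[p]\in\GenDW(F,\Lambda)\setminus\{0\}$, put $p_r(z):=p(rz)$, $r\in(0,1)$. Each $p_r$ is a Herglotz function extending holomorphically to a neighbourhood of~$\overline\UD$, so $(1-\overline\sigma_k z)p_r(z)\to 0$ as $z\to\sigma_k$, giving $p_r\plw(\sigma_k)=0$ for every~$k$. Thus $G[p_r]\in\GenDWe(F,\Lambda)$ by Theorem~\ref{TH_representation-formula}(B), and $G[p_r]\to G$ locally uniformly as $r\to 1^-$, since $p_r\to p$ uniformly on compact sets while $p_r+p_0$ stays uniformly bounded away from zero there. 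The remaining case $G\equiv 0$ is handled by taking $p$ to be a positive real constant~$M$ (which lies in the Herglotz class with $M\plw\equiv 0$ on~$\UC$) and letting $M\to+\infty$, so that $G[M]\in\GenDWe(F,\Lambda)$ and $G[M]\to 0$ locally uniformly.
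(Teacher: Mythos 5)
Your proof is correct, and for convexity and density it is essentially the paper's own argument: the paper uses the same dilation trick (with $p(r_nz)$, $r_n=1-1/n$), and your explicit treatment of $G\equiv0$ via large positive constants $M$ fills in a degenerate case the paper leaves implicit. The compactness step, however, is organized differently. The paper argues by subsequence extraction on the Herglotz parameters: writing $G_n=(\tau-z)(1-\overline\tau z)/\big(p_n+p_0(\cdot\,;F,\Lambda)\big)$, it uses that the $p_n$ form a normal family in the extended sense, so along a subsequence $p_n\to p_*$ with $p_*$ either a Herglotz function or $\equiv\infty$; the latter case gives $G_n\to0\in\GenDW(F,\Lambda)$, and the former gives a limit of the same form, which lies in the class directly by Theorem~\ref{TH_representation-formula}\,(A) --- no semicontinuity of boundary functionals is needed, because the constraint is built into the parametrization. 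You instead prove local boundedness of the whole class via the uniform estimate $|G(z)|\le|(\tau-z)(1-\overline\tau z)|/\Re p_0(z;F,\Lambda)$ (a pleasant explicit bound not stated in the paper) and then closedness via the reciprocals $q_n:=(\tau-z)(1-\overline\tau z)/G_n$ and the upper semicontinuity of $q\mapsto q\plw(\sigma_k)$ from Remark~\ref{RM_semicont}.

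That route works, but two steps deserve to be written out. First, the claim that $q_n\to q:=(\tau-z)(1-\overline\tau z)/G$ locally uniformly presupposes that $q$ is holomorphic on all of~$\UD$, i.e.\ that the limit $G$ is zero-free in $\UD\setminus\{\tau\}$ and, when $\tau\in\UD$, vanishes only to first order at~$\tau$; this is true, but it requires Hurwitz's theorem applied to the zero-free functions $G_n$ on $\UD\setminus\{\tau\}$ together with a minimum-principle argument for the limit of $G_n/\big((\tau-z)(1-\overline\tau z)\big)$, and is not automatic from locally uniform convergence of the $G_n$ alone. Second, concluding ``$G\in\GenDW(F,\Lambda)$ by Theorem~\ref{TH_representation-formula}\,(A)'' from $q\plw(\sigma_k)\ge2\alpha_k$ needs the decomposition $q=p+p_0(\cdot\,;F,\Lambda)$ with $p$ a Herglotz function, i.e.\ another application of Lemma~\ref{LM_Julia} as in the converse half of the theorem's proof; alternatively one can conclude directly from Theorem~\ref{TH_BRFP-withPommerenke}, since $q\plw(\sigma_k)\ge2\alpha_k$ yields $\anglim_{z\to\sigma_k}G(z)/(z-\sigma_k)=|\tau-\sigma_k|^2/q\plw(\sigma_k)\le|\lambda_k|$. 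With these two clarifications your argument is complete.
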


\begin{proof}
The convexity of $\GenDW(F,\Lambda)$ and $\GenDWe(F,\Lambda)$ follow easily from the Berkson\,--\,Porta representation~\eqref{EQ_BP-formula} and Theorem~\ref{TH_BRFP-withPommerenke}.

To prove that  $\GenDW(F,\Lambda)$ is compact, consider a sequence $(G_n)$ contained in this class. Clearly, passing to a subsequence, we may suppose that $G_n\not\equiv0$ for all~$n\in\Natural$. By Theorem~\ref{TH_representation-formula}\,(A) there exists a sequence of Herglotz functions~$(p_n)$ such that $G_n(z)=(\tau-z)(1-\overline\tau z)/\big(p_n(z)+p_0(z;F,\Lambda)\big)$ for all~$z\in\UD$ and all~$n\in\Natural$. Since Herglotz functions form a normal family in~$\UD$, passing to a subsequence we may suppose that $p_n\to p_*$ locally uniformly in~$\UD$ as $n\to+\infty$, where $p_*$ is either a Herglotz function or~$p_*\equiv\infty$. In the latter case, $G_n\to0$ locally uniformly in~$\UD$ as $n\to+\infty$. Note that by the very definition $G_*(z):=0$, $z\in\UD$, belongs to $\GenDW(F,\Lambda)$.  In the former case, $$G_n(z)\to G_*(z):=(\tau-z)(1-\overline\tau z)/\big(p_*(z)+p_0(z;F,\Lambda)\big)$$ locally uniformly in~$\UD$. By Theorem~\ref{TH_representation-formula}\,(A), $G_*\in \GenDW(F,\Lambda)$. Thus, every sequence $(G_n)\subset\GenDW(F,\Lambda)$ has a subsequence converging in~$\Hol(\UD)$ to an element of~$\GenDW(F,\Lambda)$, i.e. $\GenDW(F,\Lambda)$ is compact.

It remains to show that $\GenDWe(F,\Lambda)$ is dense in $\GenDW(F,\Lambda)$. To this end, fix $G\in\GenDW(F,\Lambda)$ and write, using Theorem~\ref{TH_representation-formula}\,(A), $G(z)=(\tau-z)(1-\overline\tau z)/\big(p(z)+p_0(z;F,\Lambda)\big)$. By Theorem~\ref{TH_representation-formula}\,(B), for any~$n\in\Natural$ the function
$$
G_n(z):=(\tau-z)(1-\overline\tau z)/\big(p(r_n z)+p_0(z;F,\Lambda)\big),\quad r_n:=1-1/n,
$$
belongs to~$\GenDWe(F,\Lambda)$. Clearly, $G_n\to G$ locally uniformly in~$\UD$. The proof is complete.
\end{proof}

\subsection{Elliptic semigroups}\label{SS_valueReg-ellipt}
Combining the representation for~$\GenDW(F,\Lambda)$ established in the previous section with the Krein\,--\,Milman Theory, see Sect.\,\ref{S_KreinMilman}, we are going to study this class quantitatively.
First we consider the case~$\tau\in\UD$. Using the notation introduced at the beginning of Sect.\,\ref{S_ValueRegions}, we can state our results as follows.

\begin{theorem}\label{TH_value-region-interior}
Let $\tau\in\UD\setminus\{0\}$. The value region
$$
V_\tau(F,\Lambda):=\Big\{\big(G(0),\lambda(G)\big)\in\C^2\colon G\in\GenDW(F,\Lambda)\Big\}
$$
coincides with the set $$
 \Big\{(\zeta,\omega)\in\Complex^2\colon\zeta\in Z,\,\omega\in\Omega_\zeta\Big\},\quad\text{where~}
  ~Z:=\Big\{\zeta\in\C\colon \big|{\textstyle\big(2\tau^{-1}\sum\limits_{k=1}^n\alpha_k}\big)\,\zeta\,-\,1\big|\le1\Big\}
$$
and $\,\Omega_\zeta$ is the closed disk (which degenerates to a point when $\zeta\in\partial Z$) given by
\begin{gather}\label{EQ_Omega(zeta)}
\Omega_\zeta:=\Big\{\omega\in\C\colon\displaystyle\Big|\frac{1-|\tau|^2}{\omega}-a_\zeta\Big|\le \frac{2\,|\tau|}{1-|\tau|^2}\Re \ell_\zeta\Big\},\\\notag
\ell_\zeta:=\frac{\tau}{\zeta}-{\textstyle\sum\limits_{k=1}^n\alpha_k},\quad
\displaystyle a_\zeta:=\frac{1+|\tau|^2}{1-|\tau|^2}\Re \ell_\zeta+i\Im \ell_\zeta + p_0(\tau;F,\Lambda),
\end{gather}
for all $\zeta\in Z$ except for $\zeta=0$, in which case $\Omega_\zeta=\Omega_0:=\{0\}$.
\vskip.75ex

Each boundary point of~$V_\tau(F,\Lambda)$ is delivered by a unique $G\in\GenDW(F,\Lambda)$ of the form
\begin{gather}
G(z)=G_{\zeta,\sigma}(z):=\frac{(\tau-z)(1-\overline\tau z)}%
{\displaystyle \frac{\sigma+z}{\sigma-z}\Re \ell_\zeta+i\Im \ell_\zeta+ p_0(z;F,\Lambda)},\quad\zeta\in\interior Z,~\sigma\in\UC,\label{EQ_G-zeta,sigma}\\
\intertext{or} G(z)=G_\zeta(z):=\frac{(\tau-z)(1-\overline\tau z)}%
{\displaystyle i\Im \ell_\zeta+ p_0(z;F,\Lambda)},\quad\zeta\in\partial Z\setminus\{0\},\label{EQ_G-zeta}
\end{gather}
or~$G(z)=G_0(z)\equiv0$.
\end{theorem}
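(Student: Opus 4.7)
The plan is to reduce the computation of $V_\tau(F,\Lambda)$ to the study of the joint range of $(p(0),p(\tau))$ over Herglotz functions, via the representation formula of Theorem~\ref{TH_representation-formula}(A), and then to apply Krein\,--\,Milman theory to the Carath\'eodory class~$\Cari$. For any non-trivial $G\in\GenDW(F,\Lambda)$ I would write $G(z)=(\tau-z)(1-\overline\tau z)/\big(p(z)+p_0(z;F,\Lambda)\big)$ with $p$ a Herglotz function. Using $G(\tau)=0$ and $\lambda(G)=-G'(\tau)$ at the internal DW-point, a direct computation gives
\[
\frac{\tau}{G(0)}\;=\;p(0)+\sum_{k=1}^n\alpha_k,\qquad \frac{1-|\tau|^2}{\lambda(G)}\;=\;p(\tau)+p_0(\tau;F,\Lambda).
\]
Thus the problem splits into (i)~identifying the admissible $\zeta=G(0)$, equivalently the admissible values of $p(0)=\ell_\zeta$, and (ii)~for each such $\zeta$, describing the range of $p(\tau)$.

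For (i), the condition $\Re p(0)\ge 0$ translates to $\Re\ell_\zeta\ge 0$, which an elementary algebraic manipulation shows to be exactly the closed disk~$Z$ passing through the origin. The boundary point $\zeta=0\in\partial Z$ is not reached through the representation formula (as $p(0)$ is finite) but is attained by the convention $G\equiv 0\in\GenDW(F,\Lambda)$ with $\lambda=0$, forcing $\Omega_0=\{0\}$. For (ii) with $\zeta\in\interior Z\setminus\{0\}$ we have $\Re\ell_\zeta>0$, and every Herglotz $p$ with $p(0)=\ell_\zeta$ factors uniquely as $p=(\Re\ell_\zeta)\widetilde p+i\Im\ell_\zeta$ with $\widetilde p\in\Cari$. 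Consequently the fiber of $V_\tau(F,\Lambda)$ over $\zeta$ is an affine image of the value region $\{\widetilde p(\tau)\colon\widetilde p\in\Cari\}$. Applying Krein\,--\,Milman (Theorem~\ref{TH_KreinMilman}) to the compact convex set $\Cari$ whose extreme points are $q_\sigma(z)=(\sigma+z)/(\sigma-z)$ (Remark~\ref{RM_Caratheodory-class}), together with Remark~\ref{RM_affine} for the continuous linear evaluation $\widetilde p\mapsto\widetilde p(\tau)$, this value region equals the closed convex hull of $\{q_\sigma(\tau)\colon\sigma\in\UC\}$. Eliminating $\sigma$ from $w=(\sigma+\tau)/(\sigma-\tau)$ with $|\sigma|=1$ shows that this trace is precisely the circle of radius $2|\tau|/(1-|\tau|^2)$ centered at $(1+|\tau|^2)/(1-|\tau|^2)$, whose closed convex hull is the enclosed closed disk. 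Translating back produces formula~\eqref{EQ_Omega(zeta)} for $\Omega_\zeta$.

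For the extremal generators, when $\zeta\in\partial Z\setminus\{0\}$ we have $\Re\ell_\zeta=0$, forcing $p$ to be the constant $i\Im\ell_\zeta$ and producing the unique $G_\zeta$ of~\eqref{EQ_G-zeta} together with the singleton fiber $\Omega_\zeta$. When $\zeta\in\interior Z\setminus\{0\}$ and $\omega\in\partial\Omega_\zeta$, the corresponding $\widetilde p(\tau)$ lies on the boundary circle; since $\sigma\mapsto q_\sigma(\tau)$ is the restriction to~$\UC$ of a M\"obius transformation, hence injective, the uniqueness clause of Remark~\ref{RM_affine} forces $\widetilde p=q_\sigma$ for a unique $\sigma\in\UC$, yielding $G_{\zeta,\sigma}$ of~\eqref{EQ_G-zeta,sigma}. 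The main delicate point will be keeping track of the several degenerate cases (the origin, the circle $\partial Z\setminus\{0\}$, and the degenerate fiber $\Omega_0$) and verifying the injectivity of the evaluation functional on $\extr\Cari$ in a way that lets the uniqueness of extreme Herglotz representatives transfer cleanly to uniqueness of boundary generators in~$V_\tau(F,\Lambda)$.
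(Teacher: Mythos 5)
Your proposal is correct and follows essentially the same route as the paper: reduce via the representation formula of Theorem~\ref{TH_representation-formula}(A), use $p(0)=\ell_\zeta$ to identify $Z$, factor out the Carath\'eodory normalization, and invoke Krein\,--\,Milman (Remark~\ref{RM_affine}) to compute the fiber $\Omega_\zeta$ as the affine image of the disk $\{q(\tau)\colon q\in\Cari\}$, with uniqueness of boundary generators coming from injectivity of $\sigma\mapsto q_\sigma(\tau)$ on $\extr\Cari$. The paper phrases the fiber computation in terms of the affine map $q\mapsto 1/\lambda(G)$ rather than $q\mapsto q(\tau)$, but this is a cosmetic difference; otherwise the two proofs coincide step for step.
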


\begin{proof}
According to Theorem~\ref{TH_representation-formula}\,(A), $G\in\GenDW(F,\Lambda)\setminus\{G\equiv0\}$ if and only if
\begin{equation}\label{EQ_GenDW-iff}
G(z)=(\tau-z)(1-\overline \tau z)\big/\big(p(z)+p_0(z;F,\Lambda)\big),\quad z\in\UD,
\end{equation}
for some Herglotz function~$p$. The inequality $\Re p(0)\ge0$ is equivalent to $\zeta:=G(0)\in Z\setminus\{0\}$.
From~\eqref{EQ_GenDW-iff} we immediately get ${p(0)=\ell_\zeta}$. In particular, if ${\zeta\in\partial Z\setminus\{0\}}$, then ${\Re \ell_\zeta=0}$; hence $p(z)\equiv i\Im\ell_\zeta$ and $G$ is given by~\eqref{EQ_G-zeta}, with $$\lambda(G)=-G'(\tau)={(1-|\tau|^2)}\big/{\big(i\Im\ell_\zeta+p_0(\tau;F,\Lambda)\big)}.$$ It is elementary to check that in this case, $\Omega_\zeta$ given by~\eqref{EQ_Omega(zeta)} is the singleton consisting of precisely this point.

Note also that $G(0)=0$ implies $G\equiv0$ because $\tau\neq0$. Therefore, it remains to consider the case $G(0)\in\interior Z$. To this end fix $\zeta\in\interior Z$ and solve the problem to find the range~$\Omega_\zeta$ of the map $G\mapsto\lambda(G)$ on the set
$
\mathcal G_\zeta:=\big\{G\in\GenDW(F,\Lambda)\colon G(0)=\zeta\big\},
$
which is described by formula~\eqref{EQ_GenDW-iff} with $p(z):=q(z)\Re\ell_\zeta+i\Im\ell_\zeta$, where $q\in\Cara$. Therefore, our task reduces to finding the range $R_\zeta$ of the affine map $\Cara\ni q\mapsto1/\lambda(G)\in\Complex$. According to Remark~\ref{RM_affine}, it is sufficient to find values of the map at the extreme points of~$\Cara$, which are well-known, see Remark~\ref{RM_Caratheodory-class}. The extreme points of~$\Cara$ correspond to the infinitesimal generators~\eqref{EQ_G-zeta,sigma} and we see that $R_\zeta$ is the convex hull of $\{1/\lambda(G_{\zeta,\sigma})\colon \sigma\in\UC\}$. By means of elementary  computations, this leads to the conclusion that the range of $G\mapsto\lambda(G)$ on
$\mathcal G_\zeta$ coincides with the closed disk~$\Omega_\zeta$ defined by~\eqref{EQ_Omega(zeta)}, with the boundary points delivered by the infinitesimal generators~\eqref{EQ_G-zeta,sigma}. Note that $q\mapsto1/\lambda(G)$ is injective on~$\extr\Cara$. Therefore, by Remark~\ref{RM_affine}, each $\omega\in\partial\Omega_\zeta$ is the image of \textit{exactly one} $G\in\mathcal G_\zeta$. The proof is complete.
\end{proof}

\vskip1ex

For the case of~$\tau=0$, we clearly have $G(0)=0$ for all~$G\in\GenDW(F,\Lambda)$, but we can choose another functional, e.g.~$G\mapsto G''(0)$.

%

\begin{theorem}\label{TH_value-region-tau-zero}
In the above notation, the value region
$$
\widehat V(F,\Lambda):=\Big\{\big(G''(0),\lambda(G)\big)\in\C^2\colon G\in\Gen_0(F,\Lambda)\Big\},
$$
coincides with the set $\Big\{(\zeta,\omega)\in\Complex^2\colon\omega\in \Omega,\,\zeta\in Z_\omega\Big\}$, where
\begin{gather*}
    \Omega:=\{\omega\colon|\omega-r|\le r\},\quad\text{with $~r:=\textstyle\big(\sum_{k=1}^n|\lambda_k|^{-1}\big)^{-1}$,}\\
    Z_\omega:=\Big\{\zeta\colon\left|\dfrac\zeta{2\omega^2}-\textstyle\sum\limits_{k=1}^n\dfrac{\overline\sigma_k}{|\lambda_k|}\right| \le 2\Re\dfrac1\omega-\sum_{k=1}^n|\lambda_k|^{-1}\Big\}\text{ for all~$\omega\in\Omega\setminus\{0\}$}
\end{gather*}
and $Z_0:=\{0\}$.
\vskip.75ex

Each boundary point of~$\widehat V(F,\Lambda)$ is delivered by a unique point of the form
\begin{equation}
    G(z)=\widehat G_{\omega,\sigma}(z):=-\frac{z}{\dfrac{\sigma+z}{\sigma-z}\Re\hat\ell_\omega\,+\,i\Im\hat\ell_\omega\,+\,p_0(z;F,\Lambda)},\quad \omega\in\interior\Omega,~\sigma\in\UC,
\end{equation}
where $\hat\ell_\omega:=1/\omega-(1/2)\sum_{k=1}^n|\lambda_k|^{-1}$, or
\begin{equation}\label{EQ_G-omega}
    G(z)=\widehat G_\omega(z):=-\frac{z}{i\Im\,\hat\ell_\omega+\,p_0(z;F,\Lambda)},\quad \omega\in\partial\Omega\setminus\{0\},
\end{equation}
or $G(z)=\widehat G_0(z)\equiv0$.
\end{theorem}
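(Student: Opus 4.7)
The plan is to mirror the proof of Theorem~\ref{TH_value-region-interior}, replacing the functional $G\mapsto G(0)$ (which vanishes identically when $\tau=0$) by $G\mapsto G''(0)$. Everything then reduces to a careful Taylor expansion at $z=0$ of the representation formula, combined with the Krein\,--\,Milman analysis of the Carath\'eodory class.

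First I invoke Theorem~\ref{TH_representation-formula}(A) with $\tau=0$: every non-trivial $G\in\Gen_0(F,\Lambda)$ is of the form $G(z)=-z/Q(z)$ with $Q:=p+p_0(\cdot;F,\Lambda)$ for some Herglotz function~$p$, where $\alpha_k=1/(2|\lambda_k|)$ by \eqref{EQ_p0-alpha_k}. Hence $p_0(0;F,\Lambda)=\tfrac1{2r}$ and $p_0'(0;F,\Lambda)=\sum_{k=1}^n\overline\sigma_k/|\lambda_k|$. Straightforward differentiation gives
$$
\omega:=\lambda(G)=-G'(0)=\frac{1}{Q(0)},\qquad \frac{G''(0)}{2\omega^{2}}=Q'(0)=p'(0)+\sum_{k=1}^n\frac{\overline\sigma_k}{|\lambda_k|},
$$
and therefore $p(0)=1/\omega-1/(2r)=\hat\ell_\omega$. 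The admissibility condition $\Re p(0)\ge 0$ translates into $\Re(1/\omega)\ge 1/(2r)$, i.e. $|\omega-r|\le r$, so the projection of $\widehat V(F,\Lambda)$ onto the $\omega$-axis is exactly~$\Omega$. The trivial generator accounts for $\omega=0$ and $Z_0=\{0\}$.

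For a fixed $\omega\in\interior\Omega$ the fiber is parametrized by the affine slice $p=(\Re\hat\ell_\omega)\,q+i\Im\hat\ell_\omega$ with $q\in\Cara$, and $G''(0)$ becomes the affine image of~$q'(0)$. By Remark~\ref{RM_Caratheodory-class}, the extreme points of~$\Cara$ form a one-parameter family whose image under $q\mapsto q'(0)$ is the circle $\{w:|w|=2\}$; Remark~\ref{RM_affine} then identifies $\{q'(0):q\in\Cara\}$ with the closed disk of radius~$2$ centered at~$0$. Pulling this back through the affine change $p'(0)\mapsto G''(0)/(2\omega^{2})-\sum_k\overline\sigma_k/|\lambda_k|$ and scaling by $\Re\hat\ell_\omega=\Re(1/\omega)-\tfrac12\sum_k|\lambda_k|^{-1}$ yields precisely the disk $Z_\omega$ of the statement. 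Uniqueness of the boundary representative $\widehat G_{\omega,\sigma}$ follows from the injectivity of $q\mapsto q'(0)$ on $\extr\Cara$, as in the proof of Theorem~\ref{TH_value-region-interior}. The boundary case $\omega\in\partial\Omega\setminus\{0\}$ forces $\Re\hat\ell_\omega=0$, so $p$ must be the trivial Herglotz function $i\Im\hat\ell_\omega$; this collapses the fiber to a single point, realized by $\widehat G_\omega$ as in~\eqref{EQ_G-omega}.

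No real obstacle is anticipated, since the argument is a direct adaptation of the proof of Theorem~\ref{TH_value-region-interior}. The only point that requires attention is the computation of $G''(0)$ from $G=-z/Q$ and the corresponding bookkeeping at the singular boundary $\partial\Omega$, where $p$ loses one degree of freedom. Once this is in place, Remarks~\ref{RM_Caratheodory-class} and~\ref{RM_affine} deliver the description of~$Z_\omega$ and the characterization of the extremal generators immediately.
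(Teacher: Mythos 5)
Your proposal is correct and follows essentially the same route as the paper's own proof: apply Theorem~\ref{TH_representation-formula}\,(A) with $\tau=0$, compute $\lambda(G)=1/\bigl(p(0)+\tfrac12\sum_k|\lambda_k|^{-1}\bigr)$ and $G''(0)/(2\lambda(G)^2)=p'(0)+\sum_k\overline\sigma_k/|\lambda_k|$, read off $\Omega$ from $\Re p(0)\ge0$, and for $\omega\in\interior\Omega$ reduce the fiber to the range of $q\mapsto q'(0)$ on~$\Cara$ via $p=(\Re\hat\ell_\omega)q+i\Im\hat\ell_\omega$, using Remarks~\ref{RM_affine} and~\ref{RM_Caratheodory-class} for the disk description and the uniqueness of boundary representatives, with the degenerate case $\Re\hat\ell_\omega=0$ forcing $p$ constant. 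Your Taylor computations and the resulting identification of $Z_\omega$ agree with the paper's formulas, so no gap.
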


\begin{proof}
According to Theorem~\ref{TH_representation-formula}\,(A), $G\in\Gen_0(F,\Lambda)\setminus\{G\equiv0\}$ if and only if
\begin{equation*}\label{EQ_GenDW-iff-zero}
G(z)=-z\big/\big(p(z)+p_0(z;F,\Lambda)\big),\quad z\in\UD,
\end{equation*}
with some Herglotz function $p$. Using this representation (and bearing in mind that~$\tau=0$) we immediately obtain
\begin{gather}\label{EQ_tau-zero_lambda}
\lambda(G)=-G'(0)=\Big(p(0)+\frac12\sum_{k=1}^{n}|\lambda_k|^{-1}\Big)^{-1},\\
\label{EQ_tau-zero_Gsecond}
\frac{G''(0)}{2\lambda(G)^2}=p'(0)+\sum_{k=1}^{n}\frac{\overline\sigma_k}{|\lambda_k|}.
\end{gather}
From~\eqref{EQ_tau-zero_lambda} it immediately follows that the range of $\Gen_0(F,\Lambda)\ni G\mapsto \lambda(G)$ is exactly the closed disk $\Omega$ defined in the statement of the theorem and moreover, every boundary point $\omega\in\partial\Omega$ corresponds to exactly one $G\in\Gen_0(F,\Lambda)$, namely, $G=\widehat G_\omega$ defined by~\eqref{EQ_G-omega}. It is therefore, elementary to check the statement of the theorem for this case.

Now fix some $\omega\in\interior \Omega$ and find the range~$Z_\omega$ of $G\mapsto G''(0)$ over all $G\in\Gen_0(F,\Lambda)$ satisfying~$\lambda(G)=\omega$. According to~\eqref{EQ_tau-zero_Gsecond}, our problem is reduced to finding the range of $p\mapsto p'(0)$ over all Herglotz functions $p$ with $p(0)=\hat\ell_\omega$. The rest of the proof consists of using the representation $p(z)=q(z)\Re\hat\ell_\omega\,+\,i\Im\hat\ell_\omega$, $q\in\Cara$, along with Remarks~\ref{RM_affine} and~\ref{RM_Caratheodory-class}, and some elementary computations. The details are similar to those in the proof of Theorem~\ref{TH_value-region-interior} and therefore we omit them.
\end{proof}


\subsection{Non-elliptic semigroups}\label{SS_valueReg-boundary} Now let us consider  the boundary case $\tau\in\UC$. We start with the analogue of Theorem~\ref{TH_value-region-interior}. The proof of Theorem~\ref{TH_value-region-boundary} is based on the following result, which can be of some independent interest.

\begin{proposition}\label{PR_in-Cara-class}
For~$a\in\Real$ and~$\tau\in\UC$,
$$
\min\big\{q\zrw(\tau)\colon q\in\Cara~\text{\small has a contact point at~$\tau$ with $q(\tau)=ia$}\big\}~=~\frac{1+a^2}2.
$$
The minimum is attained for the unique function $q(z)=q_{\sigma}(z):=(\sigma+z)/(\sigma-z)$, where $\sigma:=-\tau(1+ia)/(1-ia)$.
\end{proposition}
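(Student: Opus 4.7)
The plan is to parametrize $q\in\Cara$ via the Riesz--Herglotz representation and reduce the minimization to the Cauchy--Schwarz inequality. Since $q(0)=1$, the Herglotz measure $\mu$ is a Borel probability measure on $\UC$ and the additive constant vanishes. Lemma~\ref{LM_Sarason} and Lemma~\ref{LM_contact-point} will translate both the constraint $q(\tau)=ia$ and the quantity $q\zrw(\tau)$ into integrals against $\mu$, after which the inequality reduces to a one-line convexity estimate.

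First I would restrict attention to those $q$ for which $\tau$ is a \emph{regular} contact point, since otherwise $q\zrw(\tau)=+\infty$ by definition. For such $q$, write $q(z)=\int_\UC (\varsigma+z)/(\varsigma-z)\,\di\mu(\varsigma)$. Lemma~\ref{LM_Sarason} gives $q\zrw(\tau)=2\int_\UC|\varsigma-\tau|^{-2}\,\di\mu(\varsigma)<\infty$, so by Cauchy--Schwarz applied to the probability measure $\mu$, the function $|\varsigma-\tau|^{-1}$ is also $\mu$-integrable. Hence Lemma~\ref{LM_contact-point} applies and yields
\begin{equation*}
ia=q(\tau)=i\int_0^{2\pi}\cot(\theta/2)\,\di\mu(\tau e^{i\theta}).
\end{equation*}

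Next, push $\mu$ forward to a probability measure $\nu$ on $(-\pi,\pi]$ by $\varsigma=\tau e^{i\theta}$ (note $\nu(\{0\})=0$). Using $|\varsigma-\tau|^2=4\sin^2(\theta/2)$ and the identity $\csc^2=1+\cot^2$, the problem becomes: given $\int\cot(\theta/2)\,\di\nu=a$, minimize $\tfrac12\int\csc^2(\theta/2)\,\di\nu$. Jensen's inequality for the convex function $x\mapsto x^2$ (equivalently, Cauchy--Schwarz) gives
\begin{equation*}
\int\csc^2(\theta/2)\,\di\nu=1+\int\cot^2(\theta/2)\,\di\nu\ge 1+\Bigl(\int\cot(\theta/2)\,\di\nu\Bigr)^{\!2}=1+a^2,
\end{equation*}
which yields $q\zrw(\tau)\ge(1+a^2)/2$ as required.

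For the equality case, Cauchy--Schwarz is an equality precisely when $\cot(\theta/2)$ is $\nu$-a.e.\ constant, forcing $\nu$ (and hence $\mu$) to be a Dirac mass at some $\sigma\in\UC\setminus\{\tau\}$, i.e.\ $q=q_\sigma$. The prescribed value $q_\sigma(\tau)=ia$, combined with the elementary computation $q_\sigma(\tau)=-i\cot(\theta_0/2)$ for $\sigma=\tau e^{i\theta_0}$, determines $\theta_0\in(-\pi,\pi)\setminus\{0\}$ uniquely; a direct algebraic simplification then gives $\sigma=-\tau(1+ia)/(1-ia)$, and one checks that this $q_\sigma$ does attain the value $(1+a^2)/2$ (for instance by substituting $\cot^2(\theta_0/2)=a^2$ into the above). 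I do not foresee any serious obstacle; the main point to be careful about is verifying the integrability hypothesis of Lemma~\ref{LM_contact-point} so that the constraint can be written as an integral.
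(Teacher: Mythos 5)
Your argument is correct, and after the common first step it takes a genuinely different route from the paper. Both proofs begin identically: the normalization $q(0)=1$ makes the Herglotz measure $\mu$ a probability measure, Lemma~\ref{LM_Sarason} converts $q\zrw(\tau)$ into $2\int_{\UC}|\varsigma-\tau|^{-2}\di\mu(\varsigma)$, and (after checking, as you do via Cauchy--Schwarz, that $|\varsigma-\tau|^{-1}$ is $\mu$-integrable whenever $q\zrw(\tau)<\infty$) Lemma~\ref{LM_contact-point} turns the constraint $q(\tau)=ia$ into a moment condition on $\mu$. From that point the paper invokes the Winkler extreme-point machinery (Theorems~\ref{TH_W-extr} and~\ref{TH_W-est}) to reduce to measures supported on at most two points, exploits strict convexity of $x\mapsto(1+x^2)/2$ there, and then needs an extra face/compactness argument based on Krein--Milman (Theorem~\ref{TH_KreinMilman}) and the lower semicontinuity of Theorem~\ref{TH_regular-contact-point-of-HFunc}\,(B) to propagate uniqueness from the extreme points to the whole non-compact class $\Cara(\tau,a)$. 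You bypass all of this: pushing $\mu$ forward in the angular variable and writing $\csc^2=1+\cot^2$, the bound becomes a single application of Jensen (variance $\ge 0$), which applies directly to \emph{every} admissible $q$ with finite $q\zrw(\tau)$, and the equality case of the strictly convex Jensen inequality forces $\cot(\theta/2)$ to be a.e.\ constant, hence (by injectivity of $\theta\mapsto\cot(\theta/2)$ on $(-\pi,\pi]\setminus\{0\}$, worth one explicit sentence) $\mu$ is a Dirac mass and $q=q_\sigma$, with $\sigma$ then pinned uniquely by $q_\sigma(\tau)=ia$; attainment is checked directly. So your proof is shorter, more elementary, and delivers uniqueness without the non-compactness discussion, while the paper's proof has the merit of illustrating the Krein--Milman/moment-set technique used throughout the rest of the paper. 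Two cosmetic points: your displayed constraint (quoted from the statement of Lemma~\ref{LM_contact-point}) and your later direct evaluation $q_\sigma(\tau)=-i\cot(\theta_0/2)$ differ by a sign --- only $\cot^2$ enters the bound and the extremal $\sigma$ is determined from $q_\sigma(\tau)=ia$, so nothing breaks, but you should fix one consistent sign convention; and $\theta_0$ should be allowed to equal $\pi$ (the case $a=0$, $\sigma=-\tau$), so write $\theta_0\in(-\pi,\pi]\setminus\{0\}$ rather than $(-\pi,\pi)\setminus\{0\}$.
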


\begin{proof}
Denote
$$
\Cara(\tau,a):=\{q\in\Cara\colon\text{\small has a contact point at~$\tau$ with $q(\tau)=ia$}\big\}.
$$
According to Lemmas~\ref{LM_Sarason} and~\ref{LM_contact-point}, the problem to find the sharp lower bound for~$q\zrw(\tau)$ in~$\Cara(\tau,a)$ is equivalent to finding the sharp lower bound for
$$
I(\mu):=2\int_{\UC}|\varsigma-\tau|^{-2}\,\di\mu(\varsigma)~\in\,(0,+\infty]
$$
over all Borel probability measures on~$\UC$ subject to the conditions that $\varsigma\mapsto|\varsigma-\tau|^{-1}$ is~$\mu$-integrable  and that
\begin{equation}\label{EQ_cond-isoperim}
\int_0^{2\pi}\!\!\cot(\theta/2)\di\mu(\tau e^{i\theta})~=~a.
\end{equation}
Note that this set of probability measures corresponds via the Riesz\,--\,Herglotz representation~\eqref{EQ_HerglotzRepresentation} to a proper subset $\Cara^*(\tau,a)$ of $\Cara(\tau,a)$, which however contains all~$q\in\Cara(\tau,a)$ with finite~$q\zrw(\tau)$.

Theorems~\ref{TH_W-est} and~\ref{TH_W-extr} allows us to restrict ourselves to probability measures supported
at one or two points on~$\UC$. The corresponding elements of $\Cara$ have the form $q:=\lambda q_1+(1-\lambda)q_2$, where $\lambda\in[0,1]$ and $q_j(z):=(\sigma_j+z)/(\sigma_j-z)$, $z\in\UD$, $j=1,2$, with some~$\sigma_1,\sigma_2\in\UC$. Denote $a_j:=-iq_j(\tau)\in\Real$, $j=1,2$. Then $a=-iq(\tau)=\lambda a_1+(1-\lambda)a_2$ and $q\zrw(\tau)=\lambda f(a_1)+(1-\lambda)f(a_2)$, where $f(x):=(1+x^2)/2$.
Thanks to the fact that $f$ is strictly convex, for any~$\lambda\in[0,1]$ and any $a_1,a_2\in\Real$,
$$
\lambda f(a_1)+(1-\lambda)f(a_2)\ge f\big(\lambda a_1+(1-\lambda)a_2\big)=f(a),
$$
with the strict inequality unless $\lambda\in\{0,1\}$ or $a_1=a_2$. It follows that the minimum is attained only when~$q(z)=q_\sigma(z):=(\sigma+z)/(\sigma-z)$, where $\sigma\in\UC$ is uniquely determined by $q_\sigma(\tau)=ia$, i.e. $\sigma=-\tau(1+ia)/(1-ia)$.

The function~$q_\sigma$ is the unique extreme point of~$\Cara^*(\tau,a)$ at which the minimum of $q\zrw(\tau)$ is attained. To complete the proof, it remains to show that there are no other (non-extreme) points~$q\in\Cara^*(\tau,a)$ of minimum.
Indeed, consider the preimage of the minimal value, i.e. the set $E:=\{q\in\Cara^*(\tau,a)\colon q\zrw(\tau)=q_\sigma\zrw(\tau)\}=\{q\in\Cara^*(\tau,a)\colon q\zrw(\tau)\le q_\sigma\zrw(\tau)\}$. Clearly, $E$ is a face for~$\Cara^*(\tau,a)$. In particular, it is a convex set. Moreover, by Theorem~\ref{TH_regular-contact-point-of-HFunc}\,(B), $E$ is closed. Since $E\subset\Cara$ and $\Cara$ is compact, it follows that $E$ is also compact. Therefore, the standard argument applies even though $\Cara^*(\tau,a)$ is not compact itself. Namely, if $E\neq\{q_\sigma\}$, then by Krein\,--\,Milman Theorem~\ref{TH_KreinMilman} we would have that $E$ has at least two distinct extreme points, which are in turn extreme points of~$\Cara^*(\tau,a)$. This would constitute a contradiction with the uniqueness of the minimum on $\extr\Cara^*(\tau,a)$, so the proof is complete.
\end{proof}

\begin{remark}
The set of Carath\'eodory functions~$\Cara(\tau,a)$ considered in Proposition~\ref{PR_in-Cara-class} is not compact. However, arguing as in the proof of Theorem~\ref{TH_regular-contact-point-of-HFunc}(B), one can show that $q\zrw(\tau)$ tends to its minimal value along a sequence~$(q_n)\subset\Cara(\tau,a)$ if and only if only if $(q_n)$~converges to the extremal function~$q_\sigma$.
\end{remark}

\begin{theorem}\label{TH_value-region-boundary}
Let $\tau\in\UC\setminus F$. In the above notation, 
$$
 V_\tau(F,\Lambda):=\Big\{\big(G(0),\lambda(G)\big)\in\C\times\Real\colon G\in\GenDW(F,\Lambda)\Big\}=
 \Big\{(\zeta,\omega)\colon\zeta\in Z,\,\omega\in I_\zeta\Big\},
$$
where:
\begin{itemize}
\item[(i)] for any $\zeta\in\interior Z$, $I_\zeta$ is the interval $\big[0,\omega_\zeta\big]$, $\omega_\zeta:=f\big(\ell_\zeta\,+\,i\sum_{k=1}^n|\lambda_k|^{-1}\Im(\overline\sigma_k\tau)\big)$,
$$
 f(w):=\frac{2\Re w}{|w|^2\,+\,2\Re w\,\sum_{k=1}^n|\lambda_k|^{-1}},
$$
with $\ell_\zeta$ and $Z$ defined as in Theorem~\ref{TH_value-region-interior};
\vskip.75ex

\item[(ii)]
for any $\zeta\in\partial Z$, $I_\zeta$ is a singleton: namely, if $1/\overline{\zeta}=\sum_{k=1}^n|\lambda_k|^{-1}(\tau-\sigma_k)$, then $I_\zeta=\big\{\big(\sum_{k=1}^n|\lambda_k|^{-1}\big)^{-1}\big\}$; otherwise, $I_\zeta=\{0\}$.
\end{itemize}
\vskip.75ex

Moreover, for each $\zeta\in\interior Z$, there exists a unique $G\in\GenDW(F,\Lambda)$ such that $G(0)=\zeta$ and $\lambda(G)=\omega_\zeta$; it is given by
\begin{equation}\label{EQ_extr-func-boundary}
G(z)=\widetilde G_\zeta(z):=\frac{(\tau-z)(1-\overline\tau z)}%
{\displaystyle \frac{\sigma_\zeta+z}{\sigma_\zeta-z}\Re \ell_\zeta+i\Im \ell_\zeta+ p_0(z;F,\Lambda)},
\end{equation}
where $\sigma_\zeta\in\UC$ is uniquely defined by the condition that the denominator in~\eqref{EQ_extr-func-boundary} vanishes at~$z=\tau$.
\end{theorem}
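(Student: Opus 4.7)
The plan is to parallel the proof of Theorem~\ref{TH_value-region-interior}, using the representation formula of Theorem~\ref{TH_representation-formula}(A): every $G\in\GenDW(F,\Lambda)\setminus\{G\equiv 0\}$ has the form $G(z)=(\tau-z)(1-\overline{\tau}z)/[p(z)+p_0(z;F,\Lambda)]$ for some Herglotz function~$p$. The equation $G(0)=\zeta$ translates into $p(0)=\tau/\zeta-\sum_k\alpha_k=\ell_\zeta$, so the admissibility condition $\Re p(0)\ge 0$ is precisely $\zeta\in Z$.

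The genuinely new ingredient, compared with the interior case, is the computation of $\lambda(G)=\anglim_{z\to\tau}(1-\overline{\tau}z)/[p(z)+p_0(z;F,\Lambda)]$. I would first record two elementary facts about $p_0$: since $(\sigma_k+\tau)/(\sigma_k-\tau)\in i\Real$ whenever $\sigma_k,\tau\in\UC$ with $\sigma_k\neq\tau$, the value $p_0(\tau;F,\Lambda)$ is purely imaginary; and, setting $\sigma_k=\tau e^{i\theta_k}$, a direct calculation yields the clean identity $\tau p_0'(\tau)=-\sum_k 1/|\lambda_k|$. In combination with Theorem~\ref{TH_Julia-half-plane} these force $\lambda(G)>0$ to occur iff $p$ possesses a regular contact point at $\tau$ with $p(\tau)=-p_0(\tau;F,\Lambda)$, in which case an angular-limit manipulation using $(z-\tau)/(1-\overline{\tau}z)\to -\tau$ gives
$$
 \frac{1}{\lambda(G)}=p\zrw(\tau)+\sum_{k=1}^{n}\frac{1}{|\lambda_k|};
$$
otherwise $\lambda(G)=0$.

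For $\zeta\in\interior Z$ I would then write $p=q\,\Re\ell_\zeta+i\Im\ell_\zeta$ with $q\in\Cara$, so that $p\zrw(\tau)=\Re\ell_\zeta\cdot q\zrw(\tau)$ and the contact condition becomes $q(\tau)=ia$, with $a\in\Real$ forced by $a\Re\ell_\zeta=-(\Im\ell_\zeta+B)$ and $B:=\sum|\lambda_k|^{-1}\Im(\overline{\sigma_k}\tau)$. Proposition~\ref{PR_in-Cara-class} then delivers the sharp lower bound $q\zrw(\tau)\ge(1+a^2)/2$, uniquely attained at $q(z)=(\sigma_\zeta+z)/(\sigma_\zeta-z)$ with $\sigma_\zeta=-\tau(1+ia)/(1-ia)$; the corresponding extremal Herglotz function plugged back into the representation produces exactly~$\widetilde G_\zeta$. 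Introducing $w:=\ell_\zeta+iB$, an elementary algebraic simplification identifies the resulting maximum $\omega_\zeta$ with $f(w)$, and every intermediate value $\omega\in(0,\omega_\zeta)$ is realised by keeping the contact value $ia$ fixed and increasing $q\zrw(\tau)$; the endpoint $\omega=0$ is realised, for instance, by the choice $p\equiv\ell_\zeta$, which admits no contact point at $\tau$ because $\Re\ell_\zeta>0$.

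For $\zeta\in\partial Z$ the Riesz--Herglotz representation forces $p\equiv i\Im\ell_\zeta$, so $G$ is uniquely determined; $\lambda(G)>0$ occurs exactly when $i\Im\ell_\zeta=-p_0(\tau;F,\Lambda)=-iB$, which via $\tau/\zeta=A-iB=\sum|\lambda_k|^{-1}(1-\overline{\sigma_k}\tau)$ rearranges into the stated condition $1/\overline{\zeta}=\sum|\lambda_k|^{-1}(\tau-\sigma_k)$, and then $\lambda(G)=(\sum 1/|\lambda_k|)^{-1}$. The claim that $\sigma_\zeta$ in $\widetilde G_\zeta$ is characterised by the denominator vanishing at $z=\tau$ is a short direct check using $(\sigma_\zeta+\tau)/(\sigma_\zeta-\tau)=ia$. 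The principal obstacle I anticipate is the delicate angular-limit analysis giving $1/\lambda(G)=p\zrw(\tau)+\sum 1/|\lambda_k|$: one must carefully rule out non-regular contact points and match the non-tangential rate at which the denominator vanishes at $\tau$, and then exhibit the realisation of each $\omega\in(0,\omega_\zeta)$ to complete the sharpness argument.
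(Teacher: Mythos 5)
Your proposal is correct and follows essentially the same route as the paper: the representation formula of Theorem~\ref{TH_representation-formula}(A), the reduction $p=q\,\Re\ell_\zeta+i\Im\ell_\zeta$ with $q\in\Cara$, the angular-limit analysis showing $\lambda(G)>0$ forces a regular contact point at $\tau$ with $\lambda(G)=\big(q\zrw(\tau)\Re\ell_\zeta+\sum_{k=1}^n|\lambda_k|^{-1}\big)^{-1}$, and Proposition~\ref{PR_in-Cara-class} with exactly the same value of $a$ for the sharp bound and the uniqueness of $\widetilde G_\zeta$, together with the same degenerate treatment of $\zeta\in\partial Z$. The only cosmetic difference is that the paper obtains the interval structure of $I_\zeta$ from convexity of $\{G\in\GenDW(F,\Lambda)\colon G(0)=\zeta\}$ and linearity of $G\mapsto\lambda(G)$, whereas you realize the intermediate values directly by prescribing $q\zrw(\tau)$ at fixed contact value $ia$; both work.
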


\begin{proof}
Following the proof of Theorem~\ref{TH_value-region-interior}, we see that also for ${\tau\in\UC}$,
the range of $\GenDW(F,\Lambda)\ni G\mapsto G(0)$ coincides with~$Z$ and that for each $\zeta\in\partial Z$, there exists a unique $G=G_\zeta\in\GenDW(F,\Lambda)$ such that $G(0)=\zeta$. Namely, $G_\zeta$ is given by~\eqref{EQ_G-zeta} if $\zeta\neq0$, and $G_0\equiv0$. Therefore, part~(ii)  of Theorem \ref{TH_value-region-boundary} can be verified by a simple computation.

Now fix $\zeta\in\interior Z$ and let us find the range~$I_\zeta\subset\Real$ of $G\mapsto \lambda(G)$ on the set $\mathcal G_\zeta:=\big\{G\in\GenDW(F,\Lambda)\colon G(0)=\zeta\big\}$. As in the proof of Theorem~\ref{TH_value-region-interior}, we can write
\begin{equation}\label{EQ_GenDW-q}
G(z)=\frac{(\tau-z)(1-\overline \tau z)}{\displaystyle q(z)\Re\ell_\zeta+i\Im\ell_\zeta+p_0(z;F,\Lambda)},\quad z\in\UD,
\end{equation}
where $q\in\Cara$. Trivially, $\lambda(G)\ge0$, and moreover, $\lambda(G)=0$ if we set $q\equiv 1$ in the above representation~\eqref{EQ_GenDW-q}. Hence, $\min I_\zeta=0$. Note also that $\mathcal G_\zeta$ is convex and that for a fixed~$\tau$, $G\mapsto \lambda(G)$ is linear. Therefore, $I_\zeta$ is an interval and it remains to find~$\max I_\zeta$.

Denote by $p_*$ the denominator of~\eqref{EQ_GenDW-q}. Suppose first that $\lambda(G)\neq0$. Then there exists finite angular limit
\begin{equation}\label{EQ_lim-explan}
\frac{1}{\lambda(G)}=\anglim_{z\to\tau}\frac{z-\tau}{G(z)}= \anglim_{z\to\tau}\frac{p_*(z)}{1-\overline\tau z}
\end{equation}
and hence $p_*$ has a regular null-point at~$\tau$. Note that $p_0(\cdot;F,\Lambda)$ is holomorphic at~$\tau$ and $\Re p_0(\tau;F,\Lambda)=0$. It follows that $q$ has a regular contact point at~$\tau$ and we easily see that $$\lambda(G)=1/p\zrw_*(\tau)=\big(q\zrw (\tau)\Re\ell_\zeta+p\zrw_0(\tau;F,\Lambda)\big)^{-1}.$$

This formula holds true under the weaker assumption that $\anglim_{z\to\tau} p_*(z)=0$. Indeed, in this case, $\tau$ is still a contact point of~$q$. If it is not regular, then by the above argument $\lambda(G)=0$, and $q\zrw(\tau)=+\infty$ by the very definition.

Finally, notice that if $\anglim_{z\to\tau}p_*(z)$ does not exists or it is different from~$0$, then the limit in~\eqref{EQ_lim-explan} cannot be finite. Hence, in such a case, $\lambda(G)=0$.


Thus, finding $\max I_\zeta$ reduces to the problem solved in Proposition~\ref{PR_in-Cara-class} with
$$
a:=-\frac{\Im\ell_\zeta+\Im p_0(\tau;F,\Lambda)}{\Re\ell_\zeta}=-\frac{\textstyle \Im\ell_\zeta+\sum_{k=1}^n|\lambda_k|^{-1}\Im(\overline\sigma_k\tau)}
{\Re\ell_\zeta},
$$
and it is just a computation to check the expressions for $\max I_\zeta$ and for the unique extremal function given in the statement of Theorem \ref{TH_value-region-boundary}.
\end{proof}

Theorems~\ref{TH_value-region-interior}\,--\,\ref{TH_value-region-boundary} imply the following well-known result (see \cite{ElShTa11}, \cite{CD}).
\begin{corollary}\label{CR_lambda}
The following assertions hold.
\begin{itemize}
\item[(A)] If $\tau\in\UD$, then the range of  $G\mapsto\lambda(G)$ on $\GenDW(F,\Lambda)$ is $$\{\omega\colon|\omega-r|\le r\},\quad\text{where $~r:=\textstyle\big(\sum_{k=1}^n|\lambda_k|^{-1}\big)^{-1}$.}$$ Each boundary point is delivered by exactly one function $G\in\GenDW(F,\Lambda)$, and the family of all such functions coincides with~$\{G_\zeta\colon\zeta\in\partial Z\}$, where $G_\zeta$'s are defined in Theorem~\ref{TH_value-region-interior}.

    \item[(A${}'$)] In particular, for $\tau\in\UD$ we have the sharp estimate
$\Re\lambda(G)\le 2\,\big(\sum_{k=1}^n|\lambda_k|^{-1}\big)^{-1}$ for all $G\in \GenDW(F,\Lambda)$, with the equality only for
$$
 G(z)=\frac{(\tau-z)(1-\overline\tau z)}{p_0(z;F,\Lambda)-i\sum_{k=1}^n|\lambda_k|^{-1}\Im(\overline\sigma_k\tau)},\quad z\in\UD.
$$

\item[(B)] If $\tau\in\UC\setminus F$, then the sharp estimate $\lambda(G)\le \big(\sum_{k=1}^n|\lambda_k|^{-1}\big)^{-1}$ holds for any ${G\in\GenDW(F,\Lambda)}$, with the equality occurring  only for the function $G$ defined by the same formula as in~(A${}'$).
\end{itemize}
\end{corollary}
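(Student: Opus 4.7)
The plan is to deduce Corollary~\ref{CR_lambda} by projecting the value regions of Theorems~\ref{TH_value-region-interior}--\ref{TH_value-region-boundary} onto the $\omega$-coordinate. The key preliminary is an evaluation of $p_0(\tau;F,\Lambda)$: using the identity $\Re\frac{\sigma_k+\tau}{\sigma_k-\tau}=\frac{1-|\tau|^2}{|\sigma_k-\tau|^2}$ together with the definition $\alpha_k=|\tau-\sigma_k|^2/(2|\lambda_k|)$, one finds
$$
\Re p_0(\tau;F,\Lambda)=\frac{1-|\tau|^2}{2r},\qquad \Im p_0(\tau;F,\Lambda)=\sum_{k=1}^n\frac{\Im(\tau\overline\sigma_k)}{|\lambda_k|}=:c,
$$
where $r:=\big(\sum_k|\lambda_k|^{-1}\big)^{-1}$. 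This identity underpins the analysis of both parts; note in particular that $\Re p_0(\tau;F,\Lambda)$ vanishes when $\tau\in\UC$.

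For assertion~(A) with $\tau=0$, the statement is literally Theorem~\ref{TH_value-region-tau-zero}. For $\tau\in\UD\setminus\{0\}$, rather than projecting the explicit set of Theorem~\ref{TH_value-region-interior} directly, I would go back to the representation of Theorem~\ref{TH_representation-formula}(A): $\lambda(G)=(1-|\tau|^2)/(p(\tau)+p_0(\tau;F,\Lambda))$ for some Herglotz function $p$. Because every constant $p\equiv w$ with $w\in\overline\UH$ is Herglotz and because $p(\tau)\in\overline\UH$ for every Herglotz $p$, the denominator $W:=p(\tau)+p_0(\tau;F,\Lambda)$ ranges precisely over the half-plane $\{\Re W\ge(1-|\tau|^2)/(2r)\}$; the conformal map $W\mapsto(1-|\tau|^2)/W$ sends this onto the closed disk $\{|\omega-r|\le r\}$. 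A boundary point $\omega$ requires $\Re p(\tau)=0$ at an interior point; the minimum principle applied to the non-negative harmonic function $\Re p$ then forces $p$ to be a purely imaginary constant, and these constants are in bijection with $\zeta\in\partial Z$ from Theorem~\ref{TH_value-region-interior} via $\Im p=\Im\ell_\zeta$. Assertion~(A${}'$) follows because $\omega=2r$ is the unique circle point with maximal real part, which forces $p\equiv -ic$ and yields the displayed extremal generator.

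For~(B), I would read off Theorem~\ref{TH_value-region-boundary} directly. For $\zeta\in\interior Z$, setting $w:=\ell_\zeta+ic$ and $s:=1/r$, the function $f(w)=2\Re w/(|w|^2+2s\Re w)$ satisfies $f(w)<r$ whenever $\Re w>0$ (a one-line check amounting to $|w|^2>0$), so $I_\zeta\subset[0,r)$. For $\zeta\in\partial Z$, $I_\zeta$ is a singleton, equal to $\{r\}$ or $\{0\}$ by part~(ii) of that theorem. Hence $\lambda(G)\le r$ with equality attained only at the unique $\zeta_*\in\partial Z$ satisfying $1/\overline{\zeta_*}=\sum_k|\lambda_k|^{-1}(\tau-\sigma_k)$. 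A short computation using $|\tau|=1$ and $\tau(\overline\tau-\overline\sigma_k)=1-\tau\overline\sigma_k$ gives $\ell_{\zeta_*}=-ic$, so the extremal generator $\widetilde G_{\zeta_*}$ reduces to $(\tau-z)(1-\overline\tau z)/(p_0(z;F,\Lambda)-ic)$, matching the formula in~(A${}'$).

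The only real obstacle is bookkeeping: matching the parametrization ``purely imaginary constant $p$'' with the parametrization ``$\zeta\in\partial Z$'' from Theorem~\ref{TH_value-region-interior}, and pinpointing the specific boundary $\zeta_*$ in~(B). Once these identifications are made explicit, the inequalities and uniqueness assertions are immediate.
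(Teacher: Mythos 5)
Your proposal is correct, but for part (A) with $\tau\in\UD\setminus\{0\}$ you take a genuinely different route from the paper. The paper does not return to the representation formula there: it reads the range of $\lambda(G)$ off the two-parameter value region of Theorem~\ref{TH_value-region-interior}, passing to $\eta(G):=(1-|\tau|^2)/\lambda(G)$ and taking the union of the transformed disks $\Omega_\zeta$ over the level sets $\Re\ell_\zeta=C$, which produces the half-plane $\Re\eta\ge\Re p_0(\tau;F,\Lambda)$ with boundary attained exactly by the generators $G_\zeta$, $\zeta\in\partial Z$. You instead evaluate the denominator of Theorem~\ref{TH_representation-formula}\,(A) at~$\tau$, note $\lambda(G)=(1-|\tau|^2)/\big(p(\tau)+p_0(\tau;F,\Lambda)\big)$, and obtain the same half-plane directly from $\Re p(\tau)\ge0$ together with the fact that constants with non-negative real part are Herglotz; the minimum principle for $\Re p$ then settles uniqueness at boundary points, and your identification $b=\Im\ell_\zeta$ recovers the family $\{G_\zeta\colon\zeta\in\partial Z\}$. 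Your version is more elementary and self-contained (it bypasses the geometry of the disks $\Omega_\zeta$), while the paper's version exhibits the corollary as a literal projection of the already-established value region, with the extremal family appearing automatically; for $\tau=0$ and for part (B) you follow essentially the paper's route (citing Theorem~\ref{TH_value-region-tau-zero}, resp.\ the bound $f(w)<\big(\sum_{k}|\lambda_k|^{-1}\big)^{-1}$ for $\Re w>0$ from Theorem~\ref{TH_value-region-boundary}). Two small points: in (B), passing from ``equality forces $G(0)=\zeta_*\in\partial Z$'' to the specific extremal generator needs that a boundary value of $G(0)$ determines $G$ uniquely; this appears only in the proof, not the statement, of Theorem~\ref{TH_value-region-boundary}, but your own minimum-principle observation ($\Re p(0)=\Re\ell_{\zeta_*}=0$ forces $p$ to be an imaginary constant) gives it verbatim, so you should say so explicitly. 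Also, that extremal function is $G_{\zeta_*}$ of~\eqref{EQ_G-zeta} rather than $\widetilde G_{\zeta_*}$ of~\eqref{EQ_extr-func-boundary} (the latter is defined for $\zeta\in\interior Z$), although the formula you write for it is the correct one precisely because $\Re\ell_{\zeta_*}=0$.
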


\begin{proof}
Fix $\tau\in\UD\setminus\{0\}$ and apply Theorem~\ref{TH_value-region-interior}. Instead of $\lambda(G)$, we will consider the quantity $\eta(G):=(1-|\tau|^2)/\lambda(G)$. Note that the range of $\zeta\mapsto\ell_\zeta$ on~$Z\setminus\{0\}$ is the closed half-plane $\{w\colon \Re w\ge0\}$. Moreover, for each fixed $C\ge0$, the union of disks $\{(1-|\tau|^2)/\omega\colon\omega\in\Omega_\zeta\}$ over all $\zeta\in Z$ with $\Re\ell_\zeta=C$ is equal to
$$
\Big\{\eta\colon \frac{1-|\tau|^2}{1+|\tau|^2}C \le\Re \big(\eta-p_0(\tau;F,\Lambda)\big)\le\frac{1+|\tau|^2}{1-|\tau|^2}C\Big\}.
$$
Taking the union over all $C\ge0$, we see that the range of $G\mapsto\eta(G)$ on ${\GenDW(F,\Lambda)\setminus\{G\equiv0\}}$ is the closed half-plane described by~$\Re\eta\ge\Re P_0(\tau;F,\Lambda)=\tfrac12(1-|\tau|^2)\sum_{k=1}^n|\lambda_k|^{-1}$, with the boundary corresponding to $\Re\ell_\zeta=0$, i.e. to the infinitesimal generators $G_\zeta$, $\zeta\in\partial Z\setminus\{0\}$, defined by~\eqref{EQ_G-zeta}.

This proves (A) and (A${}'$) for $\tau\neq0$; and for~$\tau=0$, these two assertions follow immediately from Theorem~\ref{TH_value-region-tau-zero}.

Now let us assume $\tau\in\UC\setminus F$. In this case, we have to apply Theorem~\ref{TH_value-region-boundary}.
Note that $\Re\ell_\zeta>0$ for all $\zeta\in \interior Z$. The function $f$ defined in Theorem~\ref{TH_value-region-boundary} satisfies $$f(u+iv)<f(u)<\lim_{\varepsilon\to 0^+}f(\varepsilon)=\big(\sum_{k=1}^n|\lambda_k|^{-1}\big)^{-1}$$ for all $u>0$ and $v\in\Real$. Therefore, according to Theorem~\ref{TH_value-region-boundary}, the maximum of $\lambda(G)$ on $\GenDW(F,\Lambda)$ coincides with the r.h.s. of the above inequality and it is attained only of $G=G_\zeta$ with $\zeta:=\overline{\big(\sum_{k=1}^n|\lambda_k|^{-1}(\tau-\sigma_k)\big)^{-1}}$. This proves~(B).
\end{proof}

If $\tau\in\UD$, then {$\Re \lambda(G)=0$} may happen only for a very narrow class of one-parameter semigroups: all elements of such semigroups are Moebius transformations of~$\UD$. However, if $\tau\in\UC$, then $\lambda(G)=0$ means simply that the one-parameter semigroup is parabolic, which is probably the most interesting and complicated case. Theorem~\ref{TH_value-region-boundary2} below deals with one important class of parabolic semigroups.

\begin{remark}\label{RM_beta}
Let $\tau\in\UC$ and let $G$ be the infinitesimal generator of a non-trivial one-parameter semigroup with the DW-point~at~$\tau$.
Then, by the Berkson\,--\,Porta formula~\eqref{EQ_BP-formula},
$G(z):=\tau(1-\overline\tau z)^2p(z)$, where $p\not\equiv0$ is a Herglotz function. Therefore, the limit
\begin{equation}\label{EQ_fla-for-betta}
\beta(G):=\anglim_{z\to\tau}\frac{(\tau-z)^3}{\tau^2G(z)}=\big(1/p\big)\plw(\tau)
\end{equation}
exists and belongs to~$[0,+\infty)$.
\end{remark}
It {can be seen} 
that $\beta(G)\neq0$ if and only if $G$ has angular derivatives at~$\tau$ up to the third order and $G'(\tau)=G''(\tau)=0$. The Cayley map $\UD\ni z\mapsto (\tau+z)/(\tau-z)\in\UH$ establishes a one-to-one correspondence between  one-parameter semigroups in~$\UD$ with the DW-point at~$\tau$ and one-parameter semigroups in~$\UH$ with the DW-point at~$\infty$. Under this correspondence, infinitesimal generators $G\not\equiv0$  in~$\UD$ with $\beta(G)\neq0$ are transformed to infinitesimal generators in~$\UH$ that can be characterized as holomorphic functions ${H:\UH\to\UH}$ with the asymptotic expansion $H(\zeta)={\ell(H)/\zeta+\gamma(\zeta)}$, where $\ell(H)=4/\beta(G)\in(0,+\infty)$ and $\anglim_{\zeta\to\infty}\zeta\gamma(\zeta)=0$. Such functions~$H$ play an important role in the chordal Loewner Theory, see e.g.~\cite{Goryainov-Ba, Bauer}.

The following theorem gives the sharp estimate of~$\beta(G)$ for $G\in\GenDW(F,\Lambda)$ with a prescribed value~$G(0)$.

\begin{theorem}\label{TH_value-region-boundary2}
Let $\tau\in\UC\setminus F$. The value region
$$
W_\tau(F,\Lambda):=\Big\{\big(G(0),\beta(G)\big)\in\C\times\R\colon G\in\GenDW(F,\Lambda)\setminus\{G\equiv0\}\Big\},
$$
coincides with the set $\Big\{(\zeta,b)\colon\zeta\in Z\setminus\{0\},\,0\le b\le 2\Re\ell_\zeta\Big\},$
where $Z$ and $\ell_\zeta$ are defined as in the Theorem~\ref{TH_value-region-interior}.

Moreover, for each $\zeta\in Z\setminus\{0\}$, there exists a unique $G\in\GenDW(F,\Lambda)$ such that $G(0)=\zeta$ and $\beta(G)=2\Re\ell_\zeta$; it is given by
\begin{equation}\label{EQ_G-zeta,tau}
G(z)=G_{\zeta,\tau}(z):=\frac{(\tau-z)(1-\overline\tau z)}%
{\displaystyle \frac{\tau+z}{\tau-z}\Re \ell_\zeta+i\Im \ell_\zeta+ p_0(z;F,\Lambda)},\quad\text{for all $z\in\UD$}.
\end{equation}
\end{theorem}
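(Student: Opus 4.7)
My plan is to parallel the structure of the proof of Theorem~\ref{TH_value-region-boundary}: apply the representation formula in Theorem~\ref{TH_representation-formula}(A) to reduce the problem to a statement about Herglotz functions at their pole~$\tau$, and then invoke Julia's lemma for the half-plane (Theorem~\ref{TH_Julia-half-plane}) together with Lemma~\ref{LM_Julia}. The key preliminary observation is that $\beta(G)$ depends only on the ``free'' Herglotz factor in the representation; once this is made precise, both the upper bound and the characterization of equality become short computations.

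First, for a non-trivial $G\in\GenDW(F,\Lambda)$, Theorem~\ref{TH_representation-formula}(A) provides a Herglotz function~$p$ with $G(z)=(\tau-z)(1-\overline\tau z)/\bigl(p(z)+p_0(z;F,\Lambda)\bigr)$. Since $|\tau|=1$, we have $(\tau-z)(1-\overline\tau z)=\tau(1-\overline\tau z)^2$, so in the notation of Remark~\ref{RM_beta} the Berkson--Porta factor is $p_*=1/(p+p_0)$ and consequently $\beta(G)=(p+p_0)^\star(\tau)$. Because $\tau\notin F$, the function $p_0(\,\cdot\,;F,\Lambda)$ is holomorphic at~$\tau$, hence $(p_0)^\star(\tau)=0$, and therefore $\beta(G)=p^\star(\tau)\ge0$. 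The condition $G(0)=\zeta$ translates to $p(0)=\tau/\zeta-p_0(0;F,\Lambda)=\ell_\zeta$, and $\Re p(0)\ge0$ is precisely $\zeta\in Z$; moreover $\zeta=0$ forces $G\equiv0$, excluded by hypothesis.

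Next, I would apply Theorem~\ref{TH_Julia-half-plane} to the conjugated function $f(w):=p\bigl(\tau(w-1)/(w+1)\bigr)\in\Hol(\UH,\overline\UH)$. A short change-of-variable calculation shows $p^\star(\tau)=2f'(\infty)$, and Julia's estimate evaluated at $w=1$ gives $f'(\infty)\le\Re f(1)=\Re p(0)=\Re\ell_\zeta$. Thus $\beta(G)\le 2\Re\ell_\zeta$, producing the claimed upper bound. To show every $b\in[0,2\Re\ell_\zeta]$ is attained, I would take the explicit Herglotz function
\[
p(z):=\frac{b}{2}\,\frac{\tau+z}{\tau-z}+\Bigl(\ell_\zeta-\frac{b}{2}\Bigr),
\]
whose real part at $0$ is $\Re\ell_\zeta-b/2\ge0$; one checks directly that $p(0)=\ell_\zeta$ and $p^\star(\tau)=b$, producing the desired $G\in\GenDW(F,\Lambda)$ via the representation formula.

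For the uniqueness of the extremizer at $b=2\Re\ell_\zeta$, I would use Lemma~\ref{LM_Julia} to decompose $p(z)=q(z)+\Re\ell_\zeta\cdot(\tau+z)/(\tau-z)$ with $q$ Herglotz and $q^\star(\tau)=0$. Evaluating at $z=0$ yields $q(0)=\ell_\zeta-\Re\ell_\zeta=i\Im\ell_\zeta$, so $\Re q(0)=0$ and $q$ is the trivial Herglotz function $q\equiv i\Im\ell_\zeta$. Substituting the resulting $p$ into the representation formula produces exactly~\eqref{EQ_G-zeta,tau}. The only real subtlety I anticipate is justifying $(p_0)^\star(\tau)=0$ cleanly and identifying $\beta(G)$ with $p^\star(\tau)$; once this reduction is in place, both the estimate and the equality case reduce to standard Julia-type facts already developed in the paper.
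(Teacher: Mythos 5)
Your proof is correct, and it takes a genuinely different route from the paper. The paper reduces the problem to the linear functional $q\mapsto q\plw(\tau)$ on the Carath\'eodory class~$\Cara$ and invokes the Krein--Milman machinery: upper-semicontinuity of this functional (Remark~\ref{RM_semicont}) allows Theorem~\ref{TH_max-on-extreme} to locate the maximum among extreme points $q_\sigma$, and the uniqueness clause of that theorem supplies the extremal function. You bypass the extreme-point theory entirely: after correctly identifying $\beta(G)=p\plw(\tau)$ (the term $(p_0)\plw(\tau)$ vanishes since $p_0$ is holomorphic at~$\tau$), you obtain the upper bound $\beta(G)\le 2\Re\ell_\zeta$ directly from Julia's lemma in the half-plane (Theorem~\ref{TH_Julia-half-plane}), via $p\plw(\tau)=2f'(\infty)\le 2\Re f(1)=2\Re p(0)$, and you verify surjectivity by an explicit one-parameter family of Herglotz functions. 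For the equality case you apply Lemma~\ref{LM_Julia} to peel off the pole at~$\tau$ and then observe that the remaining Herglotz factor $q$ must satisfy $\Re q(0)=0$, forcing it to be the trivial constant $i\Im\ell_\zeta$. Your argument is more elementary and self-contained, giving a hands-on proof of both sharpness and uniqueness without appealing to compactness or semicontinuity; the paper's version, by contrast, slots into the uniform extreme-point framework it sets up for all the value-region theorems in this section, which makes the bookkeeping lighter when several such results are proved at once.
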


\begin{proof}
Following the proof of Theorem~\ref{TH_value-region-interior}, we see that the range of the functional $G\mapsto G(0)$ on~$\GenDW(F,\Lambda)\setminus\{G\equiv0\}$ coincides with $Z\setminus\{0\}$ and moreover, if $\zeta\in\partial Z\setminus\{0\}$, then there exists exactly one $G\in\GenDW(F,\Lambda)\setminus\{G\equiv0\}$ with $G(0)=\zeta$, namely, $G=G_\zeta$, see~\eqref{EQ_G-zeta}. Therefore, in case $G(0)\in\partial Z\setminus\{0\}$ we simply have~$\beta(G)=0$.

Suppose now that $G(0)=:\zeta\in\interior Z$. Again, as in the proof of Theorem~\ref{TH_value-region-interior}, we can write
$$
G(z)=\frac{(z-\tau)(1-\overline\tau z)}{p(z)+p_0(z;F,\Lambda)},\quad p(z):=q(z)\Re\ell_\zeta+i\Im\ell_\zeta,
$$
where $q$ is an arbitrary function from the Carath\'eondory class~$\Cara$. Therefore, the problem to find the range of~$G\mapsto \beta(G)$ among all $G\in\GenDW(F,\Lambda)\setminus\{G\equiv0\}$ with ${G(0)=\zeta}$ is now reduced to finding the maximum of $q\plw(\tau)$. The linear functional $q\mapsto q\plw(\tau)$ satisfies the hypothesis of Theorem~\ref{TH_max-on-extreme} with~$X:=\Hol(\UD)$ and $K:=\Cara$. Taking into account Remark~\ref{RM_Caratheodory-class}, we conclude that it is sufficient to consider the functions of the form~$q_\sigma(z):={(\sigma+z)/(\sigma-z)}$, where ${\sigma\in\UD}$. For any $\sigma\neq\tau$, we have $q_\sigma\plw(\tau)=0$, which is clearly the minimal value of the functional on~$\Cara$, while for $\sigma=\tau$, it attains its maximal value ${q_\tau\plw(\tau)=2}$. Therefore, the range of $q\mapsto q\plw(\tau)$ over~$\Cara$ is~$[0,2]$, which immediately implies the assertion concerning the value region~$W_\tau(F,\Lambda)$.

Moreover, among the extreme points of~$\Cara$, there exists only one function~$q=q_\tau$ for which  $q\plw(\tau)$ attains its maximal value. It follows that the maximum over the whole class~$\Cara$ is attained only for~$q_\tau$, see again Theorem~\ref{TH_max-on-extreme}, from which we immediately obtain the remaining part of the theorem.
\end{proof}


\subsection{Extreme points of $\GenDW(F,\Lambda)$}
Another method to obtain results similar to theorems given in Sect.\,\ref{SS_valueReg-ellipt} and~\ref{SS_valueReg-boundary} is based on looking for the extreme points of~$\GenDW(F,\Lambda)$.
\begin{theorem}\label{TH_extereme-points-bis}
Let $\tau\in\overline\UD\setminus F$. Every extreme point $G\not\equiv0$ of the class $\GenDW(F,\Lambda)$ is of the form
\begin{equation}\label{EQ_extreme-point-GenDW}
G(z)=\frac{(\tau-z)(1-\overline\tau z)}{\displaystyle ib\,+\,\sum_{j=1}^{n-1} a_j\frac{s_j+z}{s_j-z}\,+\,p_0(z;F,\Lambda)},\quad z\in\UD,
\end{equation}
where $s_1,\ldots,s_{n-1}\in\UC$, $a_1,\ldots ,a_{n-1}\ge0$, $b\in\Real$, and $p_0$ is defined by~\eqref{EQ_p0-alpha_k}.
Some of the points $s_j$'s may belong to~$F$.
\end{theorem}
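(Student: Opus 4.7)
The plan is to use the representation formula of Theorem~\ref{TH_representation-formula}\,(A) together with a convexity argument based on Winkler-type extreme-point theorems. First I would observe that every $G\in\GenDW(F,\Lambda)\setminus\{G\equiv0\}$ can be written as $G=(\tau-z)(1-\overline\tau z)\,r$, where $r:=1/(p+p_0)$ and $p$ is the Herglotz function from Theorem~\ref{TH_representation-formula}. Since $p_0$ is a non-trivial Herglotz function, $p+p_0$ has strictly positive real part in~$\UD$, so $r$ is itself a Herglotz function. Crucially, the map $r\mapsto(\tau-z)(1-\overline\tau z)\,r$ is \emph{linear}, so convex combinations of $G$'s correspond bijectively to convex combinations of $r$'s, and extreme points of $\GenDW(F,\Lambda)$ correspond to extreme points of the convex class $\mathcal R\cup\{0\}$, where $\mathcal R$ consists of Herglotz functions $r$ for which $1/r-p_0$ is again Herglotz.

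Next I would translate the defining property of $\mathcal R$ into conditions on the Herglotz measure $\nu$ of $r$ and on $\gamma_r:=\Im r(0)$. Since $p_0$ has Herglotz measure $\mu_0=\sum_{k=1}^n\alpha_k\delta_{\sigma_k}$, the condition ``$1/r-p_0$ is Herglotz'' is equivalent to $\mu_h\geq\mu_0$, where $\mu_h$ denotes the Herglotz measure of $h:=1/r$. Using the identity $\mu_h(\{\sigma_k\})=h\plw(\sigma_k)/2=1/(2r\zrw(\sigma_k))$ together with Lemmas~\ref{LM_Sarason} and~\ref{LM_contact-point}, this system becomes, for each $k=1,\dots,n$,
\[ \gamma_r+\int_0^{2\pi}\cot(\theta/2)\,d\nu(\sigma_ke^{i\theta})=0 \quad\text{and}\quad \int_{\UC}\frac{d\nu(\varsigma)}{|\varsigma-\sigma_k|^2}\leq\frac{1}{4\alpha_k}. \]
In particular, $\nu$ cannot have an atom at any point of $F$.

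I would then apply Theorem~\ref{TH_W-extr} (adapted to accommodate the scalar parameter $\gamma_r$ and the lack of normalization of $\nu$, for instance by rescaling $\nu$ to a probability measure and encoding $\gamma_r\in\R$ via an auxiliary point) to conclude that, for an extreme $(\nu,\gamma_r)$ with $\nu\neq 0$, the measure $\nu$ must be finitely atomic, say $\nu=\sum_{j=1}^Nc_j\delta_{w_j}$ with $c_j>0$ and $w_j\in\UC\setminus F$. A standard dimension count in the polytope cut out by the $2n$ linear conditions above (namely, $n$ inequalities and $n$ equalities on $(c_1,\dots,c_N,\gamma_r)\in\R^{N+1}$) shows that at a vertex the number of tight inequalities equals $N+1-n$. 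The crucial observation, which follows from the identity $\mu_h=\mu_q=\mu'+\mu_0$ (with $\mu'$ the Herglotz measure of $p$ and $q:=p+p_0=1/r$), is that the $k$-th inequality is tight precisely when $\mu'(\{\sigma_k\})=0$, i.e.\ when $\sigma_k\notin\mathrm{supp}(\mu')$. Writing $\mathrm{ov}:=|\mathrm{supp}(\mu')\cap F|$, this gives $N=2n-1-\mathrm{ov}$.

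On the other hand, $q=1/r$ is a rational Herglotz function whose numerator and denominator share the common degree $|\mathrm{supp}(\mu_q)|=m+n-\mathrm{ov}$, where $m:=|\mathrm{supp}(\mu')|$; since the zeros of $q$ (all on $\UC$ and simple by the reflection principle combined with the Herglotz property of $r$) coincide with the poles of $r$, i.e.\ with the atoms of $\nu$, one also has $N=m+n-\mathrm{ov}$. Combining the two expressions for $N$ yields $m=n-1$, so the Herglotz measure $\mu'$ of $p$ has exactly $n-1$ atoms (some possibly located at points of $F$ when $\mathrm{ov}>0$), giving $p$ the form claimed in the theorem. The main obstacle is the rigorous adaptation of Theorem~\ref{TH_W-extr} to the present parameter-augmented, unnormalized setting; a subsidiary technicality is the simplicity of the zeros of the rational function~$q$ on $\UC$, needed to equate the degree count with the atom count of $\nu$.
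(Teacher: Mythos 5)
Your overall strategy mirrors the paper's: represent $G$ via a Herglotz function, translate the defining conditions of $\GenDW(F,\Lambda)$ into moment constraints on its Herglotz measure, invoke a Winkler-type extreme-point theorem to conclude the measure is finitely atomic, and then run a dimension count to bound the number of atoms of the Herglotz measure $\mu'$ of $p=1/r-p_0$ by $n-1$. Where you diverge is in the dimension count itself. The paper constructs an explicit perturbation $p\mapsto p+t(ix_0+\sum_j x_jq_j)$ and argues that the associated homogeneous linear system must have only the trivial solution (else $G$ would not be extreme), so the number of unknowns $m+1$ cannot exceed the number of equations $2n-\nu$; your argument instead fixes the atoms of $\nu$, views the admissible $(c_1,\dots,c_N,\gamma_r)$ as a polytope in $\R^{N+1}$, and uses the standard fact that at a vertex the active constraints span, together with the identity $N=|\mathrm{supp}(\mu_q)|=m+n-\mathrm{ov}$ coming from the involution $q\mapsto1/q$ on rational Herglotz functions (this is precisely Lemma~\ref{LM_involution}, which you reprove implicitly when you assert simplicity of the zeros on $\UC$). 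These are essentially dual perspectives on the same count and both lead to $m\le n-1$.

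Two points need attention. First, a small but real slip: the vertex argument gives that the number of tight moment inequalities is \emph{at least} $N+1-n$, not \emph{equal to} it, so your displayed conclusion should read $N\le 2n-1-\mathrm{ov}$ rather than an equality. Since you then combine it with the equality $N=m+n-\mathrm{ov}$, the final bound $m\le n-1$ still follows, so this is harmless, but as written your equalities are not justified. Second, the gap you flag yourself is the more serious one: Theorem~\ref{TH_W-extr} is stated for Borel probability measures on $\UC$, whereas you work with an unnormalized measure $\nu$ and an unbounded auxiliary real parameter $\gamma_r$. The ``encode $\gamma_r$ via an auxiliary point'' idea is not convincing — $\gamma_r$ ranges over all of $\Real$ and the constraint functionals are affine in $\gamma_r$, so a compactification onto $\UC$ would introduce unbounded test functions and take you outside the scope of Theorem~\ref{TH_W-extr}. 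The paper sidesteps this cleanly: it writes $p_*=\alpha\int\frac{\varsigma+z}{\varsigma-z}\di\mu+i\beta$ with $\mu$ a \emph{probability} measure, observes that extremality of $G$ for \emph{fixed} $\alpha$ and $\beta$ already forces $\mu$ to be extreme in the set of probability measures subject to the moment constraints, and then applies Winkler's theorem verbatim. Your proposal would need to adopt this (or an equivalent) normalization to be rigorous; as it stands that step is a genuine gap.
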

In the proofs we will need the following lemma. Fix some~$m\in\mathbb{N}$ and consider the set $\Herg_m$ of all Herglotz functions of the form
\begin{equation}\label{EQ_Herg-m}
p(z)=ib\,+\,\sum_{j=1}^m a_j\frac{s_j+z}{s_j-z},\quad z\in\UD,
\end{equation}
where $b\in\Real$, $a_1,\ldots, a_m>0$ and $s_j,\ldots,s_m$ are pairwise distinct points on~$\UC$.
\begin{lemma}\label{LM_involution}
The map $p\mapsto 1/p$ is an involution of~$\Herg_m$ onto itself.
\end{lemma}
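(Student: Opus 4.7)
The plan is to prove the involution property by showing that for any $p \in \Herg_m$, the function $1/p$ is again a Herglotz function of the same rational structure, with exactly $m$ simple poles on $\UC$ at distinct points and positive residues (in the $\Herg_m$-sense). Once this is established, the involution property follows trivially from $1/(1/p) = p$.

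First, I would check that $p \in \Herg_m$ is a rational function of degree $m$ whose zeros lie on $\UC$. Computing $\Re\frac{s_j+z}{s_j-z} = (1-|z|^2)/|s_j-z|^2$ shows $\Re p > 0$ on $\UD$, so $p$ has no zeros in $\UD$. Since $p$ has $m$ simple poles on $\UC$ and a finite limit $p(\infty)=ib-\sum a_j$ at infinity, $p$ has exactly $m$ zeros in $\C$ counted with multiplicity, necessarily all on~$\UC$. To locate them explicitly, use the identity $\frac{s_j+z}{s_j-z} = -i\cot\bigl((\alpha_j-\theta)/2\bigr)$ for $z=e^{i\theta}$ and $s_j=e^{i\alpha_j}$, which shows $p(e^{i\theta}) \in i\R$ on each open arc $A_j$ between consecutive poles and gives
\[
\Im p(e^{i\theta}) = b - \sum_k a_k\cot\bigl((\alpha_k-\theta)/2\bigr).
\]
A direct differentiation shows this is strictly decreasing on each $A_j$, runs from $+\infty$ to $-\infty$, and hence has exactly one zero $t_j\in A_j$. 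This gives $m$ distinct simple zeros of $p$ on $\UC$.

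Next I would identify the structure of $1/p$. Standard theory gives $1/p$ is Herglotz; it is rational with simple poles precisely at $t_1,\ldots,t_m$ and a finite value at infinity. Applying the Riesz–Herglotz representation to $1/p$, I would argue that its Herglotz measure $\nu$ must be supported on $\{t_1,\ldots,t_m\}$, because on $\UC\setminus\{t_1,\ldots,t_m\}$ the boundary values of $1/p$ are purely imaginary (taking reciprocals of the purely imaginary values of $p$), so by the Riesz decomposition of the harmonic function $\Re(1/p)$ the measure $\nu$ has no absolutely continuous or singular continuous part off the $t_j$'s. Hence
\[
\frac1{p(z)} \;=\; ib' + \sum_{j=1}^{m} c_j\,\frac{t_j+z}{t_j-z}
\]
for some $b'\in\R$ and $c_j\ge 0$. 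Finally, each $c_j$ must be strictly positive: otherwise $1/p$ would extend holomorphically across $t_j$, contradicting $p(t_j)=0$. Thus $1/p\in\Herg_m$.

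The step likely to require the most care is the justification that the Herglotz measure $\nu$ of $1/p$ is concentrated on $\{t_1,\ldots,t_m\}$; the quickest way is to combine the vanishing of $\Re(1/p)$ on $\UC\setminus\{t_j\}$ with the uniqueness of the Riesz–Herglotz measure, noting that a positive Borel measure on $\UC$ whose Poisson integral vanishes on a set of full measure must itself be supported on its complement. The rest is bookkeeping, and the involution $p\mapsto 1/p\mapsto p$ is automatic.
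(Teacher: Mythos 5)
Your argument is correct, but it reaches the key conclusion by different means than the paper. The shared skeleton is the same: show that $p\in\Herg_m$ is rational of degree $m$ with $m$ simple zeros on $\UC$, then identify $1/p$ as an element of $\Herg_m$ whose poles are exactly those zeros. You locate the zeros by a degree count combined with the explicit cotangent monotonicity on each arc between consecutive poles; the paper gets the same conclusion more quickly from the sign of $\Re p$ (positive in $\UD$, negative in $\ComplexE\setminus\overline\UD$), which forces all $m$ zeros onto $\UC$ and makes them simple. The genuine divergence is the final step: the paper subtracts from $1/p$ the explicit combination $\sum_j\bigl(2p\zrw(\kappa_j)\bigr)^{-1}(\kappa_j+z)/(\kappa_j-z)$ built at the zeros $\kappa_j$ of $p$, observes that the difference is a rational function with no poles on the Riemann sphere whose real part vanishes on the circle off finitely many points, and concludes it is an imaginary constant --- an entirely elementary computation that in addition identifies the coefficients of $1/p$ as $1/\bigl(2p\zrw(\kappa_j)\bigr)$, in line with Remark~\ref{RM_p-pole-zero}. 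You instead apply the Riesz--Herglotz representation to $1/p$ and use boundary-behaviour facts (Fatou-type differentiation of measures) to force its Herglotz measure onto atoms at the zeros $t_j$, then exclude vanishing coefficients via the pole condition. This works, but be aware that the principle you cite at the end requires the radial limits of $\Re(1/p)$ to vanish at \emph{every} point of the open set $\UC\setminus\{t_1,\ldots,t_m\}$ (mere a.e.\ vanishing would only show the measure is singular); that pointwise vanishing does hold here because $1/p$ extends holomorphically across each complementary arc, so your route goes through, at the cost of noticeably heavier standard machinery than the paper's short rational-function argument.
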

\begin{proof}
Clearly, every function $p$ of the form~\eqref{EQ_Herg-m} is a rational function of degree~$m$ with all poles being simple and lying on~$\UC$. Moreover, ${\Re p(z)>0}$ for all ${z\in\UD}$, and ${\Re p(z)<0}$ for all~$z\in\ComplexE\setminus\overline\UD$. It follows that all zeros of~$p$ are simple and belong to~$\UC$. Denote them by $\kappa_1,\ldots,\kappa_m$.

For each $j=1,\ldots,m$, $p\zrw(\kappa_j)=-\kappa_jp'(\kappa_j)\in (0,+\infty)$ because $p$ is a non-trivial Herglotz function and it is holomorphic at~$\kappa_j$; see Remark~\ref{RM_p-pole-zero}. The rational function
$$
R(z):=\frac{1}{p(z)}\,-\,\sum_{j=1}^m\frac{1}{2p\zrw(\kappa_j)}\frac{\kappa_j+z}{\kappa_j-z}
$$
has no poles in~$\ComplexE$, and on $i\mathbb R\setminus\{\kappa_1,\ldots,\kappa_n\}$ its real part vanishes. Therefore, $R$ is an imaginary constant.

This shows that $1/p\in\Herg_m$ for any~$p\in\Herg_m$. The proof is complete.
\end{proof}

\begin{proof}[{\fontseries{bx}\selectfont Proof of Theorem \ref{TH_extereme-points-bis}}]
Let $G\in\GenDW$. By the Berkson\,--\,Porta and the Riesz\,--\,Herglotz representation representation formulas~\eqref{EQ_BP-formula},\,\eqref{EQ_HerglotzRepresentation},
\begin{equation}\label{EQ_BP-RszHgtz}
 G(z)=(\tau-z)(1-\overline\tau z)\Big(\alpha \int_{\UC}\frac{\sigma+z}{\sigma-z}\,\di\mu(\sigma)~+~i\beta\Big), \quad z\in\UD,
\end{equation}
with some $\alpha\ge0$, $\beta\in\Real$, and some Borel probability measure $\mu$ on~$\UC$.

Moreover, by Theorem~\ref{TH_BRFP-withPommerenke} and Lemmas~\ref{LM_Sarason} and \ref{LM_contact-point}, $G\in\GenDW(F,\Lambda)\setminus\{G\equiv0\}$ if and only if $\alpha>0$ and the measure~$\mu$ in~\eqref{EQ_BP-RszHgtz} satisfies
\begin{gather}\label{EQ_measure-cond1}
 \int_{\UC}|\varsigma-\sigma_k|^{-2}\,\di\mu(\varsigma)\,\le\,\alpha_k/(2\alpha),\quad k=1,\ldots, n,\\
 \intertext{where $\alpha_k$'s are defined in~\eqref{EQ_p0-alpha_k}, and}\label{EQ_measure-cond2}
 \int_0^{2\pi}\cot(\theta/2)\di\mu(\sigma_k e^{i\theta})\,+ \,\beta/\alpha\,=\,0, \quad k=1,\ldots, n.
\end{gather}

If $G\not\equiv0$ is an extreme point for~$\GenDW(F,\Lambda)$, then $\mu$ is an extreme point for the set of all Borel probability measures on~$\UC$ subject to conditions~\eqref{EQ_measure-cond1} and~\eqref{EQ_measure-cond2}.

Therefore, on the one hand, by Theorem~\ref{TH_W-extr},
$\mu$ is a linear combination of finite (in fact, at most $2n+1$) Dirac measures on~$\UC$,
and hence $G(z)=(\tau-z)(1-\overline\tau z)p(z)$, where $p\in\Herg_m$ for some~$m\in\Natural$, i.e.
$$
p(z)=i\beta\,+\,\sum_{j=1}^{m}v_jq_j(z),\quad q_j(z):=\frac{\kappa_j+z}{\kappa_j-z},\quad z\in\UD,
$$
with $v_1,\ldots,v_m>0$ and pairwise distinct $\kappa_1,\ldots,\kappa_m\in\UC$.

On the other hand, $G\in\GenDW(F,\Lambda)$ and hence, by Theorem~\ref{TH_representation-formula}\,(A), $1/p(z)=\tilde p(z)+p_0(z;F,\Lambda)$, $z\in\UD$, for some Herglotz function~$\tilde p$.

By Lemma~\ref{LM_involution}, $1/p\in\Herg_m$. Therefore, $m\ge n$ and $\tilde p$ is of the form
$$
\tilde p(z)=ib\,+\,\sum_{j=1}^{m'}a_j\frac{s_j+z}{s_j-z},\quad z\in\UD,
$$
where $m-n \le m'\le m$, $a_1,\ldots,a_{m'}>0$, and $s_1,\ldots,s_{m'}$ are pairwise distinct points on~$\UC$, of which exactly $\nu:=m'-(m-n)$ belong to~$F$.

It remains to show that $m'\le n-1$. To this end we notice that
$$
 G_t(z):=(\tau-z)(1-\overline\tau z)\Big(p(z)+t\Big(
ix_0\,+\,\sum_{j=1}^{m}x_jq_j(z)\Big)\Big),\quad z\in\UD,
$$
belongs to $\GenDW(F,\Lambda)$ for all $t\in\Real$ small enough provided that $(x_0,x_1,\ldots,x_m)\in\Real^{m+1}$ solves the linear system
\begin{align*}
x_0+\displaystyle \frac{1}{i}\sum_{j=1}^m q_j(\sigma_k)x_j&=0, & k=1,\ldots, n,\\[.75ex]
\frac{1}{\sigma_k}\sum_{j=1}^m q_j'(\sigma_k)x_j&=0, & k\in J,
\end{align*}
where $J$ consists of integers $k=1,\ldots,n$ such that $\sigma_k\not\in\{s_1,\ldots, s_{m'}\}$.

Taking into account that all the coefficients in the above homogeneous system are real, we see that it cannot have non-trivial solutions, because otherwise $G$ would not be an extreme point of~$\GenDW(F,\Lambda)$. It follows that the number of unknowns, which is equal to~$m+1$, cannot exceed the number of equations, which is~$2n-\nu=n+m-m'$.

Thus, ${m'\le n-1}$. To complete the proof, we mention that the case $m'<n-1$ is of course possible. Therefore, some  coefficients $a_j$ in representation~\eqref{EQ_extreme-point-GenDW} may vanish.
\end{proof}

Thanks to general results in the Krein\,--\,Milman Theory, see e.g. \cite[\S18.1.3]{Kadec}, Theorem~\ref{TH_extereme-points-bis} implies that $G\in\GenDW(F,\Lambda)$ if and only if it admits a representation in terms of a regular Borel measure~$\mu$ supported on the finite-dimensional family in~$\GenDW(F,\Lambda)$ defined by formula~\eqref{EQ_extreme-point-GenDW} and having total weight~$|\mu|\le1$. (It is possible that $|\mu|<1$,  because $G\equiv0$ is an extreme point of $\GenDW(F,\Lambda)$, but it does not belong to the aforementioned family.)
In particular, for~$n=1$ we recover a result of Goryainov and Kudryavtseva.
\begin{corollary}[\protect{\cite[Theorem~1]{Goryainov-Kudryavtseva}}]\label{CR_GorKudr}
Let $\lambda<0$, $\sigma\in\UC$, $\tau\in\overline{\UD}\setminus\{\sigma\}$. Then $G\in\GenDW(\sigma,\lambda)$ if and only if there exists a Borel probability measure $\mu$ on~$\UC$ such that
\begin{equation}\label{EQ_GK-representation}
G(z)=\frac{|\lambda|}{|\sigma-\tau|^2}(\tau-z)(1-\overline\tau z)(1-\overline{\sigma}z)\int_{\UC}\frac{1-\kappa}{1-\kappa \overline{\sigma}z}\di\mu(\kappa).\quad\text{for all~$z\in\UD$}.
\end{equation}
\end{corollary}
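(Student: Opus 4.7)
The plan is to specialize Theorem~\ref{TH_extereme-points-bis} to the case $n=1$ and then invoke the Krein--Milman theorem in its integral form. For $n=1$ the sum $\sum_{j=1}^{n-1}$ in~\eqref{EQ_extreme-point-GenDW} is empty, so every extreme point $G\not\equiv 0$ of $\GenDW(\sigma,\lambda)$ takes the form
\[
G_b(z)=\frac{(\tau-z)(1-\overline\tau z)}{ib+\alpha\,\dfrac{\sigma+z}{\sigma-z}},\qquad b\in\R,
\]
where $\alpha:=|\tau-\sigma|^2/(2|\lambda|)$. The first and main computational step is to reparametrize this one-real-parameter family by $\kappa\in\UC$ via the substitution $\kappa:=(ib-\alpha)/(ib+\alpha)$, which is a bijection between $\R$ and $\UC\setminus\{1\}$.

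Rewriting the substitution as $ib=\alpha(1+\kappa)/(1-\kappa)$, clearing fractions in the denominator of $G_b$ (multiplying numerator and denominator by $\sigma-z$), and then using the identity $\sigma-\kappa z=\sigma(1-\kappa\overline\sigma z)$ together with $\sigma-z=\sigma(1-\overline\sigma z)$, I would arrive at
\[
G_b(z)=\frac{|\lambda|}{|\sigma-\tau|^2}\,(\tau-z)(1-\overline\tau z)(1-\overline\sigma z)\,\frac{1-\kappa}{1-\kappa\overline\sigma z}.
\]
Extending this formula continuously to $\kappa=1$ returns $0$, matching the locally uniform limit of $G_b$ as $b\to\pm\infty$ and recovering the trivial extreme point $G\equiv 0$. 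Hence the map $\Phi\colon\UC\to\GenDW(\sigma,\lambda)$, $\kappa\mapsto G_\kappa$, is continuous and its image contains every extreme point of $\GenDW(\sigma,\lambda)$.

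With this parametrization at hand, I would define the continuous affine map
\[
\Psi\colon P\to\Hol(\UD),\qquad \Psi(\mu)(z):=\int_{\UC} G_\kappa(z)\,\di\mu(\kappa),
\]
where $P$ denotes the compact (in the weak-$*$ topology) space of Borel probability measures on $\UC$. By Corollary~\ref{CR_compact}, $\GenDW(\sigma,\lambda)$ is convex and closed in $\Hol(\UD)$; since $\Psi(\delta_\kappa)=G_\kappa\in\GenDW(\sigma,\lambda)$ for every $\kappa\in\UC$, a standard weak-$*$ approximation of $\mu$ by finitely-supported measures combined with closedness of $\GenDW(\sigma,\lambda)$ yields $\Psi(P)\subseteq\GenDW(\sigma,\lambda)$. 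Conversely, $\Psi(P)$ is compact, convex, and contains all extreme points of $\GenDW(\sigma,\lambda)$, so Theorem~\ref{TH_KreinMilman} forces $\Psi(P)=\GenDW(\sigma,\lambda)$. This is precisely the asserted representation~\eqref{EQ_GK-representation}.

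The main obstacle is the algebraic simplification transforming $G_b$ into the $\kappa$-form; it is routine but demands care with signs and with matching the exact constant $|\lambda|/|\sigma-\tau|^2$. A minor additional subtlety is ensuring the parameter $\kappa$ ranges over all of $\UC$: one checks that the trivial extreme point $G\equiv 0$, missing from the $b$-parametrization, is recovered at $\kappa=1$ where the factor $1-\kappa$ annihilates the integrand, so the continuous surjection $\Phi\colon\UC\to\extr\GenDW(\sigma,\lambda)$ is well defined on the full circle.
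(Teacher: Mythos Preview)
Your proof is correct and follows essentially the same route as the paper: specialize Theorem~\ref{TH_extereme-points-bis} to $n=1$, use the identical substitution $\kappa=(ib-\alpha)/(ib+\alpha)$ (the paper writes it as $\kappa=(iy-1)/(iy+1)$ with $y=b/\alpha$), and then invoke Krein--Milman---the paper cites the integral form directly, while you spell out the two inclusions $\Psi(P)\subseteq\GenDW(\sigma,\lambda)$ and $\GenDW(\sigma,\lambda)\subseteq\Psi(P)$ by hand, which is equivalent. One small imprecision: in your final sentence you call $\Phi$ a surjection onto $\extr\GenDW(\sigma,\lambda)$, but Theorem~\ref{TH_extereme-points-bis} only gives $\extr\GenDW(\sigma,\lambda)\subseteq\Phi(\UC)$, not equality; fortunately your actual argument only uses this inclusion, so nothing breaks.
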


\begin{proof}
By Theorem~\ref{TH_extereme-points-bis} for the case $n=1$, $F:=\{\sigma\}$, $\Lambda:=\{\lambda\}$, the extreme points of~$\GenDW(\sigma,\lambda)$ other than identical zero are of the form $$G_b(z)={(\tau-z)(1-\overline\tau z)\big(ib+p_0(z;\sigma,\lambda)\big)^{-1}},$$ where $b\in\Real$. According to the Krein\,--\,Milman Theorem in integral form, see e.g. \cite[\S18.3.1]{Kadec}, ${G\in\GenDW(\sigma,\lambda)}$ if and only if it can be represented as the integral of $\Real\cup\{\infty\}\ni b\mapsto G_{b}$, where $G_\infty(z):=0$ for all ${z\in\UD}$, against some Borel probability measure on~$\Real\cup\{\infty\}$.

To simplify expressions, we introduce a new parameter $\kappa:=(iy-1)/(iy+1)\in\UC$, where $y:=2b|\lambda|/|\sigma-\tau|^2$. Then
\begin{equation*}
G_{b(\kappa)}(z)=\frac{|\lambda|}{|\sigma-\tau|^2}(\tau-z)(1-\overline\tau z)(1-\overline{\sigma}z)\frac{1-\kappa}{1-\kappa \overline{\sigma}z},\quad
z\in\UD,
\end{equation*}
with $\kappa=1$ corresponding to~$G_\infty$. This immediately leads to~\eqref{EQ_GK-representation}.
\end{proof}

The next theorem refers to another family of infinitesimal generators with given boundary regular null-points. Let $F=\{\sigma_k\}_{k=1}^n$ be as above and $\tau\in\overline\UD\setminus F$. Denote by $\GenDW(F)$ the class of all infinitesimal generators $G\in\GenDW$ such that the corresponding one-parameter semigroup $(\phi_t^G)$ has BRFPs at $\sigma_k$'s with repelling spectral values $\lambda_k$ satisfying ${\sum_{k=1}^n|\lambda_k|\le1}$. As usual, we regard $G\equiv0$ to be an element of~$\GenDW(F)$.

\begin{remark}\label{RM_compact}
Arguing as in the proof of Corollary~\ref{CR_compact}, one can easily show that $\GenDW(F)$ is a compact convex set in~$\Hol(\UD)$ and that the class $\GenDWe(F)$ formed by all $G\in\GenDW(F)$ for which the equality $\sum_{k=1}^n|\lambda_k|=1$ holds is a convex dense subset of~$\GenDW(F)$.
\end{remark}

\begin{theorem}\label{TH_extereme-points-tre}
In the above notation, $G\not\equiv0$ is an extreme point of $\GenDW(F)$ if and only if it is of the form
\begin{equation}\label{EQ_extereme-points-tre}
G(z)=\frac{(\tau-z)(1-\overline\tau z)}{ib+p_0(z;F,L)},\quad z\in\UD,
\end{equation}
where $b\in\Real$ and $L:=\{\lambda_k\}_{k=1}^n\in(-\infty,0)^n$ satisfies $\sum_{k=1}^n|\lambda_k|=1$.
\end{theorem}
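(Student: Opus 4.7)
My plan is to handle the two implications separately, working throughout with the Berkson--Porta factor $p_*:=G/[(\tau-z)(1-\overline\tau z)]$ and its Herglotz measure $\mu_*$. By Lemmas~\ref{LM_Sarason} and~\ref{LM_contact-point}, membership $G\in\GenDW(F)$ will translate into the admissibility conditions $\int g_k\,d\mu_*+\Im p_*(0)=0$ for each $k$ (with $g_k(\varsigma):=\cot(\arg(\overline\sigma_k\varsigma)/2)$), together with $\int\phi\,d\mu_*\le 1$ for $\phi(\varsigma):=2\sum_k|\tau-\sigma_k|^2/|\varsigma-\sigma_k|^2$; the latter integral equals $\sum_k|\lambda_k(G)|$.

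For \emph{sufficiency}, suppose $G$ has the form~\eqref{EQ_extereme-points-tre}. Since $ib+p_0(\cdot;F,L)\in\Herg_n$, Lemma~\ref{LM_involution} gives $p_*\in\Herg_n$ with $\mu_*$ supported on the $n$ zeros $\kappa_1,\ldots,\kappa_n\in\UC\setminus F$ of that denominator. If $G=(G^++G^-)/2$ with $G^\pm\in\GenDW(F)$, the linearity of $\lambda_k$ in $G$ combined with $\sum_k|\lambda_k(G)|=1$ forces $G^\pm\in\GenDWe(F)$, and positivity of $\mu_*^\pm$ forces their supports into $\{\kappa_1,\ldots,\kappa_n\}$. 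Thus each $p_*^\pm\in\Herg_{m_\pm}$ with $m_\pm\le n$, but $p_*^\pm$ must have at least $n$ zeros on~$\UC$ (the BRFPs $\sigma_k$), forcing $m_\pm=n$. A second application of Lemma~\ref{LM_involution} to $1/p_*^\pm$ then shows $1/p_*^\pm\in\Herg_n$ has poles at the $\sigma_k$'s and zeros at the $\kappa_i$'s---exactly the configuration of $1/p_*$. The ratio $(1/p_*^\pm)/(1/p_*)$ is therefore a rational function with neither zeros nor poles, hence a constant; the Herglotz character constrains it to be a positive real, and $\sum_k|\lambda_k(G^\pm)|=1$ pins it down to $1$, giving $G^\pm=G$.

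For \emph{necessity}, let $G\not\equiv 0$ be extreme with actual spectral values $L:=(\lambda_1,\ldots,\lambda_n)$ and set $s:=\sum_k|\lambda_k|$. The decomposition $G=(1-s)\cdot 0+s\cdot(G/s)$ with both summands in $\GenDW(F)$ will force $s=1$, so $G\in\GenDWe(F,L)$; extremality in $\GenDW(F)$ transfers to $\GenDW(F,L)$, and Theorem~\ref{TH_extereme-points-bis} then yields
\[
G(z)=\frac{(\tau-z)(1-\overline\tau z)}{ib+\sum_{j=1}^{m}a_j\,\dfrac{s_j+z}{s_j-z}+p_0(z;F,L)},
\]
with $a_j>0$ and pairwise distinct $s_j\in\UC$, while Theorem~\ref{TH_representation-formula}(B) forces $s_j\notin F$. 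It remains to rule out $m\ge 1$. If $m\ge 1$, the denominator lies in $\Herg_{n+m}$, so $p_*\in\Herg_{n+m}$ and $\mu_*=\sum_{i=1}^{n+m}c_i\delta_{\kappa_i}$ for distinct $\kappa_i\in\UC\setminus(F\cup\{s_j\})$ with $c_i>0$. I would then consider perturbations $(\mu_*\pm\epsilon\delta,\,\beta\pm\epsilon\gamma)$ with $\delta:=\sum_i d_i\delta_{\kappa_i}$: preservation of admissibility and of $\int\phi\,d\mu_*=1$ imposes the $n+1$ linear conditions $\int g_k\,d\delta+\gamma=0$ $(k=1,\ldots,n)$ and $\int\phi\,d\delta=0$ on the $n+m+1\ge n+2$ real unknowns $(d_1,\ldots,d_{n+m},\gamma)$. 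A nontrivial solution exists and necessarily has $\delta\neq 0$ (since $\delta=0$ would force $\gamma=0$); for $\epsilon>0$ small, $\mu_*\pm\epsilon\delta\ge 0$, and the resulting $G^\pm\in\GenDWe(F)$ satisfy $G=(G^++G^-)/2$ with $G^+\neq G^-$ by Riesz--Herglotz uniqueness, contradicting extremality.

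The hardest step will be the perturbation argument in the necessity part: I must verify that the perturbed $G^\pm$ genuinely lie in $\GenDW(F)$---in particular, that each $\sigma_k$ remains a \emph{regular} contact point of $p_*^\pm$. This regularity is automatic because $\mu_*^\pm$ stays discretely supported off~$F$ and Lemma~\ref{LM_Sarason} immediately gives $(p_*^\pm)\zrw(\sigma_k)<\infty$, but correctly bookkeeping the $n+1$ linear constraints against the $n+m+1$ unknowns---and confirming that the extra equation $\int\phi\,d\delta=0$ reflects precisely the conservation of the spectral sum---is where the care is required.
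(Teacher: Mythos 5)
Your proposal is correct. For the ``if'' direction (the only one the paper proves in detail) your argument is essentially the paper's: split $G=(G^++G^-)/2$, use linearity of the spectral values plus $\sum_k|\lambda_k(G)|=1$ to force $\sum_k|\lambda_k(G^\pm)|=1$, use positivity and uniqueness of the Herglotz measure to confine the supports of $\mu_*^\pm$ to the $n$ poles of $p_*$, then use the $n$ regular zeros at the $\sigma_k$'s together with Lemma~\ref{LM_involution} to conclude $p_*^\pm$ and $p_*$ are rational of degree $n$ with identical (simple) zeros and poles, hence positive multiples of each other, and finally that the multiple is $1$ --- this is exactly the chain of reasoning in the paper, merely phrased through measures and a quotient-of-rational-functions argument instead of the paper's ``same zeros and poles, imaginary on the rest of $\UC$'' formulation. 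For the ``only if'' direction the paper omits the proof, saying it follows by the same arguments as Theorem~\ref{TH_extereme-points-bis} (i.e.\ redoing the Winkler extreme-measure plus linear-counting argument for the constraint system of $\GenDW(F)$: $n$ contact equalities and one inequality on the spectral sum). You instead first force $\sum_k|\lambda_k|=1$ by the scaling decomposition $G=(1-s)\cdot0+s\cdot(G/s)$, transfer extremality to the subset $\GenDW(F,L)\subset\GenDW(F)$, invoke Theorem~\ref{TH_extereme-points-bis} as a black box (with Theorem~\ref{TH_representation-formula}(B) correctly excluding $s_j\in F$ because the $p$ in \eqref{EQ_representation-formula} is uniquely determined by $G$), and then kill the remaining $m\ge1$ terms by a perturbation of the $n+m$ atoms of $\mu_*$ and of $\Im p_*(0)$ subject to $n+1$ homogeneous linear conditions; the admissibility bookkeeping (regularity of the contact points via Lemma~\ref{LM_Sarason}, preservation of $\int\phi\,d\mu_*=1$) is handled correctly. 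This reuse of Theorem~\ref{TH_extereme-points-bis} buys you a shorter count ($n+m+1$ unknowns against $n+1$ equations) at the price of one extra perturbation step, whereas the paper's intended route repeats the full Winkler argument; both are valid, and your write-up actually supplies details the paper leaves out.
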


\begin{proof}
The fact that every extreme point of $\GenDW(F)$ different from the identical zero is of the form~\eqref{EQ_extereme-points-tre} can be established using essentially the same arguments as in the proof of Theorem~\ref{TH_extereme-points-bis}. By this reason, we omit the details.

To prove the converse, consider a function $G$ of the form~\eqref{EQ_extereme-points-tre}. We have to show that $G$ is an extreme point of~$\GenDW(F)$. Clearly, by Theorem~\ref{TH_representation-formula}\,(A), $G\in\GenDW(F)$.

Moreover, by Lemma~\ref{LM_involution}, $G(z)=(\tau-z)(1-\overline\tau z)p(z)$, where $p\in\Herg_n$ with $p(\sigma_k)=0$ for $k=1,\ldots,n$.
If $G$ is not an extreme point of $G\in\GenDW(F)$, then there exist Herglotz functions $p_1\neq p_2$ such that  $p=(p_1+p_2)/2$, with the infinitesimal generators $G_j(z):=(\tau-z)(1-\overline\tau z)p_j(z)$, $j=1,2$, belonging to~$\GenDW(F)$.

Denote $$\mathcal L(G):=\sum_{k=1}^n|\lambda_k(G)|,$$ where $\lambda_k(G)=-G'(\sigma_k)<0$ is  the spectral value of~$(\phi^G_t)$ at~$\sigma_k$. We have $2=2\mathcal L(G)={\mathcal L(G_1)+\mathcal L(G_2)}$, and $\mathcal L(G_j)\le1$ because $G_j\in\GenDW(F)$, $j=1,2$. It follows that $\mathcal L(G_1)=\mathcal L(G_2)=1$. In particular, $p_1$ and $p_2$ are non-trivial Herglotz functions.

On the one hand, since the Herglotz measure of~$p$ is a linear combination of $n$ Dirac measures on~$\UC$, the same holds for $p_1$ and $p_2$, i.e. $p_1,p_2\in\bigcup_{m=1}^n\Herg_m$ and every pole of $p_1$ or $p_2$ is also a pole of~$p$.

On the other hand, $p_j(\sigma_k)=0$, $j=1,2$, $k=1,\ldots,n$. In particular, $p_1$, $p_2$ are rational functions of degree at least~$n$. It follows that $p_1,p_2\in\Herg_n$.

Therefore, the rational functions $p$, $p_1$, $p_2$ have exactly the same zeros and poles, all of them simple. Moreover, at all other points on~$\UC$, these functions take purely imaginary values. Finally, their reals parts are positive in~$\UD$. It follows that $p_j=\gamma_jp$, $j=1,2$,  with some $\gamma_1,\gamma_2>0$. From $\mathcal L(G)=\mathcal L(G_1)=\mathcal L(G_2)=1$, we conclude that $\gamma_1=\gamma_2=1$ and hence $p=p_1=p_2$. This means that $G$ is indeed an extreme point of~$\GenDW(F)$.
\end{proof}

\section{Loewner-Kufarev-type ODE for self-maps with BRFPs}\label{S_param-and-CP-ineq}
In this section we combine our results with the theory developed
in~\cite{BRFPLoewTheory,Gumenyuk_parametric,Gumenyuk_parametric2} in order to develop a parametric representation of
univalent self-maps $\varphi\in\Hol(\UD,\UD)$ with given boundary regular fixed points based on a Loewner\,--\,Kufarev-type
ODE. Note that in this case, in contrast to the previous sections, we do not suppose that~$\varphi$ is an element of a
one-parameter semigroup. As an application of this parametric representation, we will give a new proof of the
Cowen\,--\,Pommerenke inequalities for univalent self-maps of the unit disk.

Denote by $\U$ the class of all univalent holomorphic mappings ${\varphi:\UD\to\UD}$ and let $\Ut\tau$, $\tau\in\overline\UD$,
be the subclass of~$\U$ formed by $\id_\UD$ and all ${\varphi\in\U\setminus\{\id_\UD\}}$ whose Denjoy\,--\,Wolff point coincides
with~$\tau$.
Furthermore, given a finite set $F\subset\UC$, consider the class $\UF[F]$ of all~${\varphi\in\U}$ satisfying the following
condition: every $\sigma\in F$ is a boundary regular fixed point of~$\varphi$. Let $\Ut\tau[F]:=\Ut\tau\cap\UF[F]$ for any
$\tau\in\overline\UD\setminus F$.

\subsection{Parametric representation}\label{S_param-representation}
This section is devoted to the proof of the following result. It makes use of an intrinsic version of Loewner Theory in the unit disk developed in~\cite{BCM1}. We refer the reader to that paper for the terminology and basic results.

\begin{theorem}\label{TH_param-repres}Let $\tau\in\overline\UD$ and $F:=\{\sigma_1,\ldots,\sigma_n\}\subset\UC\setminus\{\tau\}$, where $\sigma_k$'s are pairwise distinct. Then the following statements hold.\medskip\\
    \textrm{\bf (A)}  For any
    $\varphi\in\Ut\tau[F]\setminus\id_\UD$, there is a function $G:\UD\times[0,T]\to\Complex$, ${T:=\log\prod_{k=1}^n\varphi'(\sigma_k)}$, such that:
    \begin{itemize}
        \item[(i)]  for any~$z\in\UD$, $G(z,\cdot)$ is measurable on~$[0,T]$;
        \item[(ii)]  for a.e. $t\in[0,T]$, $G(\cdot,t)\in\GenDWe(F)$, {that is, for a.e. $t\in[0,T]$, $G(\cdot,t)$ is an infinitesimal generator such that the corresponding one-parameter semigroup has BRFPs at $\sigma_k$'s with repelling spectral values $\lambda_k$ satisfying ${\sum_{k=1}^n|\lambda_k|=1}$;}
        \item[(iii)] for any $z\in\UD$, $\varphi(z)=w_z(T)$, where $w=w_z(t)$ is the unique solution to
        \begin{equation}\label{EQ_param-repres}
        \frac{\di w}{\di t}=G\big(w(t),t\big),\quad t\in[0,T],\qquad w(0)=z.
        \end{equation}
    \end{itemize}
    \medskip
    \noindent{\bf (B)} Conversely, let $T>0$ and suppose that $G:\UD\times[0,T]\to\Complex$ satisfies~{\rm (i)} and~
\begin{itemize}
\item[(ii')] for a.e. $t\in[0,T]$, $G(\cdot,t)\in\GenDW(F),$  {that is, for a.e. $t\in[0,T]$, $G(\cdot,t)$ is an infinitesimal generator such that the corresponding one-parameter semigroup has BRFPs at $\sigma_k$'s with repelling spectral values $\lambda_k$ satisfying ${\sum_{k=1}^n|\lambda_k|\leq 1}$}.
\end{itemize}
Then for any~$z\in\UD$, the initial value problem~\eqref{EQ_param-repres} has a unique solution~$[0,T]\ni t\mapsto w=w_z(t)\in\UD$
and the maps~$\UD\ni z\mapsto \varphi_t(z):=w_z(t)$, $t\in[0,T]$, belong to~$\Ut\tau[F]$. Moreover, for each $k=1,\ldots,n$, $t\mapsto G'(\sigma_k,t)$ is an integrable function on~$[0,T]$ and
\begin{equation}\label{EQ-der}
    \log\varphi'_t(\sigma_k)=\int_0^{t}\!G'(\sigma_k,s)\,\di s\qquad\text{for
        all~$~t\in[0,T]$.}
    \end{equation}
\end{theorem}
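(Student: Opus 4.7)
My plan is to combine the intrinsic Loewner theory of~\cite{BCM1} with the BRFP-preserving evolution families studied in~\cite{BRFPLoewTheory,Gumenyuk_parametric,Gumenyuk_parametric2}. The key input is that any $\varphi\in\Ut\tau[F]\setminus\{\id_\UD\}$ embeds into an evolution family $(\varphi_{s,t})_{0\le s\le t\le 1}$ in $\Ut\tau[F]$, with $\varphi_{0,1}=\varphi$, whose associated Herglotz vector field $\widetilde G$ satisfies $\widetilde G(\cdot,t)\in\GenDW(F)$ for a.e.\ $t\in[0,1]$, and for which the two-parameter analogue of~\eqref{EQ-der} holds: $\log\varphi_{0,t}'(\sigma_k)=\int_0^t\widetilde G'(\sigma_k,s)\,\di s$.

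For Part~(A), with such an embedding in hand, I would reparameterize time so as to turn the inequality $\sum_{k=1}^n|\lambda_k(\widetilde G(\cdot,t))|\le 1$ into the equality required for $\GenDWe(F)$. Set $\Psi(t):=\sum_{k=1}^n\log\varphi_{0,t}'(\sigma_k)$. By the chain rule for angular derivatives, $\log\varphi_{0,t}'(\sigma_k)=\log\varphi_{0,s}'(\sigma_k)+\log\varphi_{s,t}'(\sigma_k)$, and since $\varphi_{s,t}'(\sigma_k)\ge 1$, each summand is non-decreasing in $t$. Hence $\Psi$ is continuous and non-decreasing with $\Psi(0)=0$ and $\Psi(1)=T$. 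Defining $G(z,u):=\widetilde G(z,\Psi^{-1}(u))/\Psi'(\Psi^{-1}(u))$ for a.e.\ $u\in[0,T]$, a direct computation using $\Psi(\Psi^{-1}(u))=u$ and~\eqref{EQ-der} applied to the embedding gives $\sum_{k=1}^n G'(\sigma_k,u)=1$ a.e., i.e.\ $G(\cdot,u)\in\GenDWe(F)$ a.e., and $\varphi$ is recovered as the time-$T$ map of~\eqref{EQ_param-repres}.

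For Part~(B), unique solvability of~\eqref{EQ_param-repres} for every initial datum $z\in\UD$, together with the univalence of the flow maps $\varphi_t:z\mapsto w_z(t)$ and the fact that~$\tau$ is their common Denjoy\,--\,Wolff point, follows from the standard theory of Herglotz vector fields since $G(\cdot,t)\in\GenDW\subset\Gen$ for a.e.\ $t$. The substantive content is to show that each $\sigma_k$ remains a BRFP of $\varphi_t$ and that~\eqref{EQ-der} holds. The representation formula~\eqref{EQ_representation-formula}, applied pointwise a.e.\ in~$t$, yields an upper bound of the form $G'(\sigma_k,t)\le|\lambda_k(G(\cdot,t))|$, which by the hypothesis $\sum_k|\lambda_k(G(\cdot,t))|\le 1$ is integrable in~$t$. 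Combined with the Julia inequality~\eqref{EQ-JuliasLemma}, this gives a uniform Stolz-angle estimate at $\sigma_k$ that persists under integration of the ODE, whence $\sigma_k$ is a BRFP of $\varphi_t$ and $\log\varphi_t'(\sigma_k)=\int_0^t G'(\sigma_k,s)\,\di s$ follows by interchanging the angular limit $z\to\sigma_k$ with the $t$-integral via a Fubini-type argument.

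The principal obstacle is precisely this last step: justifying the exchange of an angular limit with the time integral under the minimal a.e.\ assumption on~$G$. The technical device is the uniform Julia\,--\,Carath\'eodory domination provided by~\eqref{EQ_representation-formula} together with the a.e.\ bound $\sum_k|\lambda_k(G(\cdot,t))|\le 1$, which together allow dominated convergence to be applied along a Stolz angle at~$\sigma_k$; the integrability of $t\mapsto G'(\sigma_k,t)$ then both enables the argument and, a posteriori, implies the finiteness of $\log\varphi_t'(\sigma_k)$.
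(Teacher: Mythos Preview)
Your overall strategy matches the paper's: for~(A) embed $\varphi$ in an evolution family in $\Ut\tau[F]$ and reparameterize so that the normalized spectral-value sum becomes identically~$1$; for~(B) appeal to the general theory of Herglotz vector fields together with the BRFP-preservation results of~\cite{BRFPLoewTheory}. The differences are in how the two steps are executed.

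\textbf{Part (A).} You reparameterize the \emph{vector field} by writing $G(z,u)=\widetilde G(z,\Psi^{-1}(u))/\Psi'(\Psi^{-1}(u))$. The paper instead reparameterizes the \emph{evolution family}: from the key observation that $f(s)=f(t)$ forces $\varphi_{s,t}=\id_\UD$ (because $\varphi_{s,t}'(\sigma_k)\ge1$ with equality only for the identity), it defines $(\psi_{s,t})$ via $\varphi_{s,t}=\psi_{f(s),f(t)}$, then proves $(\psi_{s,t})$ is itself an evolution family by checking a Lipschitz criterion \cite[Theorem~4.2]{Gumenyuk_parametric}, and only then extracts the new Herglotz vector field. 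This sidesteps exactly the issue your formula exposes: $\Psi$ need not be strictly increasing, so $\Psi^{-1}$ is not a genuine function and the quotient is a priori $0/0$ on the image of the plateaus. Your route can be salvaged (the plateaus of $\Psi$ are precisely the intervals where $\widetilde G(\cdot,t)\equiv0$, hence harmless), but as written the step ``a direct computation using $\Psi(\Psi^{-1}(u))=u$'' is not justified, and you still owe the reader a verification that the resulting $G$ is a Herglotz vector field in the sense of~\cite{BCM1} so that the ODE has solutions.

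\textbf{Part (B).} There is a genuine gap in your first sentence: membership $G(\cdot,t)\in\Gen$ for a.e.~$t$ does \emph{not} by itself make $G$ a Herglotz vector field; one also needs the local boundedness axiom~(EF3) of~\cite{BCM1}. The paper supplies this via compactness of $\GenDW(F)$ in $\Hol(\UD)$ (Remark~\ref{RM_compact}), which gives a uniform bound $\max_K|G(\cdot,t)|\le M(K)$ independent of~$t$. Once that is in place, the paper does not carry out the Julia/Fubini argument you sketch: the BRFP preservation and formula~\eqref{EQ-der} are obtained directly by citing \cite[Theorem~1.1]{BRFPLoewTheory} (together with \cite[Theorems~4.4, 5.2 and Corollary~7.2]{BCM1}). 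Your dominated-convergence outline is a plausible reconstruction of what underlies that cited theorem, but it is both vaguer and more work than simply invoking it.
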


\begin{proof}
Let $\varphi\in\Ut\tau[F]\setminus\id_\UD$. According to~\cite[Theorem~2]{Gumenyuk_parametric2} there exists an evolution family~$(\varphi_{s,t})\subset\Ut\tau[F]$ such that $\varphi=\varphi_{0,1}$.

Using \cite[Theorem~1.1]{BRFPLoewTheory}, we see that $f(t):=\log\prod_{k=1}^n\varphi'_{0,t}(\sigma_k)$ is locally absolutely continuous on~$[0,+\infty)$. Moreover, $\varphi'_{s,t}(\sigma_k)\ge1$ whenever $t\ge s\ge0$ and $k=1,\ldots,n$, with the equality possible only if $\varphi_{s,t}=\id_\UD$. Taking into account that $\varphi_{0,t}=\varphi_{s,t}\circ\varphi_{0,s}$ and using the Chain Rule for angular derivatives, see e.g. \cite[Lemma~2]{CoDiPo06}, we conclude that ${f(t)\ge f(s)}$ whenever $t\ge s\ge0$ and that the equality is only possible if $\varphi_{s,t}=\id_\UD$. It follows that there exists a family $(\psi_{s,t})_{T\ge t\ge s\ge0}\subset\Ut\tau[F]$, where $T:=f(1)=\log\prod_{k=1}^n\varphi'(\sigma_k)>0$, such that $\varphi_{s,t}=\psi_{f(s),f(t)}$ for any $s,t\in[0,1]$ with $t\ge s$. By construction,
$$
\log\prod_{k=1}^n\psi'_{0,t}(\sigma_k)=t\quad\text{for any $t\in[0,T]$}.
$$

We extend the family $(\psi_{s,t})$ to all $s\ge0$ and $t\ge s$ by setting $\psi_{s,t}:=\psi_{s,T}$ if $t\ge T\ge s\ge0$ and $\psi_{s,t}:=\id_\UD$ if $t\ge s\ge T$.
We claim that $(\psi_{s,t})$ is an evolution family. Indeed, consider one of the points in~$F$, e.g.~$\sigma_1$. For any $s\ge 0$ and any~$t\ge s$, we have
$$
 0\,\le\,\log \psi'_{s,t}(\sigma_1)\,\le~ \log\prod_{k=1}^n\psi'_{s,t}(\sigma_k)~=~\log\prod_{k=1}^n\psi'_{0,t}(\sigma_k)~-~ \log\prod_{k=1}^n\psi'_{0,s}(\sigma_k)~\le\, t-s
$$
and $\log \psi'_{0,t}(\sigma_1)-\log \psi'_{0,s}(\sigma_1)=\log \psi'_{s,t}(\sigma_1)$. Therefore, $t\mapsto \log \psi'_{0,t}(\sigma_1)$ is Lipschitz continuous on~$[0,+\infty)$. Note also that $\tau$ is the DW-point for all $\psi_{s,t}$'s different from~$\id_\UD$. According to \cite[Theorem~4.2]{Gumenyuk_parametric}, it follow that $(\psi_{s,t})$ is indeed an evolution family.

Let $G$ be the Herglotz vector field associated with~$(\psi_{s,t})$. Recall that $(\psi_{s,t})\subset\Ut\tau[F]$.
Hence according to \cite[Theorem~1.1]{BRFPLoewTheory} and \cite[Theorem~6.7]{BCM1}, for a.e. $s\ge0$, $G(\cdot,s)$ is the infinitesimal generator of a one-parameter semigroup contained in $\Ut\tau[F]$ and moreover,
$$
\log\psi_{0,t}'(\sigma_k)=\int_{0}^tG'(\sigma_k,s)\di s,\quad t\ge0,\quad k=1,\ldots,n.
$$
Since by construction, $\log\prod_{k=1}^n\psi'_{0,t}(\sigma_k)=t$ for all $t\in[0,T]$,  it follows that
$$
\sum_{k=1}^n G'(\sigma_k,s)=1\quad\text{for a.e.~${s\in[0,T]}$}.
$$
This shows that $G(\cdot,s)\in\GenDWe(F)$ for a.e. $s\in[0,T]$ and hence the proof of~(A) is complete.

To prove~(B), it is sufficient to show that if $G$ satisfies (i) and (ii'), then being extended by $G(\cdot,t)\equiv0$ for all~$t>T$ it becomes a Herglotz vector field, see \cite[Definitions~4.1 and~4.3]{BCM1}. In such a case, the conclusion in~(B) would follow from \cite[Theorems~4.4 and~5.2]{BCM1}, \cite[Corollary~7.2]{BCM1},  and \cite[Theorem~1.1]{BRFPLoewTheory} combined with Theorem~\ref{TH_BRFP-withPommerenke}.

In turn, to see that (i) and~(ii') imply that~$G$ is a Herglotz vector field, it is sufficient to recall, see Remark~\ref{RM_compact}, that~$\GenDW(F)$ is a compact class and hence for any compact set $K\subset\UD$ there exists $M(K)>0$ such that ${\max_{z\in K} |F(z)|\le M(K)}$ holds for all ${F\in\GenDW(F)}$. The proof is now complete.
\end{proof}

\subsection{Inequalities of Cowen and Pommerenke}\label{S_CP-ineq}
In this section we apply our results to give another proof of an inequality due to Cowen and Pommerenke.

To state rigorously the Cowen\,--\,Pommerenke inequality for univalent self-maps with the interior DW-point, we need the following lemma. Consider the class $\Ut\tau[F]$, where $\tau\in\UD$ and $F:={\{\sigma_1,\sigma_2,\ldots,\sigma_n\}} {\subset\UC}$ consists of $n$ distinct points.
\begin{lemma}\label{LM_log} Let  $\varphi\in \Ut\tau[F]$, $\tau\in\UD$.
\begin{itemize}
\item[(A)]There exists a single-valued branch $\Psi[\varphi]:\UD\to\Complex$ of $\log\big((\varphi(z)-\tau)/(z-\tau)\big)$ such that the angular limit of $\Psi[\varphi]$ vanishes at~$\sigma_k$ for each~$k=1,\ldots,n$.
\item[(B)] Moreover, let $(\varphi_{t})_{t\in I}$ be a family in~$\Ut\tau[F]$ over an interval $I\subset\Real$  such that $I\ni t\mapsto\varphi_t(z)$ is continuous for any ${z\in\UD}$. Suppose also that $t\mapsto \varphi'(\sigma_{k_0})$ is locally bounded on~$I$ for some $k_0=1,\ldots,n$. Then $I\ni t\mapsto \Psi[\varphi_t]\in\Hol(\UD,\C)$ is continuous (in the open-compact topology).
\end{itemize}
\end{lemma}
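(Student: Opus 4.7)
For part~(A), I first observe that $h_\varphi(z):=(\varphi(z)-\tau)/(z-\tau)$ extends holomorphically to all of $\UD$ with $h_\varphi(\tau)=\varphi'(\tau)\neq 0$ (by univalence), and is zero-free on $\UD$: indeed, univalence rules out $\varphi(z)=\tau$ for $z\neq\tau$. Since $\UD$ is simply connected, single-valued branches of $\log h_\varphi$ exist. Because $\varphi$ has angular limit $\sigma_k$ at each $\sigma_k\in F$, one has $h_\varphi\to 1$ angularly at $\sigma_k$; consequently, any branch $\Psi$ of $\log h_\varphi$ has angular limit of the form $2\pi i m_k$ with $m_k\in\Z$. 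Since any two branches differ by a \emph{single} $2\pi i\Z$ constant, the issue is to show the $m_k$ (for one fixed branch) all coincide.

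My plan is to achieve this by continuous deformation from the identity. Apply Theorem~\ref{TH_param-repres}\,(A) to embed $\varphi=\varphi_{0,T}$ into a chain $(\varphi_{0,t})_{t\in[0,T]}\subset\Ut\tau[F]$ with $\varphi_{0,0}=\id$. The map $H(z,t):=h_{\varphi_{0,t}}(z)$ is jointly continuous and nowhere zero on the simply connected set $\UD\times[0,T]$, so it admits a continuous lift $\tilde\Psi\colon\UD\times[0,T]\to\C$ with $\exp\tilde\Psi=H$, uniquely determined by $\tilde\Psi(\cdot,0)\equiv 0$ (using $H(\cdot,0)\equiv 1$). I claim that $\Psi[\varphi]:=\tilde\Psi(\cdot,T)$ works. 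Fix $k\in\{1,\dots,n\}$: by Julia's Lemma~\eqref{EQ-JuliasLemma} at $\sigma_k$ one has $|\sigma_k-\varphi_{0,t}(r\sigma_k)|^{2}\le 2\varphi_{0,t}'(\sigma_k)(1-r)$, and by~\eqref{EQ-der} the function $t\mapsto\log\varphi_{0,t}'(\sigma_k)$ is absolutely continuous, hence bounded, on $[0,T]$. Therefore $H(r\sigma_k,t)\to 1$ \emph{uniformly} in $t\in[0,T]$ as $r\to 1^-$. For $r$ sufficiently close to $1$, $H(r\sigma_k,\cdot)$ maps $[0,T]$ into the principal-log disk $\{|w-1|<1\}$, so the continuous function $t\mapsto\tilde\Psi(r\sigma_k,t)-\mathrm{Log}\,H(r\sigma_k,t)$ is $2\pi i\Z$-valued on the connected interval $[0,T]$, hence constant, and pinned to $0$ by evaluation at $t=0$. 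Letting $r\to 1^-$ shows that $\tilde\Psi(\cdot,t)$ has angular limit $0$ at $\sigma_k$ for every $t\in[0,T]$, in particular for $t=T$.

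For part~(B) the strategy is analogous. Fix $t_0\in I$ and a compact interval $J\subset I$ containing $t_0$; continuity needs only be shown on $J$. Lift the jointly continuous nowhere-zero map $(z,t)\mapsto h_{\varphi_t}(z)$ over the simply connected $\UD\times J$ to a continuous $\tilde\Psi$, normalized by $\tilde\Psi(\tau,t_0):=\Psi[\varphi_{t_0}](\tau)$, which forces $\tilde\Psi(\cdot,t_0)=\Psi[\varphi_{t_0}]$. For every $t\in J$, the difference $\tilde\Psi(\cdot,t)-\Psi[\varphi_t]$ is a constant lying in $2\pi i\Z$. Repeating the Julia/uniformity argument at $\sigma_{k_0}$---now using the hypothesis that $t\mapsto\varphi_t'(\sigma_{k_0})$ is bounded on $J$---gives that $\tilde\Psi(\cdot,t)$ has angular limit $0$ at $\sigma_{k_0}$; since $\Psi[\varphi_t]$ has the same angular limit there by part~(A), the $2\pi i\Z$ constant vanishes and $\tilde\Psi(\cdot,t)=\Psi[\varphi_t]$ throughout $J$. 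Joint continuity of $\tilde\Psi$ on $\UD\times J$ then yields continuity of $t\mapsto\Psi[\varphi_t]$ in the compact-open topology.

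The main obstacle, shared by both parts, is the uniform-in-$t$ convergence $H(r\sigma_k,t)\to 1$ as $r\to 1^-$. Part~(A) requires it at \emph{every} $\sigma_k$, which is legitimate because $\varphi_{0,t}'(\sigma_k)$ is automatically bounded on $[0,T]$ thanks to the Loewner-chain regularity encoded in~\eqref{EQ-der}. Part~(B) only supplies uniformity at $\sigma_{k_0}$, but pinning the angular limit of $\tilde\Psi(\cdot,t)-\Psi[\varphi_t]$ at a \emph{single} boundary point already forces the residual $2\pi i\Z$ ambiguity to zero, so that hypothesis is sufficient. The subsidiary technical points (existence of the lift, the uniform-convergence bookkeeping, etc.) are routine once uniformity is in hand.
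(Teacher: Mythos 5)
Your argument is correct in substance, but it reaches part~(A) by a genuinely different route than the paper. The paper proves~(A) with an elementary, self-contained topological argument: it takes a $C^1$ arc $\Gamma$ joining two points of~$F$, orthogonal to~$\UC$, shows via conformality of~$\varphi$ in Stolz angles and univalence that $\varphi(\Gamma)$ is homotopic to~$\Gamma$ in $\C\setminus\{\tau\}$ relative to endpoints, and deduces $\int_\Gamma \Phi'/\Phi\,\di z=0$ for $\Phi=(\varphi-\tau)/(\,\cdot\,-\tau)$, so that the antiderivative of $\Phi'/\Phi$ has equal (hence zero) limits at all points of~$F$. You instead embed $\varphi$ into a Loewner chain via Theorem~\ref{TH_param-repres}, lift $(z,t)\mapsto h_{\varphi_{0,t}}(z)$ continuously over $\UD\times[0,T]$, and kill the monodromy by the uniform-in-$t$ convergence $h_{\varphi_{0,t}}(r\sigma_k)\to1$ coming from Julia's Lemma and \eqref{EQ-der}; this is legitimate and non-circular, since the proof of Theorem~\ref{TH_param-repres} does not use Lemma~\ref{LM_log}, but it imports the full parametric-representation machinery into what the paper treats as an elementary appendix fact, and it leaves a few verifications implicit (joint continuity of the flow, holomorphy of the lift in~$z$, and the upgrade from the radial limit~$0$ to the angular limit, which needs your opening observation that any branch has an angular limit in $2\pi i\Z$ on each Stolz angle). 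For part~(B) your ingredients coincide with the paper's — Julia's Lemma at~$\sigma_{k_0}$, local boundedness of $\varphi_t'(\sigma_{k_0})$, and the vanishing limit from~(A) to pin the $2\pi i\Z$ constant — only the packaging differs: you lift over $\UD\times J$ and identify $\tilde\Psi(\cdot,t)$ with $\Psi[\varphi_t]$, whereas the paper bounds $F_t=\Psi[\varphi_t]-\Psi[\varphi_{t_0}]$ at a fixed interior point $r_\varepsilon\sigma_{k_0}$ and then uses $\exp F_t\to1$ locally uniformly. What your approach buys is a single mechanism handling~(A) and~(B) uniformly in~$t$; what the paper's buys is independence from Loewner theory and a shorter, more elementary proof of~(A).
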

\vskip-.75ex\noindent The proof of this lemma is given in the Appendix. \vskip1ex

 Note that $\Psi[\varphi](\tau)$ is one of the values of $\log\varphi'(\tau)$, and this is the value that appears in the statement of the following theorem due to Cowen and Pommerenke.

\begin{theorem}[\protect{\cite[Theorem~7.1]{CowenPommerenke}}]\label{TH_C-P}
    Fix $\tau\in\UD$ {and an arbitrary finite sequence~$A:=(a_k)_{k=1}^n\subset(1,+\infty)$.} The value region
    $$U_\tau(F,A):=\{-\Psi[\varphi](\tau)\in\Complex\colon\varphi\in\Ut\tau[F],~\varphi'(\sigma_k)=a_k~\text{ for each}~k=1,\ldots,n\}$$
    is the closed disk $$\displaystyle
    D(A):=\big\{\omega\colon|\omega-r|\le r\big\},\quad r=r(A):=\Big(\sum\limits_{k=1}^n\dfrac1{\log a_k}\Big)^{-1},$$ with the
    point~${\omega=0}$ excluded. Each $\omega\in\partial U_\tau(F,A)\setminus\{0\}$ is delivered by a unique function~$\varphi_\omega$,
    which coincides with the element~$\phi^\omega_1$ of the one-parameter semigroup~$(\phi^\omega_t)$ associated with the
    infinitesimal generator
    $$G_\omega(z):=(\tau-z)(1-\overline\tau z)\left(
    i\gamma_\omega+\sum\limits_{k=1}^n\frac{|\tau-\sigma_k|^2}{2\log a_k}\,\frac{\sigma_k+z}{\sigma_k-z}\,\right)^{\!-1}\quad\text{for all~$z\in\UD$},$$
where $\gamma_\omega$ is a real constant depending on~$\omega$.
\end{theorem}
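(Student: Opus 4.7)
The plan is to combine the parametric representation of Theorem~\ref{TH_param-repres} with Corollary~\ref{CR_lambda}(A) via two Cauchy--Schwarz estimates in the time variable. Given $\varphi\in\Ut\tau[F]$ with $\varphi'(\sigma_k)=a_k$, Theorem~\ref{TH_param-repres}(A) produces a Herglotz vector field $G:\UD\times[0,T]\to\Complex$ with $G(\cdot,t)\in\GenDWe(F)$ a.e., $T=\sum_k\log a_k$, and $\int_0^T|\lambda_k(G(\cdot,t))|\,dt=\log a_k$, such that $\varphi=\varphi_T$ for the Loewner chain solving~\eqref{EQ_param-repres}. Differentiating the ODE in $z$ at the fixed point $\tau$ and using $G'(\tau,t)=-\lambda(G(\cdot,t))$ (which follows from the Berkson--Porta factorization $G(z,t)=(\tau-z)(1-\bar\tau z)p(z,t)$) one obtains $\partial_t\varphi_t'(\tau)=-\lambda(G(\cdot,t))\varphi_t'(\tau)$. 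Integration, together with Lemma~\ref{LM_log}(B) identifying the continuous branch of $\log\varphi_t'(\tau)$ starting at $0$ with $\Psi[\varphi_t](\tau)$, yields the key identity
\[
-\Psi[\varphi](\tau)\,=\,I\,:=\,\int_0^T\lambda(G(\cdot,t))\,dt.
\]

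For the inclusion $U_\tau(F,A)\subset D(A)\setminus\{0\}$, I would invoke Corollary~\ref{CR_lambda}(A): for a.e.~$t$, $\lambda(G(\cdot,t))$ lies in the disk $\{|\omega-r_t|\le r_t\}$ with $r_t:=\bigl(\sum_{k=1}^n|\lambda_k(G(\cdot,t))|^{-1}\bigr)^{-1}$, equivalently $|\lambda(G(\cdot,t))|^2\le 2r_t\,\Re\lambda(G(\cdot,t))$. A weighted complex Cauchy--Schwarz inequality then gives
\[
|I|^2\,\le\,\Bigl(\int_0^T\frac{|\lambda(G(\cdot,t))|^2}{r_t}\,dt\Bigr)\Bigl(\int_0^T r_t\,dt\Bigr)\,\le\,2\Re(I)\int_0^T r_t\,dt.
\]
To estimate $\int_0^T r_t\,dt$, set $u_k(t):=|\lambda_k(G(\cdot,t))|$ and apply a second, real, Cauchy--Schwarz inequality with positive weights $c_k$: $r_t\le(\sum_k u_k(t)c_k^2)/(\sum_k c_k)^2$; integration and the constraint $\int_0^T u_k\,dt=\log a_k$ give $\int_0^T r_t\,dt\le\sum_k c_k^2\log a_k/(\sum_k c_k)^2$, and the optimal choice $c_k=1/\log a_k$ produces exactly $\int_0^T r_t\,dt\le r(A)$. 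Combining, $|I|^2\le 2r(A)\Re I$, i.e., $|I-r(A)|\le r(A)$, so $I\in D(A)$. The origin is excluded because the real part of the denominator in the Berkson--Porta representation is at least $\Re p_0(\tau;F,\Lambda_t)>0$, forcing $\Re\lambda(G(\cdot,t))>0$ strictly and hence $\Re I>0$.

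For the reverse inclusion, I would construct extremals explicitly. For $\omega\in\partial D(A)\setminus\{0\}$, solve $\omega=(1-|\tau|^2)/[i\gamma_\omega+p_0(\tau;F,\Lambda)]$ for a unique $\gamma_\omega\in\R$, where $\Lambda_k:=-\log a_k$; as $\gamma_\omega$ ranges over $\R$ the right-hand side sweeps out $\partial D(A)\setminus\{0\}$, using $\Re p_0(\tau;F,\Lambda)=(1-|\tau|^2)/(2r(A))$. By Theorem~\ref{TH_representation-formula}(B) the function $G_\omega$ of the statement lies in $\GenDWe(F,\Lambda)$, and the semigroup element $\phi_1^{G_\omega}\in\Ut\tau[F]$ satisfies $(\phi_1^{G_\omega})'(\sigma_k)=e^{|\lambda_k|}=a_k$; applying Theorem~\ref{TH_param-repres}(B) to the time-constant field $G(\cdot,t):=G_\omega/T$ yields $-\Psi[\phi_1^{G_\omega}](\tau)=\lambda(G_\omega)=\omega$. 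An interior point $\omega=(1-s)\omega_1+s\omega_2$ is reached by concatenating the constant fields $G_{\omega_1}/T$ on $[0,(1-s)T]$ and $G_{\omega_2}/T$ on $[(1-s)T,T]$; this piecewise-constant field still satisfies $\int_0^T|\lambda_k|\,dt=\log a_k$, and $\int_0^T\lambda\,dt=(1-s)\omega_1+s\omega_2=\omega$.

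For uniqueness on the boundary, I would trace the equality cases through both Cauchy--Schwarz steps: complex-Cauchy--Schwarz equality forces $\lambda(G(\cdot,t))=c\cdot r_t$ for some constant $c\in\C$; equality in $|\lambda|^2\le 2r_t\Re\lambda$ forces $\lambda(G(\cdot,t))\in\partial\{|\omega-r_t|\le r_t\}$; and real-Cauchy--Schwarz equality with $c_k=1/\log a_k$ forces $u_k(t)=\log a_k/T$ to be constant in~$t$. These conditions together pin down $|\lambda_k(G(\cdot,t))|=\log a_k/T$ and $\lambda(G(\cdot,t))=\omega/T$ for a.e.~$t$; the uniqueness clause of Corollary~\ref{CR_lambda}(A) then forces $G(\cdot,t)=G_\omega/T$ almost everywhere, and the time-rescaling $s=t/T$ of the ODE gives $\varphi=\phi_1^{G_\omega}$. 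The main obstacle is the two-step Cauchy--Schwarz estimate producing exactly the sharp Cowen--Pommerenke constant $r(A)$ --- in particular, identifying the optimal weights $c_k=1/\log a_k$; once this calibration is in place, the surjectivity and extremal uniqueness follow from the structural results of Sect.~\ref{S_ValueRegions}.
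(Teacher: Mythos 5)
Your proposal is correct, and its analytic core is genuinely different from the paper's. Both arguments share the same skeleton: pass to the parametric representation of Theorem~\ref{TH_param-repres}, write $-\Psi[\varphi](\tau)=\int_0^T\lambda(G(\cdot,t))\,\di t$ and $\log a_k=\int_0^T|\lambda_k(G(\cdot,t))|\,\di t$ (with the branch identified via Lemma~\ref{LM_log}\,(B)), apply the pointwise disk bound of Corollary~\ref{CR_lambda}\,(A), and realize the extremals by autonomous fields. Where the paper integrates the half-plane form~\eqref{EQ_Re-est} of that bound and then uses concavity of the harmonic mean $Q$ with Jensen's inequality, plus strict concavity on the simplex for the uniqueness step --- precisely what Lemmas~\ref{LM_concave} and~\ref{LM_strict-concave} in the Appendix are for --- you encode the disk as $|\lambda|^2\le 2r_t\Re\lambda$ and run two Cauchy--Schwarz estimates, one complex in $t$ and one real over $k$ with the calibrated weights $c_k=1/\log a_k$; this bypasses the Appendix lemmas entirely and turns the equality analysis into a routine proportionality check, at the price of having to guess the right weights. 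Your surjectivity step (boundary points via purely imaginary constants, interior points via concatenation of two autonomous fields) is a mild variant of the paper's single autonomous family with $\Re c\ge0$, which covers the whole punctured disk at once. One point you should make explicit in the uniqueness part: equality in the real Cauchy--Schwarz step alone only yields $u_k(t)=\kappa(t)\log a_k$ for some $\kappa(t)>0$ with $\int_0^T\kappa(t)\,\di t=1$, not constancy of the $u_k$; it is the normalization $\sum_{k=1}^n u_k(t)=1$ built into $\GenDWe(F)$ (or, alternatively, a time-reparametrization argument for the autonomous field $G_\omega$, as the paper itself uses) that forces $\kappa\equiv 1/T$ and hence $G(\cdot,t)=G_\omega/T$ a.e., after which the identification $\varphi=\phi_1^{G_\omega}$ goes through exactly as you say. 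Since that normalization is part of your setup, this is a one-line fix rather than a gap.
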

\begin{proof}
    Let $\varphi\in\Ut\tau[F]$, with $\varphi'(\sigma_k)=a_k$ for each ${k=1,\ldots,n}$. Then thanks to
    Theorem~\ref{TH_param-repres},
    \begin{align}
    \label{EQ_CP-rep1}
    \log\varphi'(\tau)&=\int_0^{T}\!G'(\tau,t)\di t,\quad T:=\log\prod_{k=1}^n a_k,\\
    \label{EQ_CP-rep2}
    \log a_k=\log\varphi'(\sigma_k)&=\int_0^T\! G'(\sigma_k,t)\di t,\quad k=1,\ldots,n,
    \end{align}
where $G$ is measurable in~$t\in[0,T]$ and $G(\cdot,t)\in\GenDWe(F)$ for a.e.~$t\in[0,T]$.
Note that the value of~$\log\varphi'(\tau)$ given by~\eqref{EQ_CP-rep1} coincides with~$\Psi[\varphi](\tau)$, defined in Lemma~\ref{LM_log}\,(A), thanks to part~(B) of the same lemma.

Representing the disk $\{\omega:|\omega-r|\le r\}$ as the intersection of half-planes, by Corollary~\ref{CR_lambda}\,(A) we  have
\begin{equation}\label{EQ_Re-est}
 \Re\big(-e^{-i\theta}G'(\tau,t)\big)\le(1+\cos\theta)\,\Big(\sum_{k=1}^n\frac{1}{G'(\sigma_k,t)}\,\Big)^{-1}
\end{equation}
for every $\theta\in[0,2\pi]$ and a.e.~$t\in[0,T]$

Denote $Q(x_1,x_2,\ldots,x_n):=\big(\sum_{j=1}^nx_j^{-1}\big)^{-1}$. From
\eqref{EQ_CP-rep1}, \eqref{EQ_CP-rep2}, and~\eqref{EQ_Re-est} we obtain
\begin{align}\label{EQ_CP-main}
\hskip5em\displaystyle%
&\hskip-5em\Re\big(-e^{-i\theta}\log\varphi'(\tau)\big)~=~\displaystyle\int_0^T\!\Re\big(-e^{-i\theta}G'(\tau,t)\big)\di t~\notag\\[1ex]
&\le~\mathrlap{\displaystyle(1+\cos \theta)\int_0^T\! Q\big(G'(\sigma_1,t),G'(\sigma_2,t),\ldots,G'(\sigma_n,t)\big)\,\di t} %
   \notag\\[1ex]
&\le~\textstyle(1+\cos \theta)\,T Q\Big(\frac1T\!\int_0^T\!G'(\sigma_1,t)\di t, \frac1T\!\int_0^T\!G'(\sigma_2,t)\di t,\ldots,\frac1T\!\int_0^T\!G'(\sigma_n,t)\di t\Big)%
    \notag\\[.7ex]
&=~\textstyle(1+\cos \theta)\, Q\Big(\int_0^T\!G'(\sigma_1,t)\di t, \int_0^T\!G'(\sigma_2,t)\di t, \ldots,\int_0^T\!G'(\sigma_n,t)\di t\Big)\notag\\[.7ex]
&=~\displaystyle(1+\cos \theta)\,\Big(\sum\limits_{k=1}^n\dfrac1{\log a_k}\Big)^{-1},
\end{align}
where we have taken into account that~$Q$ is a concave function on~$(0,+\infty)^n$, see Lemma~\ref{LM_concave} in the Appendix, and used Jensen's inequality, see
e.g.~\cite[p.\,76]{Ferguson}.

Note also that~$\varphi\neq\id_\UD$ and hence $\log\varphi'(\tau)\neq0$. Inequality~\eqref{EQ_CP-main} along with the latter remark shows that $U_\tau(F,A)\subset D(A)\setminus\{0\}$.
To see that $D(A)\setminus\{0\}\subset U_\tau(F,A)$, we apply Theorem~\ref{TH_param-repres}\,(B) with the autonomous Herglotz vector field
\begin{equation}\label{EQ_CP-G}
 G(z,t):=\frac{(\tau-z)(1-\overline\tau z)}{\displaystyle c\,+\,\sum_{k=1}^n \frac{|\tau-\sigma_k|^2}{2|\lambda_k|}\,\frac{\sigma_k+z}{\sigma_k-z}},
 \quad z\in\UD,~ t\in[0,T],
\end{equation}
where $\lambda_k:=-\frac1T\log a_k$, $k=1,\ldots,n$, and $c\in\Complex$ is an arbitrary  constant with $\Re c\ge0$. By Theorem~\ref{TH_representation-formula}, $G(\cdot,t)\in\GenDWe(F)$ for all $t\in[0,T]$. In this way we construct  $\varphi=\varphi_T\in\Ut\tau[F]$ such that
$\varphi'(\sigma_k)=a_k$ for $k=1,\ldots,n$ and
$$
 -\log\varphi'(\tau)=\left(\tilde c+\sum_{k=1}^{n}\frac1{2\log a_k}\right)^{\!-1}\!\!\!,\quad \tilde c:=\frac1{1-|\tau|^2}\left(\frac{c}{T}+i\sum_{k=1}^n\frac{\Im(\overline\sigma_k\tau)}{\log a_k}\right)\!.
$$
 Therefore, any value of $-\log\varphi'(\tau)$ from $D(A)\setminus\{0\}$ can be achieved by choosing a suitable~$c$ with $\Re c\ge0$.

It remains to study the case when $-\log\varphi'(\tau)=:\omega\in\partial U_\tau(F,A)\setminus\{0\}$, which takes place if and only if equality occurs in~\eqref{EQ_CP-main} for~$\theta=2\arg \omega$. In particular,  we should have equality for a.e.~$t\in[0,T]$ in~\eqref{EQ_Re-est}. By Corollary~\ref{CR_lambda}\,(A), this is possible only when $G(\cdot,t)$ is one of the functions~\eqref{EQ_G-zeta}. Recall also that $G(\cdot,t)\in\GenDWe(F)$, $t\in[0,T]$. Hence, for all~$z\in\UD$ and a.e. $t\in[0,T]$, $G(z,t)$ is given by~\eqref{EQ_CP-G} with functions $\lambda_k=\lambda_k(t)=-G'(\sigma_k,t)$ satisfying $\sum_{k=1}^n\lambda_k(t)=-1$ and with a suitable purely imaginary constant $c$, determined uniquely by the values of~$\omega$ and $\lambda_k$'s.

Moreover, using~\eqref{EQ_CP-rep2} and taking into account
that according to Lemma~\ref{LM_strict-concave}, the function~$Q$ is strictly concave on the set
$$
    \big\{{(x_1,\ldots,x_n)\in\R^n\colon} {x_k>0,}~{k=1,\ldots,n,\,}~{\,x_1+x_2+\ldots+x_n=1}\big\},
$$
we see that equality in~\eqref{EQ_CP-main} is only possible if $\lambda_k$'s are constants, i.e. $\lambda_k(t)=-\frac1T\log a_k$ for a.e.~$t\in[0,T]$ and all~$k=1,\ldots,n$. As a result, upon rescaling time in ODE~\eqref{EQ_param-repres},  the infinitesimal generator~$G_\omega$ appears in the right-hand side, which allows us to conclude that $\varphi=\phi^\omega_1$.
\end{proof}

Let us now consider the case of the boundary DW-point~$\tau$.

\begin{theorem}[\protect{\cite[Theorem~6.1]{CowenPommerenke}}]\label{TH_C-P_boundary}
Fix $\tau\in\UC\setminus F$ {and $A:=(a_k)_{k=1}^n\subset(1,+\infty)^n$.} 
The value region
    $$U_\tau(F,A):=\{-\log\varphi'(\tau)\in\Real\colon\varphi\in\Ut\tau[F],~\varphi'(\sigma_k)=a_k~\text{\rm for each}~k=1,\ldots,n\}$$
is the interval $\big[0,r(A)\big]$, where $r(A)$ is defined as in Theorem~\ref{TH_C-P}.

The equality $-\log\varphi'(\tau)=r(A)$ is achieved for a unique mapping $\varphi$, which coincides with the element~$\phi_1$ of the one-parameter semigroup~$(\phi_t)$ associated with the infinitesimal generator
\begin{equation}\label{EQ_C-P_boundary-equality}
 G(z):=(\tau-z)(1-\overline\tau z)\left(\displaystyle i\gamma\,+\,\sum\limits_{k=1}^n\dfrac{|\tau-\sigma_k|^2}{2\log a_k}\,\dfrac{\sigma_k+z}{\sigma_k-z}\right)^{\!-1}\!\!,\quad \gamma:=-\sum\limits_{k=1}^n\dfrac{\Im(\overline\sigma_k\tau)}{\log a_k},
\end{equation}
for all $z\in\UD$.
\end{theorem}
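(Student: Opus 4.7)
The plan is to adapt the argument of Theorem~\ref{TH_C-P} to the boundary case, substituting Corollary~\ref{CR_lambda}(B) for its interior counterpart~(A). Fix $\varphi\in\Ut\tau[F]$ with $\varphi'(\sigma_k)=a_k$ for every $k$. By Theorem~\ref{TH_param-repres}(A) there exists a Herglotz vector field $G(\cdot,t)\in\GenDWe(F)$ on $[0,T]$, $T:=\log\prod_{k=1}^n a_k$, such that $\log a_k=\int_0^T G'(\sigma_k,t)\,\di t$ and, by the analogue of~\eqref{EQ-der} applied to the DW-point, also $-\log\varphi'(\tau)=\int_0^T\lambda\bigl(G(\cdot,t)\bigr)\,\di t$. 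Writing $\mu_k(t):=G'(\sigma_k,t)>0$, the defining condition of $\GenDWe(F)$ gives $\sum_{k=1}^n\mu_k(t)=1$ for a.e.~$t$, and Corollary~\ref{CR_lambda}(B) produces the pointwise estimate $\lambda(G(\cdot,t))\le Q(\mu_1(t),\ldots,\mu_n(t))$, where $Q(x_1,\ldots,x_n):=\bigl(\sum_{j=1}^n x_j^{-1}\bigr)^{-1}$.

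Integrating over $[0,T]$ and applying Jensen's inequality together with the concavity of $Q$ on $(0,+\infty)^n$ (Lemma~\ref{LM_concave}), I obtain
\begin{equation*}
-\log\varphi'(\tau)\le\int_0^T Q\bigl(\mu_1(t),\ldots,\mu_n(t)\bigr)\di t\le T\cdot Q\Bigl(\tfrac{\log a_1}{T},\ldots,\tfrac{\log a_n}{T}\Bigr)=Q(\log a_1,\ldots,\log a_n)=r(A),
\end{equation*}
establishing $U_\tau(F,A)\subset[0,r(A)]$. The reverse inclusion is handled by examining the autonomous family of infinitesimal generators
\begin{equation*}
G_c(z):=(\tau-z)(1-\overline\tau z)\Bigl(c+\sum_{k=1}^n\frac{|\tau-\sigma_k|^2}{2\log a_k}\frac{\sigma_k+z}{\sigma_k-z}\Bigr)^{-1},\qquad c\in\overline\UH,
\end{equation*}
and taking $\varphi_c$ to be the time-one map of the semigroup generated by $G_c$; a direct computation of the angular limit of $G_c(z)/(\tau-z)$ at $z=\tau$ shows that $-\log\varphi_c'(\tau)$ depends continuously on $c$ and sweeps the entire interval $[0,r(A)]$, with the maximal value $r(A)$ achieved exactly when $c=i\gamma$ is the purely imaginary constant that makes the denominator of $G_c$ vanish at $z=\tau$, which matches~\eqref{EQ_C-P_boundary-equality}.

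Uniqueness of the extremal $\varphi$ with $-\log\varphi'(\tau)=r(A)$ comes from chasing the equality cases. Equality in Jensen's inequality, combined with the strict concavity of $Q$ on the simplex $\{x\in(0,+\infty)^n\colon\sum_k x_k=1\}$ (Lemma~\ref{LM_strict-concave}), forces the marginals $\mu_k(t)$ to be constant in~$t$, so $\mu_k(t)\equiv\log a_k/T$ for a.e.~$t$. Equality in Corollary~\ref{CR_lambda}(B) at almost every~$t$, together with the uniqueness clause of that corollary, then pins down $G(\cdot,t)$ as the unique extremal generator attached to those marginals; after rescaling time this autonomous generator is identified with the one in~\eqref{EQ_C-P_boundary-equality}, and $\varphi$ is the time-one map of its semigroup. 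The step I expect to be most delicate is the transfer of the integral identity $-\log\varphi'(\tau)=\int_0^T\lambda(G(\cdot,t))\,\di t$ from the BRFPs $\sigma_k$, where~\eqref{EQ-der} supplies it directly, to the boundary DW-point~$\tau$: this should follow by appending~$\tau$ to the list of prescribed boundary regular fixed points and invoking the Herglotz-field machinery of \cite{BCM1} together with \cite[Theorem~1.1]{BRFPLoewTheory} at~$\tau$, using that the spectral value plays there the role of the repelling derivative at a BRFP.
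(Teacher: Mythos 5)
Your upper bound $-\log\varphi'(\tau)\le r(A)$ and your uniqueness analysis follow the paper's route: the pointwise use of Corollary~\ref{CR_lambda}\,(B), then Jensen's inequality with Lemmas~\ref{LM_concave} and~\ref{LM_strict-concave}, exactly as in Theorem~\ref{TH_C-P}. The transfer of the identity $-\log\varphi'(\tau)=\int_0^T\lambda\big(G(\cdot,t)\big)\,\di t$ to the boundary DW-point via \cite[Theorem~1.1]{BRFPLoewTheory} is also the right mechanism (you cannot literally ``append $\tau$ to $F$'', since the classes require $\tau\notin F$; you simply apply that theorem at the BRFP~$\tau$ of the evolution family, which the paper does implicitly). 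The genuine gap is in the reverse inclusion. For the autonomous generators $G_c$ with a \emph{constant} Herglotz term $c\in\overline\UH$, the quantity $-\log\varphi_c'(\tau)=\lambda(G_c)$ is neither continuous in~$c$ nor does it sweep $[0,r(A)]$: writing $p_0$ for the sum appearing in~\eqref{EQ_C-P_boundary-equality}, the function $p_0$ is holomorphic at~$\tau$ with $\Re p_0(\tau)=0$, so
$$
\lambda(G_c)=\anglim_{z\to\tau}\frac{1-\overline\tau z}{c+p_0(z)}=\frac{1-|\tau|^2}{c+p_0(\tau)}=0
$$
whenever $c+p_0(\tau)\neq0$, in particular for every $c$ with $\Re c>0$. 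Hence your family yields only the two endpoint values: $r(A)$ for the single choice $c=i\gamma$ making the denominator vanish at~$\tau$, and $0$ otherwise. This is precisely where the boundary case differs from the interior case of Theorem~\ref{TH_C-P}, where the dependence on~$c$ is continuous and fills the whole disk; carrying that intuition over is what breaks your argument here.

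The missing intermediate values are easy to supply, and the paper does it differently: let $G_1$ be the generator~\eqref{EQ_C-P_boundary-equality} and $G_0$ the same expression with $\gamma$ replaced by any real $\gamma'\neq\gamma$. Both have spectral value $-\log a_k$ at each~$\sigma_k$ (so their time-one maps fix the data $\varphi'(\sigma_k)=a_k$), while $\lambda(G_1)=r(A)$ and $\lambda(G_0)=0$. The convex combinations $sG_1+(1-s)G_0$, $s\in[0,1]$, are again infinitesimal generators with DW-point~$\tau$ (by Berkson\,--\,Porta, since the corresponding Herglotz factors add), their spectral values at the $\sigma_k$'s remain $-\log a_k$ because the defining angular limits are additive, and $\lambda\big(sG_1+(1-s)G_0\big)=s\,r(A)$; their time-one maps therefore deliver every value in $[0,r(A)]$. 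With this replacement of your sweeping argument, the proof matches the paper's.
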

\begin{proof}
The proof uses the same ideas as in case~$\tau\in\UD$. Therefore, we only indicate the main differences without repeating all the details.

Clearly, $-\log \varphi'(\tau)\ge 0$.
To see that $-\log\varphi'(\tau)\le r(A)$, we argue as in the proof of Theorem~\ref{TH_C-P}, except that instead of~\eqref{EQ_Re-est} we should use the inequality
$$
-G'(\tau,t)\le\Big(\sum_{k=1}^n\frac{1}{G'(\sigma_k,t)}\,\Big)^{-1}
$$
from Corollary~\ref{CR_lambda}\,(B).

To identify the function~$\varphi$ for which the equality $-\log\varphi'(\tau)= r(A)$ is achieved, we again argue as in the proof of Theorem~\ref{TH_C-P}, arriving thus to the conclusion that up to rescaling of time, for a.e. $t\in[0,T]$, $G(\cdot,t)$ should be of the form given in Corollary~\ref{CR_lambda}\,(A${}'$) with $\lambda_k:=-\frac1T\log a_k$ for $k=1,\ldots, n$.

An example of~$\varphi\in\Ut\tau[F]$ with $-\log\varphi'(\tau)=0$ and $\varphi'(\sigma_k)=a_k$ for $k=1,\ldots,n$ is easily obtained from the infinitesimal generator~\eqref{EQ_C-P_boundary-equality} if we take any other real value of~$\gamma$.

All the remaining values in~$\big(0,r(A)\big)$ are delivered, e.g., by the elements~$\phi_1$ of the one-parameter semigroups generated by convex combinations of the infinitesimal generators corresponding to the values $0$ and~$r(A)$.
\end{proof}

\section{Appendix}
For completeness, we give proofs of some elementary facts used in the paper.
\begin{proof}[{\fontseries{bx}\selectfont Proof of Lemma \ref{LM_log}}]
Let $\varphi\in \Ut\tau[F]$. Fix some $\sigma_j,\sigma_k\in F$, $j\neq k$. Choose any $C^1$-smooth Jordan arc~$\Gamma\subset\overline\UD\setminus\{\tau\}$ joining $\sigma_j$ with $\sigma_k$  and orthogonal to~$\UC$ at these points.

Note that $\widetilde\Gamma:=\varphi(\Gamma)$ satisfies the same requirements imposed on~$\Gamma$. Let us show that $\Gamma$ and $\widetilde\Gamma$ are homotopic relative to end-points in~$\C\setminus\{\tau\}$.
Denote by $D_1$ and $D_2$ the two connected components of~$\UD\setminus\Gamma$, with $\tau\in D_2$, and let $C_1$ and $C_2$ be the two complementary arc of~$\UC$ such that $C_j\subset\partial D_j$, $j=1,2$.
In particular, $\Gamma$ is homotopic in~$\C\setminus\{\tau\}$ relative to the end-points to~$C_1$.

Furthermore, let $\widetilde D_1$ and $\widetilde D_2$ stand for the two connected components of ${\UD\setminus\widetilde\Gamma}$, numbered in such a way that $C_j\subset\partial\widetilde D_j$, $j=1,2$.
Using the conformality at~$\sigma_j$ of~$\varphi$ restricted to a Stolz angle, we see that the $\varphi(D_1)$ intersects~$\widetilde D_1$, and  $\varphi(D_2)$ intersects~$\widetilde D_2$. Since $\varphi:\UD\to\UD$ is a homemorphism onto its image, it follows that $\varphi(D_j)\subset\widetilde D_j$, $j=1,2$. In particular, $\tau=\varphi(\tau)\in\varphi(D_2)\subset \widetilde D_2$ and hence $\tau\not\in\widetilde D_1$. It follows that relative to end-points $\widetilde\Gamma$ is homotopic in~$\C\setminus\{\tau\}$ to~$C_1$ and hence to~$\Gamma$.
Therefore, for $\Phi(z):=(\varphi(z)-\tau)/(z-\tau)$ we have
$$
 \int_\Gamma\frac{\Phi'(z)}{\Phi(z)}\di z=\int_{\widetilde\Gamma}\frac{\di w}{w-\tau}~-~\int_{\Gamma}\frac{\di z}{z-\tau}~=~0.
$$
All the above integrals exist because $\tau\not\in\Gamma\cup\widetilde\Gamma$   and because $\varphi$ is of class~$C^1$ on $\Gamma$ including the end-points.

The above argument is valid for any two distinct points $\sigma_j,\sigma_k\in F$. Taking into account that $\Phi$ is holomorphic and non-vanishing in~$\UD$, it follows that $\Phi'/\Phi$ admits an antiderivative in~$\UD$ that has vanishing
angular limits at every point of~$F$, and this is the desired single-valued branch of~$\log \Phi$. This proves part~(A).

To prove part~(B),
note that continuity of $t\mapsto\varphi_t(z)$ for each~$z\in\UD$ is equivalent to continuity of~$t\mapsto\varphi_t\in\Hol(\UD,\Complex)$ in the open-compact topology because all holomorphic self-maps of~$\UD$ form a normal family. Therefore, for any $t_0\in I$ the limit
$$
 \lim_{I\ni t\to t_0}\exp F_t,\quad \text{where~}F_t:=\Psi[\varphi_t]-\Psi[\varphi_{t_0}],
$$
exists in the open-compact topology and equals~$1$ identically in~$\UD$.
Moreover, note that
\begin{equation}\label{EQ_F_t-anglimzero}
  \lim_{r\to 1^-}  F_t(r\sigma_{k_0})=0\quad\text{for all~$t\in I$}.
\end{equation}

Since by the hypothesis, for some $\delta>0$, the function ${t\mapsto \varphi'_t(\sigma_{k_0})}$ is bounded on $I_\delta:=I\cap(t_0-\delta,t_0+\delta)$, using Julia's Lemma~\ref{EQ-JuliasLemma} we see that for any $\varepsilon>0$ there exists $r_\varepsilon\in(0,1)$ such that $\gamma_\varepsilon:=[r_\varepsilon\sigma_{k_0},\sigma_{k_0})$ and $\varphi_t(\gamma_\varepsilon)$ lie in the disk $D_\varepsilon:=\{z:|z-(1-\varepsilon)\sigma_{k_0}|\le\varepsilon\}\subset\UD$ for all~$t\in I_\delta$.

Clearly we can choose $\varepsilon>0$ small enough, so that  $\tau\not\in D_\varepsilon$ and
\begin{equation}\label{EQ_Deps}
\max_{z,w\in D_\varepsilon}\big|\log(z-\tau)-\log(w-\tau)\big|=:C<\pi
\end{equation}
for some (and hence any) choice of the single-valued branch of $\log(z-\tau)$ in~$D_\varepsilon$.

Combining \eqref{EQ_F_t-anglimzero} and \eqref{EQ_Deps}, we see that $|F_t(r_\varepsilon\sigma_{k_0})|\le 2C<2\pi$ for all $t\in I_\delta$. Recalling that $\exp F_t(z)\to1$ locally uniformly in~$\UD$ as $I\ni t\to t_0$, we conclude that $F_t\to 0$ locally uniformly in~$\UD$ as $I\ni t\to t_0$,
which completes the proof of~(B).
\end{proof}

{Next lemma shows that the harmonic mean is concave. We include its proof for the sake of completeness.}
\begin{lemma}\label{LM_concave}
For any $n\in\Natural$ the function $Q(x_1,\ldots x_n):=\Big(\sum_{j=1}^n x_j^{-1}\Big)^{-1}$ is concave on~$(0,+\infty)^n$.
\end{lemma}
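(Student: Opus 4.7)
The plan is to prove concavity by showing that the Hessian $\nabla^2 Q$ is negative semi-definite at every point of the open convex set $(0,+\infty)^n$, which is sufficient since $Q$ is smooth there.

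Setting $S(x) := \sum_j x_j^{-1}$ so that $Q = S^{-1}$, the chain rule first yields $\partial_i Q = Q^2/x_i^2$, and differentiating once more produces
$$\partial_i \partial_j Q \;=\; \frac{2 Q^3}{x_i^2 x_j^2} \;-\; \delta_{ij}\,\frac{2 Q^2}{x_i^3},$$
where $\delta_{ij}$ is the Kronecker delta. Consequently, for any $v \in \Real^n$,
$$v^{T} \nabla^2 Q(x)\, v \;=\; 2 Q^3 \left(\sum_i \frac{v_i}{x_i^2}\right)^{\!2} \;-\; 2 Q^2 \sum_i \frac{v_i^2}{x_i^3}.$$

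The key step is a weighted Cauchy--Schwarz inequality: splitting $x_i^{-2} = x_i^{-1/2}\cdot x_i^{-3/2}$ gives
$$\left(\sum_i \frac{v_i}{x_i^2}\right)^{\!2} \;=\; \left(\sum_i \frac{1}{\sqrt{x_i}}\cdot\frac{v_i}{x_i^{3/2}}\right)^{\!2} \;\leq\; \left(\sum_i \frac{1}{x_i}\right)\!\left(\sum_i \frac{v_i^2}{x_i^3}\right) \;=\; \frac{1}{Q}\sum_i \frac{v_i^2}{x_i^3}.$$
Substituting this bound into the expression above yields $v^{T} \nabla^2 Q(x)\, v \leq 0$, which establishes negative semi-definiteness of the Hessian and therefore the concavity of $Q$.

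The only nontrivial step is recognizing the correct factorization that makes Cauchy--Schwarz produce exactly the two sums appearing in the Hessian quadratic form; once the split $x_i^{-2} = x_i^{-1/2}\cdot x_i^{-3/2}$ is in hand, the inequality is forced and the remainder is routine calculus.
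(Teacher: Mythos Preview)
Your proof is correct. Both you and the paper begin by computing the Hessian of~$Q$ and aim to show it is negative semi-definite, but the final step differs. You evaluate the quadratic form $v^{\mathsf T}\nabla^2 Q(x)\,v$ directly and bound the cross term by Cauchy--Schwarz with the split $x_i^{-2}=x_i^{-1/2}\cdot x_i^{-3/2}$; the paper instead performs row operations to show that the full determinant of the Hessian (equivalently, of the matrix $B=[b_{jk}]$ with $b_{jk}=1-\delta_{jk}\,x_j/Q$) vanishes, then appeals to Sylvester's criterion via the principal minors. Your argument is shorter and more transparent, and it has the additional advantage that the equality case in Cauchy--Schwarz (namely $v$ proportional to $x$) immediately identifies the one-dimensional kernel of the Hessian---exactly the information underlying the companion Lemma on strict concavity along the simplex $\sum x_k=1$. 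The paper's determinant computation, on the other hand, makes the singularity of the full Hessian explicit but requires more bookkeeping.
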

\begin{proof}
The assertion of the lemma holds trivially for $n=1$. So we suppose that $n\ge2$.

The entries of the Hessian matrix $A(\mathbf x)=[a_{jk}(\mathbf x)]$, $\mathbf x:=(x_1,x_2,\ldots,x_n)$, for the function~$Q$ are given by
$$
 a_{jk}(\mathbf x)=\frac{2Q(\mathbf x)^3}{x_j^2x_k^2}b_{jk}(\mathbf x),\quad \text{where~}b_{jk}(\mathbf x):=1-\delta_{jk}\frac{x_j}{Q(\mathbf x)}~\text{~and~$\delta_{jk}$ is the Kronecker symbol.}
$$
First we show that $\det A(\mathbf x)=0$. Clearly, the latter is equivalent to $\det B(\mathbf x)=0$, where $B(\mathbf x):=[b_{jk}(\mathbf x)]$.

Subtract the last row of $B(\mathbf x)$ from each of the other rows. In the matrix we obtain, add to the last row the linear combination of all the other rows in which, for every $j=1,2,\ldots,n-1$, the coefficient of the $j$-th row is equal to $Q(\mathbf x)/x_j$. The resulting matrix is upper-triangular, with the last diagonal entry equal to
$$
 1-\frac{x_n}{Q(\mathbf x)}~+~\frac{x_n}{Q(\mathbf x)}\sum_{j=1}^{n-1}\frac{Q(\mathbf x)}{x_j}~=~1~-~\sum_{j=1}^n \frac{x_n}{x_j}~+~\sum_{j=1}^{n-1} \frac{x_n}{x_j}~=~0.
$$
Therefore, the determinant equals zero.

This argument, with an obvious modification, can be used to show that the determinants of all symmetric minors of~$A(\mathbf x)$, i.e. minors of the form $[a_{jk}(\mathbf x)]_{j,k\in J}$, $J\subset\{1,2,\ldots,n\}$, vanish, except for the symmetric minors of order one. They are simply diagonal entries of~$A$, which are all negative. Therefore, according to Sylvester's well-known criterion, the matrix~$A(\mathbf x)$ is negative semi-definite for any~$\mathbf x\in(0,+\infty)^n$, which was to be proved.
\end{proof}

\begin{lemma}\label{LM_strict-concave}
Let $Q$ be defined as in Lemma~\ref{LM_concave}. Let $\mathbf x,\mathbf y\in(0,+\infty)^n$, $\mathbf x\neq \mathbf y$.
If
\begin{equation}\label{EQ_not-strict}
\lambda Q(\mathbf x)+(1-\lambda)Q(\mathbf y)=Q\big(\lambda \mathbf x + (1-\lambda)\mathbf y\big)
\end{equation}
 for some~$\lambda\in(0,1)$, then $\mathbf x=\mu\mathbf y$ for some~$\mu\in\Real$.
\end{lemma}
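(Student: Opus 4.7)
The plan is to reduce the claim to the equality case of Cauchy\,--\,Schwarz. Set $\mathbf z(t):=t\mathbf x+(1-t)\mathbf y$ for $t\in[0,1]$; since $\mathbf x,\mathbf y\in(0,+\infty)^n$, the coordinates $z_j(t)$ stay strictly positive. Put also $\mathbf u:=\mathbf x-\mathbf y$ and $g(t):=Q(\mathbf z(t))$. By Lemma~\ref{LM_concave} the function $g$ is concave on $[0,1]$, while the hypothesis reads $g(\lambda)=\lambda g(1)+(1-\lambda)g(0)$. My first step would be the standard remark that a concave function on $[0,1]$ which meets its secant at one interior point must coincide with the secant throughout the interval; this turns the hypothesis into the stronger identity $g''\equiv 0$ on $(0,1)$.

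Next I would make this identity explicit. Writing $g=1/f$ with $f(t):=\sum_{j=1}^n 1/z_j(t)$, the vanishing of $(1/f)''$ is equivalent to $f(t)f''(t)=2f'(t)^2$. Differentiating term by term yields $f'(t)=-\sum_j u_j/z_j(t)^2$ and $f''(t)=2\sum_j u_j^2/z_j(t)^3$, and the identity $ff''=2(f')^2$ is precisely the equality case of Cauchy\,--\,Schwarz applied to the two real vectors $\bigl(u_j/z_j(t)^{3/2}\bigr)_{j=1}^n$ and $\bigl(1/z_j(t)^{1/2}\bigr)_{j=1}^n$.

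Since the second vector has strictly positive entries, equality in Cauchy\,--\,Schwarz amounts to the existence of a scalar $\alpha(t)\in\R$ such that $u_j=\alpha(t)\,z_j(t)$ for every $j$. Evaluating, for instance, at $t=0$ gives $\mathbf u=\alpha(0)\mathbf y$, whence $\mathbf x=\mathbf y+\mathbf u=\bigl(1+\alpha(0)\bigr)\mathbf y$; positivity of the coordinates of $\mathbf x$ and $\mathbf y$ then forces $1+\alpha(0)>0$, so the conclusion holds with $\mu:=1+\alpha(0)>0$.

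The only subtle point is the first step: one genuinely needs the affinity of $g$ on the entire interval $[0,1]$ rather than just at the single point $\lambda$. Equality in Cauchy\,--\,Schwarz at a single $t_0\in(0,1)$ would only give $\mathbf u\parallel\mathbf z(t_0)$, a direction which in general differs from that of $\mathbf x$; it is the validity of the proportionality for \emph{all} $t\in[0,1]$ that allows us to specialise to $t=0$ and deduce $\mathbf u\parallel\mathbf x$. Everything else is a routine calculation.
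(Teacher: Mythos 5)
Your proof is correct, but it runs on a genuinely different mechanism than the paper's. Both arguments start identically: concavity from Lemma~\ref{LM_concave} upgrades the single equality at $\lambda$ to affinity of $g(t):=Q\bigl(t\mathbf x+(1-t)\mathbf y\bigr)$ on all of $[0,1]$. The paper then exploits that $g$ is a rational function of $t$: being affine on an interval it is affine on all of $\Real$ (after removing removable singularities), it must vanish at every $t$ for which some coordinate of $t\mathbf x+(1-t)\mathbf y$ vanishes, and an affine function has at most one zero; hence all coordinates vanish at the same $t$, which is exactly $\mathbf x=\mu\mathbf y$. You instead differentiate twice: writing $z_j(t)$ for the coordinates of $t\mathbf x+(1-t)\mathbf y$, $u_j:=x_j-y_j$ and $f=\sum_j 1/z_j$, affinity gives $ff''=2(f')^2$, which is the equality case of Cauchy\,--\,Schwarz for $\bigl(u_j/z_j^{3/2}\bigr)_j$ and $\bigl(1/z_j^{1/2}\bigr)_j$ and forces $\mathbf u=\alpha(t)\,\mathbf z(t)$. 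Your route is the standard ``strict concavity of the harmonic mean off rays'' computation and avoids the analytic-continuation and zero-counting trick; the paper's route avoids derivatives altogether. Two small points: you establish $g''\equiv0$ only on $(0,1)$, so to specialise at $t=0$ you should either invoke continuity of $g''$ up to the endpoint (legitimate, since all $z_j>0$ on a neighbourhood of $[0,1]$) or simply work at an interior point; and, contrary to your closing caveat, equality at a single $t_0\in(0,1)$ already suffices, since $\mathbf x-\mathbf y=\alpha\bigl(t_0\mathbf x+(1-t_0)\mathbf y\bigr)$ rearranges to $(1-\alpha t_0)\,\mathbf x=\bigl(1+\alpha(1-t_0)\bigr)\mathbf y$, and the degenerate case $1-\alpha t_0=0$ is impossible because it would force $\bigl(1+\alpha(1-t_0)\bigr)\mathbf y=0$ with $1+\alpha(1-t_0)=1/t_0\neq0$ and $\mathbf y\neq0$. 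So the part you flagged as the only subtle point is in fact harmless either way.
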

\begin{proof}
Since by Lemma~\ref{LM_concave}, $Q$ is concave on~$(0,+\infty)$, equality~\eqref{EQ_not-strict} for some $\lambda\in(0,1)$ implies the same equality for all~$\lambda\in[0,1]$. Notice that the r.h.s. of~\eqref{EQ_not-strict}, $f(\lambda):=Q\big(\lambda \mathbf x + (1-\lambda)\mathbf y\big)$ is a rational function of~$\lambda$. Therefore, extending~$f$, as usual, to its removable singularities by continuity, we may conclude that $f(\lambda)=a\lambda+b$ for some $a,b\in\Real$ and all~$\lambda\in\Real$.
On the one hand, a function of this form has at most one zero. On the other hand, taking into account that for $\lambda=0$ all the components of the vector $\mathbf x_\lambda:=\lambda \mathbf x + (1-\lambda) \mathbf y$ are positive,  it is easy to see that $f(\lambda)=0$ for each $\lambda\in\Real$ such that at least one component of~$\mathbf x_\lambda$ vanishes.

All non-vanishing components of~${\Real\ni\lambda\mapsto\mathbf x_\lambda}$  are positive constants. Therefore, if such components exist, then $a=0$ and hence $f(\lambda)\equiv Q(\mathbf y)>0$. It follows that if at least one of the components is non-vanishing, then $f(\lambda)$ does not vanish, which means that, in fact, all the components of ${\Real\ni\lambda\mapsto\mathbf x_\lambda}$ are non-vanishing. This in turn would imply that $\mathbf x=\mathbf y$ in contradiction to the hypothesis.

Thus we may conclude that all the components of $\mathbf x_\lambda$ vanish for the same value of~$\lambda$ and the desired conclusion follows immediately.
\end{proof}

\end{document}